\definecolor{nicegreen}{RGB}{0,180,0}
\definecolor{purple}{RGB}{160, 0, 20}
\newtheorem{thm}{Theorem}[section]
\newtheorem*{thmnonum}{Theorem}
\newtheorem{cor}[thm]{Corollary}
\newtheorem{lemma}[thm]{Lemma}
\newtheorem{sublemma}[thm]{Sublemma}
\newtheorem{propn}[thm]{Proposition}
\theoremstyle{definition}
\newtheorem{defn}[thm]{Definition}
\newtheorem{remark}[thm]{Remark}
\theoremstyle{plain}
\newtheorem*{notation}{Notation}
\newcommand{\fpb}{\overline{\mathbb{F}}_p}
\newcommand{\kk}{\mathfrak{k}}
\newcommand*{\longhookrightarrow}{\ensuremath{\lhook\joinrel\relbar\joinrel\rightarrow}}
\newcommand*{\longtwoheadrightarrow}{\ensuremath{\relbar\joinrel\twoheadrightarrow}}
\newcommand{\sub}[2]{\genfrac{}{}{0pt}{}{#1}{#2}}
\newcommand{\ind}{\textnormal{ind}}
\newcommand{\Ind}{\textnormal{Ind}}
\newcommand{\End}{\textnormal{End}}
\newcommand{\Hom}{\textnormal{Hom}}
\newcommand{\Ord}{\textnormal{Ord}}
\newcommand{\s}{\textnormal{S}}
\newcommand{\St}{\textnormal{St}}
\newcommand{\chps}{\chi_\psi}
\newcommand{\pr}{\mathsf{pr}}
\newcommand{\cA}{{\mathcal{A}}}
\newcommand{\cC}{{\mathcal{C}}}
\newcommand{\cE}{{\mathcal{E}}}
\newcommand{\cH}{{\mathcal{H}}}
\newcommand{\cO}{{\mathcal{O}}}
\newcommand{\cS}{{\mathcal{S}}}
\newcommand{\bK}{{\mathbf{K}}}
\newcommand{\bbC}{{\mathbb{C}}}
\newcommand{\bbQ}{{\mathbb{Q}}}
\newcommand{\bbZ}{{\mathbb{Z}}}
\newcommand{\tB}{\widetilde{B}}
\newcommand{\tF}{\widetilde{F}}
\newcommand{\tG}{\widetilde{G}}
\newcommand{\tH}{\widetilde{H}}
\newcommand{\tK}{\widetilde{K}}
\newcommand{\tL}{\widetilde{L}}
\newcommand{\tM}{\widetilde{M}}
\newcommand{\tP}{\widetilde{P}}
\newcommand{\tQ}{\widetilde{Q}}
\newcommand{\tS}{\widetilde{S}}
\newcommand{\tT}{\widetilde{T}}
\newcommand{\tU}{\widetilde{U}}
\newcommand{\tV}{\widetilde{V}}
\begin{document}

\title{Irreducible admissible mod-$p$ representations of metaplectic groups}
\date{\today}
\author{Karol Kozio\l\ and Laura Peskin}
\address{Department of Mathematics, University of Toronto, 40 St. George Street, Toronto, ON M5S 2E4, Canada} \email{karol@math.toronto.edu}
\address{Department of Mathematics, Weizmann Institute of Science, 234 Herzl Street, Rehovot, 7610001, Israel}
\email{laura.peskin@weizmann.ac.il}

\begin{abstract}
Let $p$ be an odd prime number, and $F$ a nonarchimedean local field of residual characteristic $p$.  We classify the smooth, irreducible, admissible genuine mod-$p$ representations of the twofold metaplectic cover $\widetilde{\textnormal{Sp}}_{2n}(F)$ of $\textnormal{Sp}_{2n}(F)$ in terms of genuine supercuspidal (equivalently, supersingular) representations of Levi subgroups. To do so, we use results of Henniart--Vign\'{e}ras as well as new technical results to adapt Herzig's method to the metaplectic setting. As consequences, we obtain an irreducibility criterion for principal series representations generalizing the complete irreducibility of principal series representations in the rank 1 case, as well as the fact that irreducibility is preserved by parabolic induction from the cover of the Siegel Levi subgroup.  
\end{abstract}

\maketitle

\section{Introduction}

\subsection{Context}
Let $p$ be an odd prime number, and $F$ a nonarchimedean local field of residual characteristic $p$.  The irreducible admissible mod-$p$ representations of a connected reductive group over $F$ have recently been classified up to supercuspidals by Abe--Henniart--Herzig--Vign\'eras \cite{abehenniartherzigvigneras:irredmodp}, completing a line of research begun by Barthel--Livn\'e in the 1990s (\cite{barthellivne:irredmodp}, \cite{barthellivne:ordunram}).  Their classification is part of an ongoing effort to formulate a mod-$p$ local Langlands correspondence, which aims to generalize the existing 
correspondence, due to Breuil \cite{breuil:modpgl2i}, between certain two-dimensional mod-$p$ representations of $\text{Gal}(\overline{\bbQ}_p/\bbQ_p)$ and certain admissible mod-$p$ representations of $\textnormal{GL}_2(\bbQ_p)$. It is expected that such a correspondence would be compatible with a $p$-adic local Langlands correspondence; this is the case for $\textnormal{GL}_2(\bbQ_p)$, where the $p$-adic correspondence is due to Colmez, Kisin, Pa\v{s}k\={u}nas, and others (see \cite{colmez:gl2qpphigamma}, \cite{kisin:defs}, \cite{paskunas:colmezfunctor}; see also \cite{cdp:padicllc} for a recent treatment, and \cite{berger:padicllc} for an overview). Furthermore, it is expected that a mod-$p$ local Langlands correspondence would reflect information about mod-$p$ automorphic forms via generalizations of Serre's conjectures.

In the setting of $\bbC$-representations, recent advances have raised the possibility of incorporating covers of reductive algebraic groups into the local Langlands correspondences. So far, the results are most complete in the case of the metaplectic double cover of $\textnormal{Sp}_{2n}(F)$.  In this case, Gan--Savin \cite{gansavin:epsilon} produced a correspondence between $\bbC$-representations of the metaplectic double cover of $\textnormal{Sp}_{2n}(F)$ and certain Weil-Deligne representations by composing the theta correspondence (properties of which were established in the necessary generality in \emph{loc. cit.}) with the local Langlands correspondence for odd special orthogonal groups. More recently, work of Weissman (\cite{weissman:metaplecticlgps}, \cite{weissman:lgpsparams}), Gan--Gao \cite{gangao:langlandsweissman}, and others has laid groundwork for conjectural local Langlands correspondences of $\bbC$-representations for a general class of covering groups. In particular, Weissman has constructed a candidate for the $L$-group of a Brylinski--Deligne cover of a quasisplit algebraic group, whose specialization to Gan--Savin's setting explains some surprising features of the local Langlands correspondence for the metaplectic cover of $\textnormal{Sp}_{2n}(F)$ (see \cite{gangao:langlandsweissman} $\S$12). 

As the two aforementioned programs develop, it is natural to ask what role the modular representations of covering groups might play. In the mod-$\ell$ setting, for $\ell$ sufficiently large, M\'inguez \cite{minguez:typeiihowe} has shown the existence of a bijective theta correspondence for type II dual reductive pairs over a nonarchimedean local field of residual characteristic $p \neq \ell$; he also found that the analogous map can fail to be bijective for certain small values of $\ell$. In the mod-$p$ setting, Shin \cite{shin:avweilrepn} notes that several key ingredients of the classical theta correspondence are no longer available at all, and he proposes new geometric constructions to take their place in a hypothetical mod-$p$ theta correspondence. In particular, he constructs algebraic Weil representations of metaplectic group schemes which agree with the classical objects upon taking $\bbC$-coefficients, but whose specializations to $\fpb$-coefficients depend on the input of a $p$-divisible group and are \textit{not}, in general, representations of the metaplectic double cover of $\textnormal{Sp}_{2n}(F)$ considered here.  Computations using Shin's constructions \cite{shin:modptheta} raise intriguing questions, and probably require more data (particularly for type I dual reductive pairs) to interpret. 

We aim to provide some initial information regarding the mod-$p$ representation theory of metaplectic groups over local fields, in the hope that this will help shape hypothetical mod-$p$ versions of the above correspondences.

\subsection{Main results}
Our main result is a classification of the smooth, irreducible, admissible, genuine mod-$p$ representations of the metaplectic double cover of $\textnormal{Sp}_{2n}(F)$ in terms of supercuspidal representations (from now on, ``representation'' will always mean ``smooth $\fpb$-representation'').  This generalizes a classification by the second-named author \cite{peskin:mp2modp} for the metaplectic double cover of $\textnormal{SL}_2(F)$. We will denote $\textnormal{Sp}_{2n}(F)$ by $G$ and its metaplectic cover by $\tG$, and we refer the reader to $\S$\ref{cover} for a construction of $\tG$. Here we note only that there is an open, continuous surjection $\pr: \tG \longrightarrow G$ with kernel isomorphic to $\{\pm 1\}$, and that a \textit{genuine} representation of $\tG$ is one which does not factor through $\pr$.  We also fix a maximal torus $T$ in $G$, and a Borel subgroup $B$ containing $T$.  Given any closed subgroup $H$ of $G$, we let $\tH$ denote the subgroup $\pr^{-1}(H)$ of $\tG$.

We closely follow the existing classifications of irreducible admissible representations of $p$-adic reductive groups (\cite{abe:irredmodp} in the split case, \cite{abehenniartherzigvigneras:irredmodp} in general) which build on the techniques introduced by Herzig in \cite{herzig:modpgln}. In particular, our classification is expressed in terms of \textit{supersingular triples} $(\tP^-, \sigma, \tQ^{-})$. Here $P$ is a standard parabolic subgroup of $G$ (that is, a parabolic subgroup containing $B$), $\sigma$ is a genuine \textit{supersingular} representation (we will discuss this term presently) of the Levi factor of $\tP$, and $\tQ^-$ is the preimage in $\tG$ of a standard parabolic subgroup of $G$ satisfying some conditions with respect to $\tP^-$ and $\sigma$.

Given a supersingular triple $(\tP^-, \sigma, \tQ^-)$, we define a certain parabolically induced genuine $\tG$-representation, denoted by $I(\tP^-, \sigma, \tQ^-)$. After defining two supersingular triples $(\tP^-, \sigma, \tQ^-)$, $(\tP'^-, \sigma', \tQ'^-)$ to be equivalent if $\tP^- = \tP'^-$, $\tQ^- = \tQ'^-$, and $\sigma \cong \sigma'$, we obtain the following classification as our main result:

\begin{thmnonum}[Theorem \ref{classn}] The map $(\tP^-,\sigma,\tQ^-)\longmapsto I(\tP^-,\sigma,\tQ^-)$ gives a bijection between equivalence classes of supersingular triples and isomorphism classes of irreducible admissible genuine representations of $\tG$. 
\end{thmnonum}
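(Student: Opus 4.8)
The plan is to adapt Herzig's method, as refined by Abe and Abe--Henniart--Herzig--Vign\'eras, to the metaplectic setting, where the main extra input is the genuineness constraint and the peculiar behavior of the metaplectic cover over Levi subgroups. I would proceed in three stages: first establish that every irreducible admissible genuine representation $\pi$ of $\tG$ arises as a quotient of a parabolic induction $\Ind_{\tP^-}^{\tG} \sigma$ for some standard parabolic $P$ and some genuine supercuspidal (equivalently supersingular) $\sigma$ of the Levi $\tL_P$; second, given such a surjection, pin down the ``maximal'' $P$ and the auxiliary parabolic $Q$ so as to produce a canonical supersingular triple and the representation $I(\tP^-,\sigma,\tQ^-)$ having $\pi$ as its unique irreducible quotient (or showing $I(\tP^-,\sigma,\tQ^-)$ is itself irreducible); third, verify that inequivalent triples yield non-isomorphic representations, which amounts to a uniqueness/rigidity statement for the data $(\tP^-,\sigma,\tQ^-)$ attached to $\pi$.

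For the first stage I would run the ordinary-parts machinery: Emerton's functor $\Ord_{\tP^-}$ (in the form extended to covers, using that $\tU_P \cong U_P$ splits canonically and intersects the kernel trivially) is right adjoint to $\Ind_{\tP^-}^{\tG}$, and Henniart--Vign\'eras's results on the finiteness and exactness properties of these functors apply to $\tG$ since $\tG$ is a locally profinite group with the same pro-$p$ Iwahori structure upstairs. One then takes $P$ minimal such that $\Ord_{\tP^-}(\pi)\neq 0$; an irreducible subquotient $\sigma$ of $\Ord_{\tP^-}(\pi)$ is genuine and, by minimality of $P$, supercuspidal, and one gets a nonzero map $\Ind_{\tP^-}^{\tG}\sigma\to\pi$, hence a surjection by irreducibility of $\pi$. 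The key technical point flagged in the introduction as ``new technical results'' is presumably exactly the verification that the relevant adjunctions, the classification of supersingular versus supercuspidal (via the mod-$p$ Satake/change-of-weight arguments), and the combinatorics of the finite Weyl group all survive the passage to the cover; I would cite $\S$\ref{cover} and the Henniart--Vign\'eras input for these and treat them as black boxes here.

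For the second and third stages I would follow the structure of \cite{abe:irredmodp} and \cite{abehenniartherzigvigneras:irredmodp} essentially verbatim, replacing $G$, $P$, $L_P$ by $\tG$, $\tP$, $\tL_P$ throughout. Given the surjection $\Ind_{\tP^-}^{\tG}\sigma \twoheadrightarrow \pi$ with $\sigma$ genuine supercuspidal on $\tL_P$, one analyzes the lattice of subrepresentations of $\Ind_{\tP^-}^{\tG}\sigma$ using the fact (again from Henniart--Vign\'eras, transported to the cover) that its irreducible subquotients are parametrized by the intermediate parabolics $Q$ with $P\subseteq Q$ subject to the condition that $\sigma$ extends compatibly, and that there is a unique maximal such $Q$; setting $\tQ^-$ to be its preimage gives the triple, and $I(\tP^-,\sigma,\tQ^-)$ is defined so that it is the maximal subrepresentation of $\Ind_{\tP^-}^{\tG}\sigma$ with irreducible cosocle $\pi$. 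Injectivity of the classification map then follows because the triple is recovered from $\pi$ by: $P$ is the minimal parabolic with $\Ord_{\tP^-}(\pi)\neq 0$, $\sigma$ is determined as (a twist of) $\Ord_{\tP^-}(\pi)$, and $\tQ^-$ is determined by the support of $\pi$ inside the induced representation. I expect the main obstacle to be precisely the compatibility of the metaplectic cocycle with parabolic descent --- ensuring that a genuine supercuspidal $\sigma$ of $\tL_P$ and its ``twists'' by unramified genuine characters behave under $\Ind$ and $\Ord$ exactly as in the linear case, so that the finite-Weyl-group combinatorics governing which $Q$ occur is unchanged; once that is in place (using the results of $\S$\ref{cover} and the cited Henniart--Vign\'eras machinery), the rest of the argument is formal.
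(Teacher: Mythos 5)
Your three-stage architecture (well-definedness/irreducibility of $I(\tP^-,\sigma,\tQ^-)$, injectivity, surjectivity) matches the paper, which assembles Theorem \ref{classn} from Propositions \ref{irred}, \ref{inj}, and \ref{surj}, and your injectivity sketch via ordinary parts is indeed how the paper argues (following \S III.24 of \cite{abehenniartherzigvigneras:irredmodp}). But your first stage contains a genuine gap. You propose to obtain a surjection $\Ind_{\tP^-}^{\tG}\sigma \twoheadrightarrow \pi$ with $\sigma$ supercuspidal by taking $P$ minimal with $\Ord_{\tP^-}(\pi)\neq 0$. In the mod-$p$ setting this does not work as stated: (a) for a non-supercuspidal $\pi$ one only knows $\pi$ is a \emph{subquotient} of some parabolic induction, and the adjunction $\Hom_{\tG}(\Ind_{\tP^-}^{\tG}\sigma,\pi)\cong\Hom_{\tM}(\sigma,\Ord_{\tP}(\pi))$ detects quotients, so nonvanishing of ordinary parts for some proper parabolic is not available a priori; (b) an irreducible \emph{subquotient} of $\Ord_{\tP}(\pi)$ is not a subobject, so it does not produce a map by adjunction; and (c) deducing supercuspidality of $\sigma$ from minimality of $P$ already presupposes the classification (or the equivalence supersingular $\Leftrightarrow$ supercuspidal) on the Levi, which in the paper is Proposition \ref{ssscG}/Corollary \ref{sssc}, a \emph{consequence} of Theorem \ref{classn}, not an input. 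The paper instead proves surjectivity (Proposition \ref{surj}) by the weight/Hecke-eigenvalue route: a surjection $\chi\otimes_{\cH_{\tG}(\tV)}\ind_{\tK}^{\tG}(\tV)\twoheadrightarrow\pi$, minimization of $|\Pi_\nu|$ using the change-of-weight theorem, Proposition \ref{compisom} to convert compact induction into parabolic induction from the Levi attached to $\Pi_\nu\cup\Pi(\chi)$, and then an induction on $n$ together with the $\chi_\psi$-twist of \S\ref{twistrep}, which transports the $\textnormal{GL}$-blocks of the Levi to Herzig's classification; Corollary \ref{parabindjh} then identifies the composition factors as the $I(\tP^-,\sigma,\tQ^-)$.

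A second, related weakness is that you black-box exactly the paper's main new technical content. The change-of-weight criterion in the metaplectic case (Theorem \ref{thm:changeofwt}, resting on the Satake computation $\cS_{\tG}^{\tT}(T_{2\lambda})=\tau_{2\lambda}$ for the long simple root in Proposition \ref{propn:st2l}) differs from the linear criterion: no hypothesis on $\tau_{\alpha_n^\vee}$ is needed when $i=n$, and correspondingly $\alpha_n$ can never lie in $\Pi(\sigma)$. Both Proposition \ref{irred} (irreducibility of $I(\tP^-,\sigma,\tQ^-)$, which you leave essentially unaddressed) and Proposition \ref{surj} hinge on verifying that the hypotheses of this modified criterion are met at each inductive step; asserting that ``the finite-Weyl-group combinatorics governing which $Q$ occur is unchanged'' and citing Henniart--Vign\'eras does not discharge this, since the weakening of the criterion is precisely where the metaplectic classification departs from the linear one (and is responsible for the different irreducibility statements in Lemma \ref{msirred} and Corollary \ref{irredps}).
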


We now discuss the ingredients needed to state the classification more precisely. \\

\noindent 1.~\textit{Supersingular representations.} We refer the reader to Definition \ref{supersingular} for the precise definition of supersingularity in our context. The notion of a \textit{supersingular} representation was introduced by Barthel--Livn\'e (\cite{barthellivne:ordunram}, \cite{barthellivne:irredmodp}) in the case of $\textnormal{GL}_2(F)$, and a generalized definition for split reductive groups was made by Herzig (\cite{herzig:modpgln} $\S$1.2.1) using the mod-$p$ Satake transform of \cite{herzig:modpsatake}. The central piece of the definition is a condition on the systems of eigenvalues which occur in weight spaces of a representation under the action of the associated spherical Hecke algebras: no such system of eigenvalues may factor through a Satake transform to a spherical Hecke algebra of any proper Levi subgroup. Herzig also requires supersingular representations to be irreducible and admissible (admissibility was not required by \cite{barthellivne:ordunram}, \cite{barthellivne:irredmodp}), and shows that supersingularity is equivalent to supercuspidality for this class of representations.  We essentially adopt Herzig's definition (adding a genuineness condition), using results of Henniart--Vign\'eras \cite{henniartvigneras:modpsatake} which guarantee that Herzig's mod-$p$ Satake transform adapts well to the metaplectic case. For the appropriate definition of supercuspidality in our case, supersingularity is again equivalent to supercuspidality (see below).\\

% \begin{defnonum}[Definition \ref{supersingular}]
% Let $\tM$ be a standard Levi subgroup of $\tG$ and $\sigma$ a genuine irreducible admissible representation of $\tM$.  We say $\sigma$ is \textit{supersingular (with respect to $(\tK\cap \tM,\tT,\tB\cap\tM)$)} if the following condition is satisfied: for all weights $\tV$ of $\tM$ and for all characters $\chi:\cH_{\tM}(\tV)\longrightarrow \fpb$ such that there exists a nonzero map $\chi\otimes_{\cH_{\tM}(\tV)}\ind_{\tM\cap\tK}^{\tM}(\tV)\longrightarrow\sigma$, we have $\Pi_M(\chi) = \Pi_M$.     
% \end{defnonum}

\noindent 2.~\textit{Generalized Steinberg representations.} Let $P \subset Q$ be two standard parabolic subgroups of $G$, and let $P^{-} \subset Q^{-}$ denote the respective opposite parabolic subgroups. The \textit{generalized Steinberg representation} associated to the pair $(P^{-}, Q^{-})$ is defined to be 
$$\St_{P^-}^{Q^-}:= \Ind_{P^-}^{Q^-}(1)\Big/\sum_{Q'}\Ind_{Q'^-}^{Q^-}(1),$$
where the sum runs over standard parabolic subgroups $Q'$ such that $P^{-} \subsetneq Q'^{-} \subset Q^{-}$, and where $1$ denotes the trivial representation of each group $Q'$ in turn. Each $\St_{P^{-}}^{Q^-}$ is an irreducible admissible representation of $Q^-$ (see \cite{ly:steinberg}, Th\'eor\`eme 3.1, and note that the unipotent radical of $Q^-$ acts trivially). In the following, we will let $\St_{P^{-}}^{Q^{-}}$ denote the representation of $\tQ^-$ obtained by inflating the generalized Steinberg representation of $Q^-$ (note that this inflation is not a genuine representation of $\tQ^-$). \\

\noindent 3.~\textit{Supersingular triples.} Let $P$ be a standard parabolic subgroup of $G$ with Levi factor $M$, and let $\sigma$ be a genuine supersingular representation of $\tM$. We define a certain standard Levi subgroup $M(\sigma) \supset M$ of $G$ such that $\sigma$ extends uniquely to a genuine representation ${}^e\sigma$ of the preimage $\tM(\sigma)$ (see $\S$\ref{classnchapter} for an explicit definition of $M(\sigma)$). Let $P(\sigma)$ be the standard parabolic subgroup of $G$ whose Levi factor is $M(\sigma)$, and let $Q$ be another standard parabolic subgroup of $G$. We say that $(\tP^{-}, \sigma, \tQ^{-})$ is a \textit{supersingular triple} if $\tP^{-} \subset \tQ^{-} \subset \tP(\sigma)^{-}$. \\

\noindent 4.~\textit{$\tG$-representation associated to a supersingular triple.} Let $(\tP^-,\sigma,\tQ^-)$ be a supersingular triple, let $\tP(\sigma)$ denote the parabolic subgroup of $\tG$ mentioned in the previous point, and inflate ${}^e\sigma$ from $\tM(\sigma)$ to $\tP(\sigma)^{-}$. We then define 
$$I(\tP^-,\sigma,\tQ^-) := \Ind_{\tP(\sigma)^-}^{\tG}\left({}^e\sigma\otimes\St_{Q^-}^{P(\sigma)^-}\right).$$
Following \cite{abehenniartherzigvigneras:irredmodp}, we show in Proposition \ref{irred} that $I(\tP^{-}, \sigma, \tQ^{-})$ is irreducible, admissible, and genuine whenever $(\tP^-,\sigma,\tQ^-)$ is a supersingular triple.\\

Using the classification theorem above, we obtain several consequences.  We say that an irreducible admissible genuine representation $\pi$ of $\tG$ is \textit{supercuspidal} if $\pi$ is not isomorphic to a subquotient of the parabolic induction of an irreducible admissible genuine representation of a proper Levi subgroup of $\tG$.  We then have:

\begin{thmnonum}[Proposition \ref{ssscG}]
Let $\pi$ be an irreducible admissible genuine representation of $\tG$.  Then $\pi$ is supersingular if and only if $\pi$ is supercuspidal.  
\end{thmnonum}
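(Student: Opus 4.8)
The plan is to prove the two implications separately. Supersingularity will imply supercuspidality via the incompatibility of supersingular systems of Hecke eigenvalues with parabolic induction, and the converse will follow from the classification of Theorem \ref{classn} together with the explicit shape of the representations $I(\tP^-,\sigma,\tQ^-)$.

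For the forward implication I would suppose $\pi$ supersingular and assume, for contradiction, that $\pi$ is a subquotient of $\Ind_{\tP^-}^{\tG}(\tau)$ for some proper standard parabolic subgroup $P$ with Levi factor $M$ and some irreducible admissible genuine representation $\tau$ of $\tM$. The key input is the metaplectic analogue of Herzig's comparison between parabolic induction and the mod-$p$ Satake transform, obtained from the results of Henniart--Vign\'eras \cite{henniartvigneras:modpsatake}: for every genuine weight $V$ of a maximal compact subgroup $\tK$ of $\tG$, every system of eigenvalues of the genuine spherical Hecke algebra $\cH(\tG,\tK,V)$ occurring on $\Hom_{\tK}\!\big(V,\Ind_{\tP^-}^{\tG}(\tau)\big)$ lies in the image of the Satake transform $\cS_M$ from the corresponding spherical Hecke algebra of $\tM$. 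Since systems of eigenvalues occurring in a subquotient also occur in the ambient representation, every eigensystem of $\pi$ would then factor through $\cS_M$ with $M$ proper, contradicting Definition \ref{supersingular}. Hence $\pi$ is supercuspidal.

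For the converse I would argue by contraposition, assuming $\pi$ is not supersingular. By Theorem \ref{classn}, $\pi\cong I(\tP^-,\sigma,\tQ^-)$ for some supersingular triple $(\tP^-,\sigma,\tQ^-)$. If $P=G$, then $M=G$, so $M(\sigma)=G$ (as $M(\sigma)\supseteq M$) and $P(\sigma)=G$; the condition $\tP^-\subseteq\tQ^-\subseteq\tP(\sigma)^-$ then forces $Q=G$, and since ${}^e\sigma=\sigma$ and $\St_{G^-}^{G^-}$ is the trivial representation we obtain $\pi\cong\Ind_{\tG}^{\tG}(\sigma)=\sigma$, which is supersingular --- contrary to assumption. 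Hence $P\subsetneq G$, so $\tM$ is a proper Levi subgroup of $\tG$. It then follows from transitivity of parabolic induction, the exactness of $\Ind_{\tP(\sigma)^-}^{\tG}$, and the description (used in the proof of Theorem \ref{classn}) of the parabolic induction of $\sigma$ to its Levi $\tM(\sigma)$ --- according to which this induction has composition factors ${}^e\sigma\otimes\St_{Q'^-}^{P(\sigma)^-}$ for the standard parabolics $Q'$ with $P\subseteq Q'\subseteq P(\sigma)$ --- that $\pi=I(\tP^-,\sigma,\tQ^-)$ is a subquotient of $\Ind_{\tP^-}^{\tG}(\sigma)$. Since $\sigma$ is an irreducible admissible genuine representation of the proper Levi subgroup $\tM$, the representation $\pi$ is not supercuspidal, completing the contrapositive.

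The step I expect to be the main obstacle is the forward implication, and specifically the transfer to the cover of Herzig's parabolic-induction/Satake compatibility: one must check that the mod-$p$ Satake isomorphism of \cite{henniartvigneras:modpsatake} and the transitivity of the transforms $\cS_M$ under composition of parabolics hold for the genuine spherical Hecke algebras of $\tG$ and of its Levi subgroups, uniformly in the weight $V$, and that the behaviour of the cover over the relevant parahoric subgroups causes no interference --- this is where the technical results adapting Herzig's method to $\tG$ are needed. Given that input, both directions are short; one may equivalently package the argument into the single assertion that $I(\tP^-,\sigma,\tQ^-)$ is supersingular if and only if $P=G$, which is equivalent to the proposition by Theorem \ref{classn}.
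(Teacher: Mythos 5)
Your converse direction (supercuspidal $\Rightarrow$ supersingular, argued by contraposition through Theorem \ref{classn} and the composition-factor description of $\Ind_{\tP^-}^{\tG}(\sigma)$, i.e.\ Corollary \ref{parabindjh}) is sound and is essentially the route the paper takes. The problem is in your forward implication. The step ``systems of eigenvalues occurring in a subquotient also occur in the ambient representation'' is not justified in characteristic $p$: a weight $\tV$ is not projective in the category of smooth $\fpb$-representations of $\tK$, so $\Hom_{\tK}(\tV,-)$ is only left exact. If $\pi=W_1/W_2$ with $W_2\subset W_1\subset \Ind_{\tP^-}^{\tG}(\tau)$, a Hecke eigenvector in $\Hom_{\tK}(\tV,\pi)$ need not lift to $\Hom_{\tK}(\tV,W_1)$ (the obstruction lives in a smooth $\operatorname{Ext}^1_{\tK}(\tV,W_2)$, which has no reason to vanish), so you cannot conclude that an eigensystem of $\pi$ factors through $\cS_{\tG}^{\tM}$ merely because all eigensystems of the full induced representation do. This transfer of eigensystems across subquotients is precisely the point that forces the equivalence ``supersingular $=$ supercuspidal'' to be deduced from the classification rather than proved directly, both here and in \cite{abehenniartherzigvigneras:irredmodp}.

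The paper's proof (VI.2 Theorem of \cite{abehenniartherzigvigneras:irredmodp}, adapted via Theorem \ref{classn} and Corollary \ref{parabindjh}) repairs exactly this step: if $\pi$ is supersingular and a subquotient of $\Ind_{\tQ^-}^{\tG}(\tau)$ with $Q$ proper (after conjugating $Q$ to a standard parabolic), one first applies the classification to $\tau$ on the Levi of $\tQ$, so that by exactness and transitivity of parabolic induction $\pi$ is a subquotient of $\Ind_{\tP_0^-}^{\tG}(\sigma_0)$ with $\sigma_0$ supersingular on a \emph{proper} standard Levi $\tM_0$; Corollary \ref{parabindjh} then identifies $\pi\cong I(\tP_0^-,\sigma_0,\tQ_0^-)$, and either Proposition \ref{pichi} (every eigensystem $\chi$ of this representation has $\Pi(\chi)=\Pi_{M_0}\neq\Pi$, while admissibility guarantees at least one eigensystem exists) or the injectivity statement of Proposition \ref{inj} (comparing with the triple $(\tG,\pi,\tG)$) gives the contradiction with supersingularity. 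The key is that Proposition \ref{pichi} computes the eigensystems of $I(\tP_0^-,\sigma_0,\tQ_0^-)$ directly from its realization as a full parabolic induction $\Ind_{\tP_0(\sigma_0)^-}^{\tG}({}^e\sigma_0\otimes\St_{Q_0^-}^{P_0(\sigma_0)^-})$, using the Satake compatibility of \cite{henniartvigneras:cptparabolic}, so no lifting of eigenvectors from a subquotient is ever needed. If you replace your Hecke-eigensystem transfer by this classification-based argument, both directions go through; the remaining ingredients you cite (the Henniart--Vign\'eras Satake formalism and its transitivity) are indeed available in the metaplectic setting, as recorded in Proposition \ref{satakeprop} and Proposition \ref{compisom}.
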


Next, let $P_\s$ denote the (standard) Siegel parabolic subgroup, and $M_{\s}\cong\textnormal{GL}_n(F)$ its standard Levi factor.  The classification theorem above, together with the analogous result in the reductive case, implies that parabolic induction from $\tM_{\s}$ preserves irreducibility:

\begin{thmnonum}[Lemma \ref{msirred}]
Let $\sigma$ be an irreducible admissible genuine representation of $\tM_{\s}$.  The parabolic induction $\Ind_{\tP_{\s}^{-}}^{\tG}(\sigma)$ is then an irreducible admissible genuine representation of $\tG$. 
\end{thmnonum}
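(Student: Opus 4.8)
The plan is to deduce the statement from the classification Theorem~\ref{classn} together with the corresponding classification for $\tM_{\s}\cong\widetilde{\textnormal{GL}}_n(F)$, and then to invoke Proposition~\ref{irred}. Admissibility of $\Ind_{\tP_{\s}^{-}}^{\tG}(\sigma)$ is automatic from admissibility of $\sigma$ and compactness of $\tG/\tP_{\s}$, and genuineness is clear since $\sigma$ is genuine; so the content is irreducibility. The classification of irreducible admissible genuine representations of $\tM_{\s}$ is available: it follows from \cite{abehenniartherzigvigneras:irredmodp} for $\textnormal{GL}_n(F)$ upon noting that genuine representations of $\tM_{\s}$ are identified with representations of $M_{\s}$ by twisting with a fixed genuine character, or alternatively by running the arguments of the present paper over $\tM_{\s}$. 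Using it, I would write
\[
\sigma\;\cong\;\Ind_{\widetilde{P_1(\tau)}^{-}}^{\tM_{\s}}\bigl({}^{e}\tau\otimes\St_{Q_1^{-}}^{P_1(\tau)^{-}}\bigr),
\]
for a supersingular triple $(\tP_1^{-},\tau,\tQ_1^{-})$ of $\tM_{\s}$: here $P_1\subseteq Q_1\subseteq P_1(\tau)$ are standard parabolic subgroups of $M_{\s}$, the group $M_1$ is the Levi factor of $P_1$, $\tau$ a genuine supersingular representation of $\tM_1$, $M_1(\tau)$ the Levi subgroup attached to $\tau$ computed inside $M_{\s}$, ${}^{e}\tau$ its unique genuine extension to $\widetilde{M_1(\tau)}$, and $\St_{Q_1^{-}}^{P_1(\tau)^{-}}$ the generalized Steinberg representation of $M_1(\tau)$. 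By transitivity of parabolic induction,
\[
\Ind_{\tP_{\s}^{-}}^{\tG}(\sigma)\;\cong\;\Ind_{\widetilde{\mathcal{P}}^{-}}^{\tG}\bigl({}^{e}\tau\otimes\St_{Q_1^{-}}^{P_1(\tau)^{-}}\bigr),
\]
where $\mathcal{P}$ is the standard parabolic subgroup of $G$ contained in $P_{\s}$ with $\mathcal{P}\cap M_{\s}=P_1(\tau)$, so that $\mathcal{P}$ has Levi factor $M_1(\tau)$.

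It then remains to recognize the right-hand side as $I(\tP_1^{-},\tau,\tQ_1^{-})$, with $P_1$ and $Q_1$ now regarded as standard parabolics of $G$. Since supersingularity of $\tau$ is intrinsic to $\tM_1$, the one point to be settled is that the Levi subgroup $M_G(\tau)$ attached to $\tau$ computed inside $G$ coincides with $M_1(\tau)$. Granting this, $\widetilde{P_G(\tau)}^{-}=\widetilde{\mathcal{P}}^{-}$ and $Q_1\subseteq\mathcal{P}$, so $(\tP_1^{-},\tau,\tQ_1^{-})$ is a supersingular triple for $\tG$; moreover ${}^{e}\tau$ and the generalized Steinberg representation $\St_{Q_1^{-}}^{P_1(\tau)^{-}}$ are unchanged, because every standard parabolic of $G$ between $Q_1^{-}$ and $P_1(\tau)^{-}$ is contained in $P_{\s}^{-}$ and hence already lies in $M_{\s}$. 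Thus $\Ind_{\tP_{\s}^{-}}^{\tG}(\sigma)\cong I(\tP_1^{-},\tau,\tQ_1^{-})$, which is irreducible, admissible, and genuine by Proposition~\ref{irred}. To prove $M_G(\tau)=M_1(\tau)$ I would use the explicit description of the construction $M(\cdot)$ from $\S\ref{classnchapter}$: the simple roots of $M_{\s}$ are all of those of $G$ except the long one, say $\alpha_n$, and whether a simple root of $M_{\s}$ is adjoined when forming $M_G(\tau)$ depends only on a Levi subgroup contained in $M_{\s}$, hence is unaffected by the passage to $G$; so it suffices to show $\alpha_n$ is never adjoined. A short case analysis on the block structure of $M_1\subseteq M_{\s}$ reduces this to the failure of the rank-one extension step across $\alpha_n$ for genuine data --- the phenomenon underlying the complete irreducibility of rank-one metaplectic principal series recalled in the introduction.

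The main obstacle is the identity $M_G(\tau)=M_1(\tau)$: it is exactly here that the distinguished position of the Siegel parabolic and the genuineness hypothesis enter, and proving it amounts to showing that enlarging $\widetilde{\textnormal{GL}}_n(F)$ to $\widetilde{\textnormal{Sp}}_{2n}(F)$ never enlarges the Levi subgroup attached to a genuine supersingular representation --- the higher-rank counterpart of the obstruction that keeps rank-one metaplectic principal series irreducible.
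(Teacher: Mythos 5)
Your proposal is correct and follows essentially the same route as the paper's proof of Lemma \ref{msirred}: write $\sigma$ via the $\textnormal{GL}_n(F)$ classification and the $\chps$-twist, apply transitivity of parabolic induction, identify the result with an $I(\tP^-,\tau,\tQ^-)$ by checking that the Levi attached to the supersingular datum is the same computed in $\tM_{\s}$ and in $\tG$, and invoke Proposition \ref{irred}. The one point you single out as the main obstacle is in fact immediate from the paper's setup: since $M'_{\alpha_n}\cong\widetilde{\textnormal{SL}}_2(F)$ (Lemma \ref{malpha}), the subgroup $\tT\cap M'_{\alpha_n}$ contains $\mu_2$, which acts by $\varepsilon$ on any genuine representation, so $\alpha_n$ can never lie in $\Pi(\tau)$ (the remark following the definition of $\Pi(\sigma)$), and no case analysis on the block structure of $M_1$ is needed.
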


In addition to the above, we also obtain the following information about the genuine principal series representations of $\tG$.  Here $\Pi$ denotes the set of simple roots of $G$ with respect to $T$ and $B$, and for $\alpha\in \Pi$, $\widetilde{\alpha}^{\vee}$ denotes a certain canonical map $F^{\times} \longrightarrow \tT$ (which is \textit{not} a homomorphism in general) lifting the coroot $\alpha^{\vee}: F^\times \longrightarrow T$; see $\S$\ref{splittings}.

\begin{thmnonum}[Corollary \ref{irredps}]
Let $\sigma$ be a genuine character of $\tT$. 
\begin{enumerate}
\item The length of $\Ind_{\tB^{-}}^{\tG}(\sigma)$ is at most $2^{n-1}$, and is equal to $2^{n-1}$ if and only if $\sigma \circ \widetilde{\alpha}^{\vee}(x) = 1$ for every $x\in F^\times$ and every short root $\alpha\in \Pi$.
\item The representation $\Ind_{\tB^{-}}^{\tG}(\sigma)$ is irreducible if and only if, for every short root $\alpha\in \Pi$, there exists $x_\alpha\in F^\times$ such that $\sigma \circ \widetilde{\alpha}^{\vee}(x_\alpha) \neq 1$. 
\item Let $\sigma'$ be a genuine character of $\tT$ such that $\sigma \neq \sigma'$. Then $\Ind_{\tB^{-}}^{\tG}(\sigma)$ and $\Ind_{\tB^{-}}^{\tG}(\sigma')$ are inequivalent. 
\end{enumerate}
\end{thmnonum}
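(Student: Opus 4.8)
The plan is to derive all three parts from the classification of Theorem~\ref{classn}, specialized to the case $\tP^-=\tB^-$, together with an explicit description of $M(\sigma)$ and of the Jordan--H\"older constituents of $\Ind_{\tB^-}^{\tG}(\sigma)$.

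First I would note that every genuine character $\sigma$ of $\tT$ is supersingular, since the torus $T$ has no proper Levi subgroups and so the condition of Definition~\ref{supersingular} is vacuous for $\tT$. Thus $(\tB^-,\sigma,\tQ^-)$ is a supersingular triple precisely for those standard parabolics $Q$ of $G$ with $\tB^-\subseteq\tQ^-\subseteq\tP(\sigma)^-$, and these are in bijection with the subsets of the set of simple roots of $M(\sigma)$. The next step is therefore to identify $M(\sigma)$: I expect it to be the standard Levi subgroup generated by $T$ and the set $S$ of \emph{short} simple roots $\alpha$ with $\sigma\circ\widetilde{\alpha}^\vee\equiv 1$ on $F^\times$. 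The structural input is the behaviour of the cover over rank-one subgroups: the metaplectic cover restricted to the copy of $\textnormal{SL}_2(F)$ attached to a short simple root $\alpha$ splits, so $\sigma$ extends over $\alpha$ exactly when $\sigma\circ\widetilde{\alpha}^\vee$ is trivial; whereas the cover restricted to the $\textnormal{SL}_2(F)$ attached to the unique long simple root is the nonlinear $\widetilde{\textnormal{SL}}_2(F)$, which (as the cover of $\textnormal{SL}_2(F)$ is nonsplit) is a perfect group, hence has no genuine character, so $\sigma$ never extends over the long simple root. Granting this, $M(\sigma)$ is as claimed and is of type $A$ (a product of general linear groups and a central torus), and the number of standard parabolics $Q$ with $B\subseteq Q\subseteq P(\sigma)$ is $2^{|S|}$.

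Next I would compute the length of $\Ind_{\tB^-}^{\tG}(\sigma)$. By transitivity of induction, $\Ind_{\tB^-}^{\tG}(\sigma)\cong\Ind_{\tP(\sigma)^-}^{\tG}\big(\Ind_{\tB^-\cap\tM(\sigma)}^{\tM(\sigma)}(\sigma)\big)$; since $\sigma$ extends to the genuine character ${}^e\sigma$ of $\tM(\sigma)$, the inner induction equals ${}^e\sigma\otimes\Ind_{\tB^-\cap\tM(\sigma)}^{\tM(\sigma)}(\mathbf 1)$, and the second factor is inflated from the trivial character of the reductive group $M(\sigma)$, whose parabolic induction is multiplicity-free with Jordan--H\"older factors the generalized Steinberg representations $\St_{Q^-}^{P(\sigma)^-}$, indexed by $B\subseteq Q\subseteq P(\sigma)$ (a standard $\textnormal{GL}_n$ fact, cf.\ \cite{herzig:modpgln} and \cite{abehenniartherzigvigneras:irredmodp}). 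Applying the exact functor $\Ind_{\tP(\sigma)^-}^{\tG}$, and using Proposition~\ref{irred} (each resulting constituent is the irreducible $I(\tB^-,\sigma,\tQ^-)$) together with the injectivity in Theorem~\ref{classn} (distinct triples yield non-isomorphic constituents), I conclude that $\Ind_{\tB^-}^{\tG}(\sigma)$ is multiplicity-free of length $2^{|S|}$. Since $\textnormal{Sp}_{2n}(F)$ has exactly $n-1$ short simple roots, $|S|\leq n-1$, with equality if and only if $\sigma\circ\widetilde{\alpha}^\vee\equiv 1$ for every short simple $\alpha$; this is part~(1). The representation is irreducible if and only if $|S|=0$, i.e.\ if and only if for every short simple $\alpha$ there is $x_\alpha\in F^\times$ with $\sigma\circ\widetilde{\alpha}^\vee(x_\alpha)\neq 1$; this is part~(2).

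For part~(3): if $\sigma\neq\sigma'$ were genuine characters with $\Ind_{\tB^-}^{\tG}(\sigma)$ and $\Ind_{\tB^-}^{\tG}(\sigma')$ sharing a Jordan--H\"older constituent, then $I(\tB^-,\sigma,\tQ^-)\cong I(\tB^-,\sigma',\tQ'^-)$ for some $\tQ^-,\tQ'^-$, and the injectivity in Theorem~\ref{classn} would force $\sigma\cong\sigma'$, a contradiction; hence the two principal series have no common constituent and are in particular inequivalent. The hardest part is the identification of $M(\sigma)$: establishing the short/long dichotomy from the splitting behaviour of the metaplectic cover over rank-one subgroups, and checking that after the twist by ${}^e\sigma$ the inner principal series of $\tM(\sigma)$ really reduces to the known reductive $\textnormal{GL}$ case. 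Everything else is formal once Theorem~\ref{classn} and Proposition~\ref{irred} are in hand.
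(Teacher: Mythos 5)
Your proposal is correct and follows essentially the same route as the paper: the paper deduces (1) and (2) from Corollary \ref{parabindjh} (whose proof, via ${}^e\sigma\otimes\Ind(1)$ and the known Steinberg filtration of the trivial principal series, you essentially reproduce) together with the identification of $\Pi(\sigma)$ as the set of short simple roots on which $\sigma\circ\widetilde{\alpha}^{\vee}$ is trivial, the long root being excluded by Lemma \ref{malpha} and genuineness, and deduces (3) from the injectivity in Theorem \ref{classn}. Your only deviations are cosmetic: you rule out $\alpha_n$ via perfectness of $\widetilde{\textnormal{SL}}_2(F)$ rather than via $\mu_2\subset\tT\cap M'_{\alpha_n}$ acting by $\varepsilon$, which is an equally valid one-line argument.
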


We thus get an irreducibility criterion for principal series representations of $\tG$ which involves fewer conditions than the analogous criterion for $G$ (compare \cite{abe:irredmodp}, Theorem 1.3): in our criterion, no role is played by the restriction of $\sigma$ to the image of $\widetilde{\alpha}^{\vee}$ for the long root $\alpha \in \Pi$. Our irreducibility criterion generalizes the fact (\cite{peskin:mp2modp}) that $\widetilde{\textnormal{SL}}_2(F)$ has no degenerate mod-$p$ principal series representations, and therefore no obvious mod-$p$ analog of the nonsupercuspidal elementary Weil representation.

\subsection{Techniques}

In most respects, the methods of Herzig \cite{herzig:modpgln}, Abe \cite{abe:irredmodp}, and Abe--Henniart--Herzig--Vign\'eras \cite{abehenniartherzigvigneras:irredmodp} for reductive groups over $F$ turn out to go through for the metaplectic group $\tG$ with only minor adaptations. The work of Henniart--Vign\'eras in \cite{henniartvigneras:modpsatake} and \cite{henniartvigneras:cptparabolic} establishes many of the technical ingredients of Herzig's method for a large class of groups including $\tG$; others are easily adapted from the reductive case (see $\S$\ref{subsec:satake}).  Consequently, whenever a proof requires only minor cosmetic changes from the analogous proof in the reductive case, we often simply give a reference to the literature.  For the sake of completeness, however, we have attempted to give a more detailed account of the classification proof ($\S$\ref{classnchapter}).

The main difference between the metaplectic and reductive cases appears in the \textit{change-of-weight} step, in which one finds a criterion for the existence of an isomorphism 
$$\chi\otimes_{\cH_{\tG}(\tV)}\ind_{\tK}^{\tG}(\tV) \stackrel{\sim}{\longrightarrow} \chi\otimes_{\cH_{\tG}(\tV')}\ind_{\tK}^{\tT}(\tV').$$
Here, $\tV$ and $\tV'$ are irreducible genuine representations of the maximal compact subgroup $\tK \subset \tG$, $\cH_{\tG}(\tV)$ denotes the Hecke algebra of $\tG$-intertwiners of the compact induction $\ind_{\tK}^{\tG}(\tV)$ (and likewise for $\tV'$), and $\chi$ denotes an $\fpb$-character of $\cH_{\tG}(\tV)$ (viewed also as a character of $\cH_{\tG}(\tV')$ via a natural identification of the two algebras).  We find a criterion (Theorem \ref{thm:changeofwt}) for the existence of such an isomorphism which is strictly weaker than the analogous criterion for $G$. (See \cite{abe:irredmodp}, Theorem 4.1 for the change-of-weight criterion applicable to $G$.) This weakening of the change-of-weight criterion is responsible for the relative weakness of the irreducibility criteria of Lemma \ref{msirred} and Corollary \ref{irredps}, as compared to the criteria for representations of $G$. 

The proof of our change-of-weight criterion is given in $\S$\ref{ch:changeofwt}, and involves computing the Satake transform of a certain Hecke operator. In $\S$\ref{classnchapter}, after adapting the notion of supersingularity to our context and modifying the change-of-weight criterion as per the result of $\S$\ref{ch:changeofwt}, we continue with the classification as in the reductive case. \\

\noindent\textbf{Acknowledgements.}  The authors would like to thank Florian Herzig for suggesting the alternate proof of Proposition \ref{propn:st2l} in the case of a short simple root.  Some of this work was carried out while the first-named author visited the University of British Columbia, and both authors wish to thank Rachel Ollivier for her support via an NSERC Discovery Grant. The first-named author was supported by NSF grant DMS-1400779 and an EPDI fellowship. The authors thank the anonymous referee for several useful comments.

\section{Notation and Preliminary Results}

Let $p$ be an odd prime and $F$ a nonarchimedean local field of residual characteristic $p$.  We let $\cO$ denote the ring of integers of $F$, $\varpi\in \cO$ a fixed uniformizer, and $\kk$ the residue field $\cO/\varpi\cO$.  We fix an algebraic closure $\overline{\kk}$ of $\kk$, and denote the order of $\kk$ by $q$.  Furthermore, we let $\mu_2 := \mu_2(F) = \{\pm 1\}$ denote the group of square roots of unity in $F$, and let
$$(-,-)_F:F^\times \times F^\times\longrightarrow \mu_2$$
denote the quadratic Hilbert symbol.

\subsection{Symplectic groups}
Fix an integer $n \geq 1$ and let $G := \mathbf{Sp}_{2n}$ denote the symplectic group of rank $n$, defined and split over $F$.  We will abuse notation and write $G$ for $G(F)$, the group of $F$-points (and similarly for other algebraic groups over $F$).  We fix a split maximal torus $T$, and we choose a hyperspecial point in the corresponding apartment of the Bruhat--Tits building of $G$.  Such a point gives a connected reductive integral model of $G$ over $\cO$, and we continue to denote this model by $G$.  We set $K := G(\cO)$, which is a hyperspecial maximal compact subgroup.  The finite group of $\kk$-points of $G$ will be denoted $G(\kk)$.

Let $X^*(T)$ (respectively, $X_*(T)$) denote the group of algebraic characters (resp., cocharacters) of $T$ (or, more precisely, of the algebraic group defining $T$). Let $\Phi \subset X^*(T)$ denote the root system of $G$ with respect to $T$; thus $\Phi$ is a root system of type $C_n$.  We fix a set of simple roots 
$$\Pi := \{\alpha_i : 1 \leq i \leq n\},$$
labeled so that $\alpha_n$ is the unique long simple root and so that the roots $\alpha_i$ and $\alpha_{i + 1}$ are adjacent in the Dynkin diagram of $G$.  We fix a $\bbZ$-basis $\{ \chi_i : 1 \leq i \leq n\}$ for $X^*(T)$, where $2\chi_n := \alpha_n$ and $\chi_i := \alpha_i + \chi_{i+1}$ for $1 \leq i \leq n-1$. Let 
$$\langle - , -\rangle:X^*(T) \times X_*(T) \longrightarrow \bbZ$$ 
denote the natural perfect pairing, and let $\alpha_i^{\vee} \in X_*(T)$ denote the coroot corresponding to $\alpha_i$ for $1 \leq i \leq n$ (normalized so that $\langle \alpha_i, \alpha_i^{\vee}\rangle = 2$).  We fix a $\bbZ$-basis $\{ \lambda_i : 1 \leq i \leq n\}$ for $X_*(T)$, where $\lambda_n := \alpha_n^{\vee}$ and $\lambda_i := \alpha_i + \lambda_{i + 1}$ for $1 \leq i \leq n-1$.  Note that $\langle\chi_i, \lambda_j\rangle = \delta_{i,j}$.

The simple roots define a partition $\Phi = \Phi^+ \sqcup\Phi^-$ and a partial order $\leq$ on $X_*(T)$: we say $\lambda \leq \mu$ if $\mu - \lambda = \sum_{i = 1}^n a_i\alpha_i^\vee$, where $a_i\in \bbZ_{\geq 0}$.  Moreover, the set $\Pi$ also defines the monoid $X_*(T)_-$ of antidominant cocharacters, given by
$$X_*(T)_- := \{\lambda\in X_*(T): \langle\alpha,\lambda\rangle \leq 0 \textnormal{ for all }\alpha\in \Pi\}.$$

For $\alpha\in \Phi$, we let $U_\alpha$ denote the root subgroup associated to $\alpha$, and let $u_\alpha:F\stackrel{\sim}{\longrightarrow}U_\alpha$ denote a fixed root morphism such that the set $\{u_{\alpha}: \, \alpha \in \Phi\}$ satisfies the properties of Lemma 8.1.4 of \cite{springer:linalggrps}. In particular, $tu_\alpha(x)t^{-1} = u_\alpha(\alpha(t)x)$ for $t\in T$, $x\in F$.  Furthermore, since $G$ is (the group of $F$-points of) a Chevalley group, we may and will choose our root morphisms so that all structure constants $c_{\alpha, \beta; j, k}$ appearing in the commutator formula (for $\beta\neq \pm \alpha$)
\begin{equation}
\label{ucomm}
[u_{\alpha}(x), u_{\beta}(y)] := u_{\alpha}(x)u_{\beta}(y)u_{\alpha}(x)^{-1}u_{\beta}(y)^{-1} = \prod_{\substack{j, k > 0\\ j\alpha + k\beta \in \Phi}} u_{j\alpha + k\beta}(c_{\alpha, \beta; j, k}x^jy^k)
\end{equation}
belong to $\bbZ$ (where the product is taken with respect to some fixed total order on $\Phi$).  We will also assume that the subgroup $K$ is generated by $\{ u_{\alpha}(x): \, \alpha \in \Phi, \, x \in \cO\}$.

%We define
%$$w_{\alpha}(x)  :=  u_\alpha(x)u_{-\alpha}(-x^{-1})u_\alpha(x),$$
%for $x\in F^\times$.  Each $w_{\alpha}(x)$ normalizes $T$, and the quotient 
%$$W_G := N_G(T)/T$$ 
%is generated by the (distinct) classes $\{w_{\alpha}(1)T: \, \alpha \in \Pi\}$.

Let $B$ denote the Borel subgroup of $G$ with respect to $\Pi$, and let $U$ denote the unipotent radical of $B$.  Thus $B$ is generated by $T$ and $U_\alpha$ for $\alpha\in \Phi^+$.  We let $B^- = T\ltimes U^-$ denote the opposite Borel subgroup.  More generally, the notation $P = M\ltimes N$ will be used to denote parabolic subgroups of $G$, while $P^- = M\ltimes N^-$ denotes the opposite parabolic.  We say $P = M\ltimes N$ is a \textit{standard parabolic subgroup} if $P$ is a parabolic subgroup containing $B$, $N$ is its unipotent radical contained in $U$, and $M$ is a Levi subgroup containing $T$. Moreover, we say a subgroup of $G$ is a \textit{standard Levi subgroup} if it is the Levi factor of a standard parabolic subgroup.

Standard parabolic subgroups correspond bijectively to subsets of $\Pi$.  For a subset $J\subset \Pi$, we let $P_J = M_J\ltimes N_J$ denote the corresponding standard parabolic subgroup.  Reciprocally, for a standard parabolic subgroup $P = M\ltimes N$, we will denote by $\Pi_M$ the corresponding subset of $\Pi$. For $\lambda \in X_*(T)_-$, we will write $P_{\lambda} = M_{\lambda} \ltimes N_{\lambda}$ for standard parabolic subgroup corresponding to the subset $\{\alpha\in \Pi: \langle \alpha, \lambda \rangle = 0\};$ see \cite{springer:linalggrps}, $\S$13.4.2 and 15.4.4.  We define the \textit{Siegel parabolic subgroup} $P_\s = M_\s\ltimes N_\s$ as the standard parabolic subgroup corresponding to the subset 
$$\Pi_{\s} := \{\alpha_1,\ldots, \alpha_{n - 1}\}.$$  
The Siegel Levi subgroup $M_{\s}$ is isomorphic to $\textnormal{GL}_n(F)$.

We will also need to consider the symplectic similitude group, so we set notation for it here. Let $\textnormal{GSp}_{2n}(F)$ be the group of $F$-points of $\mathbf{GSp}_{2n}$, defined and split over $F$, and chosen such that $\mathbf{Sp}_{2n}\subset\mathbf{GSp}_{2n}$.  The group $\textnormal{GSp}_{2n}(F)$ is the group of linear transformations of a $2n$-dimensional vector space over $F$ which preserve a fixed symplectic form up to a nonzero scalar.  The torus $T$ of $G$ is contained in a (split) maximal torus $T_{G}$ of $\text{GSp}_{2n}(F)$, and we let $\lambda_{n+1}$ denote a fixed cocharacter of $T_G$ such that $\{\lambda_1, \dots, \lambda_n, \lambda_{n+1}\}$ is a $\bbZ$-basis for $X_*(T_{G})$. We then have $\text{GSp}_{2n}(F) \cong \text{Sp}_{2n}(F) \rtimes \lambda_{n+1}(F^\times)$.

%\textcolor{red}{In terms of matrices, we have
%$$\textnormal{GSp}_{2n}(F) = \left\{g\in \textnormal{GL}_{2n}(F): g^\top \begin{pmatrix} & 1_n \\ -1_n & \end{pmatrix}g = z\begin{pmatrix} & 1_n \\ -1_n & \end{pmatrix}~\textnormal{for some}~z\in F^\times\right\}.$$}

%\textcolor{red}{The torus $T$ of $G$ is contained in the diagonal maximal torus of $\text{GSp}_{2n}(F)$, and we let $\lambda_{n + 1}$ denote the cocharacter of the larger torus defined by $\lambda_{n + 1}(x)\cdot e_j = xe_j$ and $\lambda_{n + 1}(x)\cdot e_j^* = e_j^*$ for $1\leq j \leq n$.  The set $\{\lambda_i:1\leq i \leq n + 1\}$ then gives a $\bbZ$-basis for the cocharacter group of the maximal torus of $\text{GSp}_{2n}(F)$, and we have $\text{GSp}_{2n}(F) \cong \text{Sp}_{2n}(F)\rtimes \lambda_{n + 1}(F^\times)$.}

\subsection{Covering groups}\label{cover}

We now discuss covers of $\textnormal{Sp}_{2n}(F)$ and $\textnormal{GSp}_{2n}(F)$.  Our main reference is \cite{gangao:langlandsweissman} (especially $\S$16.1 and $\S$16.3), which in turn is based on \cite{brylinskideligne:extnsbyk2}.

The general framework of \cite{brylinskideligne:extnsbyk2} shows how to construct central extensions of the form
$$1 \longrightarrow \bK_2\longrightarrow \overline{\mathbf{GSp}}_{2n}\longrightarrow\mathbf{GSp}_{2n}\longrightarrow 1$$
as sheaves of groups on the big Zariski site of $\textnormal{Spec}(F)$, where $\bK_2$ is the sheaf of groups associated to $K_2$ in Quillen's $K$-theory.  By Theorem 6.2 of \textit{loc. cit.} (see also $\S$2.5 of \cite{gangao:langlandsweissman}), the category of such extensions is equivalent to a category whose objects are triples $(Q,\cE,f)$, where
$$Q:\bigoplus_{i = 1}^{n + 1}\bbZ\lambda_i\longrightarrow \bbZ$$
is a quadratic form invariant under the Weyl group of $\mathbf{GSp}_{2n}$, $\cE$ is a certain group extension, and $f$ is a certain isomorphism.  The precise definitions of $\cE$ and $f$ are not relevant for our purposes, but let us only mention that for every choice of $Q$ as above, there exist $\cE$ and $f$ for which the triple $(Q,\cE,f)$ satisfies the necessary coherence properties (cf. \cite{gangao:langlandsweissman}, $\S$2.6).

Let $Q$ denote the Weyl-invariant quadratic form defined by
$$Q\left(\sum_{i = 1}^{n + 1}a_i\lambda_i\right) = \sum_{i = 1}^n (a_i^2 + a_ia_{n + 1}).$$
Note in particular that $Q(\alpha_i^\vee) = 2$ if $1\leq i \leq n - 1$ and $Q(\alpha_n^\vee) = 1$.  We fix choices of $\cE$ and $f$ for which the triple $(Q, \cE, f)$ satisfies the relevant coherence conditions, and we let $\overline{\mathbf{GSp}}_{2n}$ denote the associated extension of $\mathbf{GSp}_{2n}$ by $\bK_2$.  Since $\textnormal{H}^1(F,\bK_2) = 0$, upon taking $F$-points we obtain
$$1 \longrightarrow K_2(F)\longrightarrow \overline{\mathbf{GSp}}_{2n}(F)\longrightarrow \textnormal{GSp}_{2n}(F)\longrightarrow 1,$$
and we define the topological group $\widetilde{\textnormal{GSp}}_{2n}(F)$ by pushing out the above exact sequence by the quadratic Hilbert symbol $(-,-)_F:K_2(F)\longrightarrow \mu_2$~:
$$1 \longrightarrow \mu_2\longrightarrow \widetilde{\textnormal{GSp}}_{2n}(F)\longrightarrow \textnormal{GSp}_{2n}(F)\longrightarrow 1.$$

We shall need a more explicit description of the maximal torus of $\widetilde{\textnormal{GSp}}_{2n}(F)$.  Fix two cocharacters $\lambda,\lambda'$ of the torus of $\textnormal{GSp}_{2n}(F)$, and let $x,y\in F^\times$.  Letting $\widetilde{\lambda}(x)$ and $\widetilde{\lambda}'(y)$ denote arbitrary lifts of $\lambda(x), \lambda'(y)$ to $\widetilde{\textnormal{GSp}}_{2n}(F)$, we obtain the following commutator formula (cf. \cite{gangao:langlandsweissman}, $\S$3.3):
\begin{equation}\label{comm}
\left[\widetilde{\lambda}(x),\widetilde{\lambda}'(y)\right] = (x,y)_F^{Q(\lambda + \lambda')-Q(\lambda) - Q(\lambda')}.
\end{equation}
In particular, if $\lambda = \sum_{i = 1}^n a_i\lambda_i$ and $\lambda' = \lambda_{n + 1}$, we obtain
\begin{equation}\label{gspconj}
\left[\widetilde{\lambda}(x),\widetilde{\lambda}'(y)\right] = (x,y)_F^{\sum_{i = 1}^n a_i},
\end{equation}
while if $\lambda = \sum_{i = 1}^n a_i \lambda_i$ and $\lambda' = \sum_{i = 1}^n a_i' \lambda_i$, then
\begin{equation}\label{spcomm}
\left[\widetilde{\lambda}(x), \widetilde{\lambda}'(y)\right] = (x, y)_F^{\sum_{i = 1}^n 2a_i a_i'} = 1.
\end{equation}

Finally, we define $\tG = \widetilde{\textnormal{Sp}}_{2n}(F)$ to be the pullback to $G$ of the cover $\widetilde{\textnormal{GSp}}_{2n}(F)\longrightarrow \textnormal{GSp}_{2n}(F)$:

\centerline{
\xymatrix{
1\ar[r] & \mu_2\ar[r]&  \widetilde{\textnormal{GSp}}_{2n}(F) \ar[r]\ar@{}[dr]|{\square} & \textnormal{GSp}_{2n}(F)\ar[r] & 1\\
1\ar[r] & \mu_2\ar[r]\ar@{=}[u] & \tG \ar^{\pr}[r]\ar[u] & G\ar[r]\ar[u] & 1
}
} 
\noindent Here, $\pr:\tG\longrightarrow G$ denotes the projection map, which is continuous and open.  We will identify $\mu_2$ with its image in $\tG$ via the injection above.

The group $\tG$ is isomorphic to the classical metaplectic double cover constructed by Weil (see $\S$16 of \cite{gangao:langlandsweissman}, $\S$4.3 of \cite{weissman:metaplecticlgps}, or Proposition 4.15 of \cite{brylinskideligne:extnsbyk2}).  When necessary, we will work with coordinates on $\tG$ as follows: as a set, we identify $\tG$ with $G\times \mu_2$, with product given by $(g, \zeta)\cdot (g', \zeta') = (gg', \sigma(g, g')\zeta\zeta')$.  Here, $\sigma$ denotes Rao's cocycle (cf. \cite{rao:weilrepn}), whose image in $\textnormal{H}^2(G,\mu_2)\cong \mu_2$ gives the nontrivial class.  In particular, if $m, m'\in M_\s$, we have
\begin{equation}\label{cocycleMs}
(m,\zeta)\cdot(m',\zeta') = \left(mm', (\sideset{}{_{M_\s}}\det(m), \sideset{}{_{M_\s}}\det(m'))_F\zeta\zeta'\right)
\end{equation}
(cf. \textit{loc. cit.}, Lemma 5.1(iii) and Corollary 5.5(2)).

We state one final fact, which is implicit in many calculations below.  Let $g,g'\in G$, and let $\widetilde{g}, \widetilde{g}'\in \tG$ be arbitrary choices of lifts.  By 2.II.1 Proposition and 2.II.5 Lemme of \cite{mvw:howebook}, the elements $g$ and $g'$ commute in $G$ if and only if $\widetilde{g}$ and $\widetilde{g}'$ commute in $\tG$.  Note that this is not true for $\textnormal{GSp}_{2n}(F)$ and its double cover (cf. formula \eqref{comm}).

\subsection{Splittings and distinguished elements}
\label{splittings}

The cover $\tG \longrightarrow G$ splits over certain subgroups of $G$, as follows.  If $N$ is any unipotent subgroup of $G$, then $\S$3.2 of \cite{gangao:langlandsweissman} shows that the extension splits uniquely over $N$.  Likewise, $\S$4.4 of \textit{loc. cit.} shows that the extension also splits uniquely over the maximal compact subgroup $K$ (our assumption that $p \neq 2$ is necessary here).  We remark that one may prove these results in a much more ``hands-on'' way by computing $\textnormal{H}^1(-,\mu_2)$ and $\textnormal{H}^2(-,\mu_2)$ (and utilizing $\S$11 of \cite{moore:gpextensions}).  Similar cohomological arguments show that the extension splits uniquely over $N\cap K$, and therefore the splittings over $N$ and $K$ must agree on $N\cap K$.

If $H$ is any closed subgroup of $G$, we will denote by 
$$\tH := \pr^{-1}(H)$$ 
the preimage of $H$ in $\tG$.  If $H$ is either $K$ or a unipotent subgroup of $G$ (or the intersection thereof), we will write $H^*$ for the image of $H$ under its unique splitting, which gives $\tH \cong H^*\times \mu_2$.  If $H$ is some closed subgroup of $K$ (for example, $M\cap K$ for some standard Levi $M$), we will use $H^*$ to denote the image of $H$ in $K^*$ by the splitting over $K$ (note that the splitting over such an $H$ is not unique in general).  Additionally, the factorization $P = M\ltimes N$ of a standard parabolic subgroup lifts to $\tP = \tM\ltimes N^*$, and we refer to $\tP$ as a standard parabolic subgroup of $\tG$.

Let $\alpha\in \Phi$.  Since the cover $\tG\longrightarrow G$ splits uniquely over $U_\alpha$, we may compose the root morphism $u_\alpha$ with the splitting to obtain a homomorphism
$$\widetilde{u}_\alpha:F\stackrel{\sim}{\longrightarrow} U_\alpha^*\subset \tU_\alpha.$$
Given this, for $\alpha \in \Phi$ and $x \in F^\times$ we define
$$\widetilde{\alpha}^\vee(x)  :=  \widetilde{u}_{\alpha}(x)\widetilde{u}_{-\alpha}(-x^{-1})\widetilde{u}_{\alpha}(x - 1)\widetilde{u}_{-\alpha}(1)\widetilde{u}_{\alpha}(-1).$$
The element $\widetilde{\alpha}^\vee(x)\in \tT$ is a preimage of $\alpha^\vee(x)\in T$.  Note that $\widetilde{\alpha}^\vee$ is \emph{not} a homomorphism in general.

Since the group $G$ is simply connected, we have $X_*(T) = \bigoplus_{i = 1}^n\bbZ\alpha_i^\vee$.  Given $\lambda\in X_*(T)$, we write $\lambda = \sum_{i = 1}^n a_i\alpha_i^\vee$, and define
$$\widetilde{\lambda}(\varpi) := \prod_{i = 1}^n\widetilde{\alpha}_i^\vee(\varpi)^{a_i}.$$
The element $\widetilde{\lambda}(\varpi)\in \tT$ is a preimage of $\lambda(\varpi)\in T$.  Moreover, since $\tT$ is commutative (cf. equation \eqref{spcomm}), the map $\lambda\longmapsto \widetilde{\lambda}(\varpi)$ is an injective homomorphism which induces an isomorphism $X_*(T) \stackrel{\sim}{\longrightarrow}\tT/(\tT\cap \tK)$.

\subsection{Representations and weights}
\label{repns}

All representations will have coefficients in $\fpb$ unless noted otherwise. Let $H$ be any closed subgroup of $G$ or of $\tG$, and $V$ a representation of $H$.  The representation $V$ is called \textit{smooth} if for every vector $v \in V$ the stabilizer of $v$ is an open subgroup of $H$.  All representations appearing will be assumed to be smooth.  We denote by $V^H$ (resp., $V_H$) the subspace of invariants (resp., coinvariants) under the action of $H$.  We say $V$ is \textit{admissible} if $V^J$ is finite dimensional over $\fpb$ for every open subgroup $J$ of $H$.  Finally, let $\varepsilon:\mu_2 \longrightarrow \fpb^\times$ denote the nontrivial character of $\mu_2$.  If $H$ is any closed subgroup of $G$ and $\tH$ its preimage in $\tG$, then a representation $V$ of $\tH$ is called \textit{genuine} if $\zeta \cdot v = \varepsilon(\zeta)v$ for all $\zeta \in \mu_2$.

A \textit{weight of $K$} is an irreducible representation of $K$, and a \textit{weight of $\tK$} is a genuine irreducible representation of $\tK$. Every weight of $\tK$ is of the form $\tV = V \boxtimes \varepsilon$, where $V$ is a weight of $K$ viewed as a representation of $K^*$ via the canonical splitting of the extension over $K$.

Let $X_q(T)$ denote the set of $q$-restricted weights of $T$, i.e., 
$$X_q(T) := \{\chi \in X^*(T): \, 0 \leq \langle \chi, \alpha^{\vee} \rangle < q \textnormal{ for all }  \alpha \in \Pi\},$$
and also define
$$X^0(T) := \{\chi \in X^*(T): \, \langle \chi, \alpha^{\vee}\rangle = 0 \textnormal{ for all } \alpha \in \Pi\}.$$
Given a character $\nu \in X^*(T)$ which satisfies $\langle\nu,\alpha^\vee\rangle \geq 0$ for all $\alpha\in \Pi$, we let $F(\nu)$ denote the associated algebraic representation of $G(\overline{\kk})$ of highest weight $\nu$.  By Proposition 2.2 of \cite{herzig:modpgln}, every weight of $\tK$ is equal to $\tF(\nu) := F(\nu) \boxtimes \varepsilon$ for some $\nu \in X_q(T)$, where we view $F(\nu)$ as a representation of $K$ via $K\longtwoheadrightarrow G(\kk)\longhookrightarrow G(\overline{\kk})$.  Moreover, $\tF(\nu) \cong \tF(\nu')$ if and only if $\nu - \nu' \in (q-1)X^0(T)$.  If $M$ is a standard Levi subgroup of $G$, then we will refer to genuine irreducible representations of $\tM \cap \tK$ as \textit{weights of $\tM \cap \tK$}, or just \textit{weights of $\tM$.} We denote the analogs of $X_q(T)$, $X^0(T)$, and $\tF(\nu)$ by $X^M_q(T)$, $X^0_M(T)$, and $\tF_{\tM}(\nu)$, respectively.

Finally, for $\nu\in X_q(T)$, we set 
$$\Pi_\nu := \{\alpha\in \Pi: \langle\nu,\alpha^\vee\rangle = 0\},$$
and given a standard Levi subgroup $\tM$, we say $\tV = \tF(\nu)$ is \textit{$\tM$-regular} if $\Pi_\nu\subset \Pi_M$.

Fix now two standard parabolic subgroups $P\subset Q$ of $G$, and let $M\subset L$ denote their Levi subgroups.  Let $N_1$ denote the intersection of $L$ with the unipotent radical of $P$, so that $\tM\ltimes N_1^{*}$ is a parabolic subgroup of $\tL$.  We then have the following lemma.

\begin{lemma}
\label{weights}
Let $\tV$ be a weight of $\tL$ and let $\tM \ltimes N_1^*$ be a standard parabolic of $\tL$. Then:
\begin{enumerate} 
\item $\tV^{(N_1 \cap K)^*}$ and $\tV_{(N_1^{-} \cap K)^*}$ are weights of $\tM$;
\item the natural, $\tM \cap \tK$-linear map $\tV^{(N_1 \cap K)^*} \longrightarrow \tV_{(N_1^{-} \cap K)^*}$ is an isomorphism;
\item $\tV^{(U \cap L \cap K)^*} \cong \tV_{(U^{-} \cap L \cap K)^*}$ is one-dimensional;
\item if $\tV = \tF_{\tL}(\nu)$ for some $\nu \in X_q^L(T)$, then $\tV^{(N_1 \cap K)^*} \cong \tF_{\tM}(\nu)$.
\end{enumerate}
\end{lemma}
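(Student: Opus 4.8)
The plan is to reduce everything to the corresponding statements for the reductive group $L$ (respectively its Levi $M$), since the covering is split over the compact groups involved. Concretely, by the discussion in $\S$\ref{splittings}, the cover $\tG \to G$ splits uniquely over $K$, hence over $L \cap K$ and over the unipotent subgroups $N_1 \cap K$, $N_1^- \cap K$; writing $H^*$ for the image of $H$ under this splitting, we get $\tV \cong V \boxtimes \varepsilon$ for a genuine weight $\tV$ of $\tL$, where $V$ is a weight of $L$ viewed on $(L\cap K)^*$ via the splitting. Since $\mu_2$ is central and acts by the character $\varepsilon$ on all of $\tV$, taking $(N_1\cap K)^*$-invariants (resp. $(N_1^-\cap K)^*$-coinvariants) commutes with the factorization: $\tV^{(N_1\cap K)^*} \cong V^{N_1 \cap K}\boxtimes \varepsilon$ and $\tV_{(N_1^-\cap K)^*}\cong V_{N_1^-\cap K}\boxtimes\varepsilon$, as representations of $(M\cap K)^* \times \mu_2 = \tM\cap\tK$. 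Thus each of (1)--(4) follows from the analogous purely reductive statement applied to $V$.

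For the reductive input I would cite the standard facts about weights of $L(\kk)$. For (3), the invariants of an irreducible $L(\kk)$-representation under a maximal unipotent subgroup (here $(U\cap L\cap K)$, which surjects onto a maximal unipotent of $L(\kk)$) form the highest-weight line and are one-dimensional; dually the coinvariants under the opposite unipotent are the lowest-weight line, also one-dimensional, and these two lines are identified under the natural composite $V^{U\cap L\cap K}\hookrightarrow V \twoheadrightarrow V_{U^-\cap L\cap K}$ (nondegeneracy of this pairing is classical for irreducible modular representations in defining characteristic). For (4): if $V = F_L(\nu)$ with $\nu \in X_q^L(T)$, then $F_L(\nu)^{N_1\cap K}$ is, by the theory of parabolic subgroups of algebraic groups in characteristic $p$ (e.g. the argument in \cite{herzig:modpgln}, around Lemma 2.3, or \cite{herzig:modpsatake}), the irreducible $M(\kk)$-module $F_M(\nu)$ of highest weight $\nu$; this is exactly the statement that taking $N_1$-invariants of the highest-weight module is "the functor of taking highest weight with respect to the smaller Levi." Statement (1) is then immediate: $\tV^{(N_1\cap K)^*}$ is a nonzero $\tM\cap\tK$-stable subspace, and the reductive fact (4) (or its analog for a general weight, reducing to the Jordan--H\"older constituents, all of which are of the form $F_L(\nu)$) shows it is irreducible, hence a weight of $\tM$; likewise for the coinvariants. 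For (2), the natural map $V^{N_1\cap K} \to V_{N_1^-\cap K}$ is $M\cap K$-linear; it is injective because $V^{N_1\cap K}\cap N_1^-V$... more precisely, one shows the composite $V^{N_1\cap K}\hookrightarrow V\twoheadrightarrow V_{N_1^-\cap K}$ is an isomorphism by comparing highest/lowest weight lines, or by noting both sides are irreducible $M$-modules of the same highest weight and the map is nonzero (nonzero because on the highest-weight line of (3) it agrees with the isomorphism there), hence an isomorphism by Schur. Tensoring with $\varepsilon$ throughout gives the genuine statements.

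The main obstacle I anticipate is purely bookkeeping rather than conceptual: one must check that the various splittings (over $K$, over $N_1$, over $N_1^-$, over $N_1\cap K$, etc.) are \emph{compatible}, i.e. that the splitting over $K$ restricted to $N_1\cap K$ agrees with the restriction to $N_1\cap K$ of the unique splitting over $N_1$. This is exactly the uniqueness remark already made in $\S$\ref{splittings} (the splitting over $N_1\cap K$ is unique, so any two splittings extending it to larger groups restrict to it), so it should cost at most a sentence. The only other point requiring care is that in (2)--(3) the subgroups $N_1 \cap K$, $U\cap L\cap K$ surject onto the \emph{full} unipotent radical / maximal unipotent of the finite group $L(\kk)$ (resp. $M(\kk)$); this follows from our assumption that $K$ is generated by the $u_\alpha(\cO)$ together with the reduction-mod-$\varpi$ map $K \twoheadrightarrow G(\kk)$. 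With these compatibilities in hand, the lemma is a translation of well-known facts about weights of finite reductive groups in defining characteristic, and I would write it as such, referring to \cite{herzig:modpgln} (Proposition 2.2, Lemma 2.3 and their proofs) and \cite{herzig:modpsatake} for the reductive statements.
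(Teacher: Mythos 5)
Your proposal is correct and follows the same route as the paper: identify the underlying vector spaces of $\tV$ and $V$ via the splitting over $K$ (so $\tV \cong V \boxtimes \varepsilon$, with $\mu_2$ acting by a character, hence invariants and coinvariants pass through the factorization) and then invoke the reductive statements of \cite{herzig:modpgln}, Lemma 2.3. The paper records exactly this reduction in one line, while you additionally spell out the underlying reductive arguments and the compatibility of splittings, which is fine but not needed beyond the citation.
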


\begin{proof}
This follows from \cite{herzig:modpgln}, Lemma 2.3 by identifying the underlying vector spaces of $\tV$ and $V$.  
\end{proof}

Now let $\tH\subset \tL$ denote two closed subgroups of $\tG$ which both contain the image of $\mu_2$ in $\tG$.  We let $\Ind_{\tH}^{\tL}(-)$ and $\ind_{\tH}^{\tL}(-)$ denote the functors of induction and compact induction, respectively, from the category of smooth $\tH$-representations to the category of smooth $\tL$-representations. These functors preserve genuineness. If $\tH$ is an open subgroup of $\tL$, then the functor $\ind_{\tH}^{\tL}(-)$ is exact, and Frobenius reciprocity gives an isomorphism $\Hom_{\tL}(\ind_{\tH}^{\tL}(\sigma), \pi) \cong \Hom_{\tH}(\sigma, \pi \big \vert_{\tH})$, natural in both arguments.

Assume now that $\tL$ is a standard Levi subgroup of $\tG$ and $\tH$ is a standard parabolic subgroup of $\tL$ with Levi subgroup $\tM$.  We view smooth representations of $\tM$ as smooth representations of $\tH^-$ via inflation. The functor $\Ind_{\tH^-}^{\tL}(-)$ from smooth $\tM$-representations to smooth $\tL$-representations is then an exact functor which preserves admissibility (cf. \cite{vigneras:rightadjoint}, Propositions 4.3 and 4.8).  We also have the functor of ordinary parts $\Ord_{\tH}^{\tL}(-)$, from the category of smooth $\tL$-representations to the category of smooth $\tM$-representations (cf. \textit{loc. cit.}).  This functor preserves admissibility and genuineness, and is a right adjoint to $\Ind_{\tH^-}^{\tL}(-)$ (\textit{loc. cit.}, Corollary 8.3).  Moreover, Corollary 8.4 of \textit{loc. cit.} shows that we have a functorial isomorphism $\sigma \cong \textnormal{Ord}_{\tH}^{\tL}\circ\Ind_{\tH^-}^{\tL}(\sigma)$ for any admissible genuine $\tM$-representation $\sigma$.

We adopt similar notations for subgroups of $G$.

\subsection{Subgroup decompositions in $\tG$}

We will need to make use of several subgroup decompositions in $\tG$.  In general, these are deduced from similar decompositions in the group $G$ using the map $\pr^{-1}$.  For instance, given a standard parabolic subgroup $P$ of $G$, we may lift the Iwasawa decomposition $G = P^-K$ via $\pr^{-1}$ to obtain
$$\tG = \tP^-\tK = \tP^-K^*.$$
Likewise the refined Cartan decomposition $G = \bigsqcup_{\lambda\in X_*(T)_-}K\lambda(\varpi)K$ lifts to
$$\tG = \bigsqcup_{\lambda\in X_*(T)_-}\tK\widetilde{\lambda}(\varpi)\tK,$$
and one may further refine this decomposition to
$$\tG = \bigsqcup_{\sub{\lambda\in X_*(T)_-}{\zeta\in\mu_2}}K^*\widetilde{\lambda}(\varpi)\cdot\zeta K^*$$
(note that the disjointness of the last union is implied by Theorem 9.2 of \cite{mcnamara:psreps}, whose proof remains valid in our situation).

\subsection{Recollections on the Satake map}
\label{subsec:satake}

We recall some basic properties of Satake maps.  Since the groups $\tG$, $\tP^-$, $\tK$, and the relevant Levi subgroups satisfy axioms (A1), (A2), 2.5(i), 2.5(ii), (C1), (C2), and the assumptions of $\S$2.8 of \cite{henniartvigneras:modpsatake}, we may apply the general formalism of $\S$2 of \textit{loc. cit.} and $\S$2 of \cite{henniartvigneras:cptparabolic}.

Fix two standard parabolic subgroups $P\subset Q$ of $G$, and let $M\subset L$ denote their respective Levi factors.  We again let $N_1$ denote the intersection of $L$ with the unipotent radical of $P$, so that $\tM\ltimes N_1^{*}$ is a parabolic subgroup of $\tL$.  We also fix a weight $\tV$ of $\tL\cap \tK$, and define
$$\cH_{\tL}(\tV):= \End_{\tL}\left(\ind_{\tL\cap\tK}^{\tL}(\tV)\right).$$
As in $\S$2 of \cite{henniartvigneras:cptparabolic}, we identify elements of $\cH_{\tL}(\tV)$ with functions $\varphi:\tL\longrightarrow \End_{\fpb}(\tV)$ which satisfy 

\begin{center}
\begin{itemize}
\item $\varphi(k_1\ell k_2) = k_1\circ \varphi(\ell)\circ k_2$, where $k_i\in \tL\cap \tK, \ell\in \tL$, and where the $k_i$ appearing on the right-hand side denote the corresponding automorphism of $\tV$;
\item the support of $\varphi$ is compact.  
\end{itemize}
\end{center}
The product structure on (this realization of) $\cH_{\tL}(\tV)$ is given by convolution.  By the refined Cartan decomposition, a function $\varphi\in\cH_{\tL}(\tV)$ is determined by its values on the elements $\widetilde{\lambda}(\varpi)$, where $\lambda\in X_*(T)_{L,-}:=\{\lambda\in X_*(T): \langle\alpha, \lambda\rangle\leq 0~\textnormal{for all}~\alpha\in \Pi_L\}$.  

By $\S$2 of \textit{loc. cit.}, we have a \textit{Satake morphism}
$$\cS_{\tL}^{\tM}:\cH_{\tL}(\tV)\longrightarrow \cH_{\tM}(\tV_{(N_1^-\cap K)^*}),$$
which, by Proposition 2.2 of \textit{loc. cit.}, is given by
\begin{equation}\label{satakeeq}
\cS_{\tL}^{\tM}(\varphi)(m)(\overline{v}) = \sum_{n\in(N_1^-\cap K)^*\backslash N_1^{-,*}}\overline{\varphi(nm)(v)},
\end{equation}
where $\varphi\in \cH_{\tL}(\tV), m\in \tM, v\in \tV$, and the bar indicates the projection $\tV\longtwoheadrightarrow \tV_{(N_1^-\cap K)^*}$.  The map $\cS_{\tL}^{\tM}$ preserves the convolution product, and is therefore an algebra homomorphism.

The necessary properties of the algebras $\cH_{\tL}(\tV)$ and the map $\cS_{\tL}^{\tM}$ are summarized in the following proposition.

\begin{propn}\label{satakeprop}\hfill
\begin{enumerate}
\item Fix $\lambda\in X_*(T)_{L,-}$.  Then the space of functions in $\cH_{\tL}(\tV)$ supported on $(\tL\cap \tK)\widetilde{\lambda}(\varpi)(\tL\cap\tK)$ is one-dimensional.  A generator for this space is given by $T_\lambda^{\tL}$, where $T_\lambda^{\tL}(\widetilde{\lambda}(\varpi))$ is the map 
$$\tV\longtwoheadrightarrow \tV_{(N_{-\lambda}^-\cap K)^*} \stackrel{\sim}{\longleftarrow} \tV^{(N_{-\lambda}\cap K)^*}\longhookrightarrow \tV.$$
This map is a projection.  
\item If $\tL_1\subset\tL_2\subset\tL_3$ are standard Levi subgroups corresponding to standard parabolic subgroups $\tP_1\subset\tP_2\subset\tP_3$, we then have 
$$\cS_{\tL_3}^{\tL_1} = \cS_{\tL_2}^{\tL_1}\circ\cS_{\tL_3}^{\tL_2}.$$
\item Fix an element $\lambda_0\in X_*(T)$ which satisfies $\langle\alpha,\lambda_0\rangle<0$ for $\alpha\in \Pi_L\smallsetminus\Pi_M$ and $\langle\alpha,\lambda_0\rangle = 0$ for $\alpha\in \Pi_M$.  Then the map $\cS_{\tL}^{\tM}:\cH_{\tL}(\tV)\longrightarrow \cH_{\tM}(\tV_{(N_1^-\cap K)^*})$ is a localization at $T_{\lambda_0}^{\tM}$; that is, $\cS_{\tL}^{\tM}$ is injective, $T_{\lambda_0}^{\tM}\in \textnormal{im}(\cS_{\tL}^{\tM})$ is central and invertible, and 
$$\cH_{\tM}(\tV_{(N_1^-\cap K)^*}) = \bigcup_{n\in \bbZ_{\geq 0}}\cS_{\tL}^{\tM}\left(\cH_{\tL}(\tV)\right)\cdot(T_{\lambda_0}^{\tM})^{-n}.$$
\item The algebra $\cH_{\tL}(\tV)$ is commutative.  
\item Assume $\tM = \tT$.  Then the image of the map $\cS_{\tL}^{\tT}$ is supported in the union of cosets of the form $(\tT\cap\tK)\widetilde{\lambda}(\varpi)(\tT\cap\tK)$ for $\lambda\in X_*(T)_{L,-}$.  More precisely, for $\lambda\in X_*(T)_{L,-}$, we have
$$\cS_{\tL}^{\tT}(T_\lambda^{\tL}) = \sum_{\sub{\mu\in X_*(T)_{L,-}}{\mu\geq_L \lambda}} c_{\lambda}(\mu)\tau_\mu,$$
where $c_{\lambda}(\mu)\in \fpb, c_{\lambda}(\lambda) = 1$, and $\leq_L$ is the partial order on $X_*(T)$ defined by $\Pi_L$.  
\item Assume $\tL = \tG$, $\lambda\in X_*(T)_-$, and $\tV = \tF(\nu)$ is a weight of $\tK$ which is $\tM$-regular.  Suppose that either $\Pi_\nu = \Pi_M$, or that $\tV$ is $\tM_{\lambda}$-regular.  We then have
$$\cS_{\tG}^{\tM}(T_\lambda^{\tG}) = T_\lambda^{\tM}.$$
\end{enumerate}
\end{propn}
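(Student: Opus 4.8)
The plan is to prove each part of Proposition~\ref{satakeprop} by importing the corresponding statement from the reductive setting, using the fact (stated in \S\ref{subsec:satake}) that the groups $\tG$, $\tP^-$, $\tK$, and the relevant Levi subgroups satisfy the axioms of \cite{henniartvigneras:modpsatake} and \cite{henniartvigneras:cptparabolic}, together with Lemma~\ref{weights} which lets us pass between weights of $\tM$ and the underlying weights of $M$. Concretely, for parts (1)--(5) I would cite the general results of \cite{henniartvigneras:cptparabolic}: part (1) is the description of the standard generators $T_\lambda^{\tL}$ as in \S2 of \emph{loc.\ cit.} (using Lemma~\ref{weights}(2) to see that the displayed composite is well-defined and a projection); part (2) is the transitivity of the Satake morphism, which follows from the integration formula \eqref{satakeeq} by factoring $N_1^-$ appropriately; part (3) is the localization statement, Theorem~1.2 (or the analogous numbered result) of \cite{henniartvigneras:cptparabolic}; part (4), commutativity, follows from (3) together with the commutativity of $\cH_{\tT}(\tV_{(U^-\cap K)^*})$, which is immediate since $\tT/(\tT\cap\tK)\cong X_*(T)$ is abelian by \S\ref{splittings}; and part (5) is the support and leading-term computation in \cite{henniartvigneras:cptparabolic}, where the crucial input is the Bruhat-type ordering coming from the refined Cartan decomposition in $\tG$ recorded in \S2.6. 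In each case the only genuinely metaplectic point is to check that the relevant decomposition of $\tG$ (Iwasawa, refined Cartan) is the pullback of the one for $G$, which is exactly what \S\ref{subsec:satake} arranged.

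The substance is in part (6), the explicit formula $\cS_{\tG}^{\tM}(T_\lambda^{\tG}) = T_\lambda^{\tM}$. Here I would follow the strategy of \cite{herzig:modpgln}, Corollary~4.4 (and its refinement in \cite{herzig:modpsatake}), adapted to the cover. First, by part (2) applied to the chain $\tT\subset\tM\subset\tG$ and by part (5), it suffices to compute $\cS_{\tG}^{\tT}(T_\lambda^{\tG})$ and to show that, under the $\tM$-regularity hypothesis, the only term that survives is $\tau_\lambda$ (which by part (5) for $\tL=\tM$ pulls back from $T_\lambda^{\tM}$). So the problem reduces to: for $\mu\in X_*(T)_-$ with $\mu\geq\lambda$ and $\mu\neq\lambda$, show that the coefficient $c_\lambda(\mu)$ in $\cS_{\tG}^{\tT}(T_\lambda^{\tG})=\sum_\mu c_\lambda(\mu)\tau_\mu$ vanishes \emph{when restricted to the $\tM$-relevant cosets}, i.e.\ that $\cS_{\tM}^{\tT}(\cS_{\tG}^{\tM}(T_\lambda^{\tG}))$ has the claimed shape. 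The computation of $c_\lambda(\mu)$ is a sum over $(U^-\cap K)^*$-orbits in $U^{-,*}$ of the value of $T_\lambda^{\tG}$, exactly as in the reductive case; the mod-$p$ vanishing comes from the fact that, for $\mu$ strictly above $\lambda$, the relevant orbit count is divisible by $p$ unless $\mu$ is "too small" to be relevant for $\tM$, which is where the regularity hypothesis ($\Pi_\nu=\Pi_M$ or $\tV$ is $\tM_\lambda$-regular) enters.

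The mechanism for the $p$-divisibility is Herzig's dimension argument: the coefficient $c_\lambda(\mu)$ can be identified (up to a unit) with a certain weight multiplicity, namely $\dim \tV(\mu-\lambda)$ or an Euler characteristic thereof, in the algebraic representation $\tF(\nu)$, and one uses that these dimensions are computed by Weyl's character formula / the Jantzen sum formula and reduce mod $p$ appropriately. Since $\tV=\tF(\nu)=F(\nu)\boxtimes\varepsilon$ and the $\varepsilon$-factor is inert under all of these operations, the entire weight-theoretic computation is literally the one in $F(\nu)$ as a representation of $G(\overline\kk)$; this is where Lemma~\ref{weights}(4) is used, to identify $\tV^{(N_1\cap K)^*}$ with $\tF_{\tM}(\nu)$ and hence $T_\lambda^{\tM}(\widetilde\lambda(\varpi))$ acting on the correct space. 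I would therefore structure the proof of (6) as: (i) reduce to the torus via (2) and (5); (ii) quote the reductive computation of \cite{herzig:modpgln} Cor.~4.4 / \cite{herzig:modpsatake} verbatim on underlying vector spaces; (iii) observe that the two regularity hypotheses are precisely the two cases handled there (one via $\Pi_\nu=\Pi_M$, one via the "$w_0$-twist is still $\tM_\lambda$-regular" trick), and that genuineness is preserved at every step because $\varepsilon$ is a central character that factors through all the relevant quotients.

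\medskip

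The step I expect to be the main obstacle is part (6)(iii): verifying that the $\tM_\lambda$-regularity hypothesis in our statement matches exactly the hypothesis under which Herzig's formula holds, once one traces through the metaplectic bookkeeping. The potential subtlety is that in the cover, the parabolic $\tP_\lambda$ attached to $\lambda$ and the extension datum $M(\sigma)$ interact with the quadratic form $Q$ of \S\ref{cover}; one must check that the Weyl-group combinatorics governing which $\mu\geq\lambda$ are "$\tM$-relevant" is unchanged, i.e.\ that the refined Cartan decomposition indexed by $X_*(T)_-$ (not some $Q$-twisted sublattice) is the correct one. This is exactly the content of the refinement $\tG=\bigsqcup K^*\widetilde\lambda(\varpi)\zeta K^*$ recorded in \S2.6 and of equation \eqref{spcomm} showing $\tT$ is commutative, so in the end there is no new difficulty --- but the check is the one place where a purely formal "cite the reductive case" is not quite enough, and one has to confirm that the cover does not perturb the weight-multiplicity computation that forces $c_\lambda(\mu)\equiv 0\bmod p$.
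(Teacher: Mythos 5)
For parts (1)--(5) your proposal is essentially the paper's proof: the paper likewise disposes of these by citing \cite{henniartvigneras:modpsatake} ($\S$7.3 and $\S$2.8) and \cite{henniartvigneras:cptparabolic} (Propositions 2.3 and 4.5), and obtains (4) from (3) plus commutativity of $\cH_{\tT}(\tV_{(U^-\cap L\cap K)^*})$, exactly as you do.

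For part (6), however, your argument has a genuine gap, and it sits precisely at the point you flag as ``in the end there is no new difficulty.'' You reduce to the full Satake transform to the torus and then assert that the metaplectic coefficients $c_\lambda(\mu)$ in $\cS_{\tG}^{\tT}(T_\lambda)=\sum_\mu c_\lambda(\mu)\tau_\mu$ are ``literally'' the reductive ones because $\tV=F(\nu)\boxtimes\varepsilon$ and ``the $\varepsilon$-factor is inert under all of these operations.'' This is false in general. The refined Cartan decomposition of $\tG$ is $\bigsqcup K^*\widetilde{\lambda}(\varpi)\cdot\zeta K^*$ with $\zeta\in\mu_2$, and a genuine weight sees the sign $\zeta$; consequently the metaplectic coefficients are \emph{signed} counts of cosets (schematically $|\tS_{\mu,\lambda,1}|-|\tS_{\mu,\lambda,-1}|$ rather than $|S_{\mu,\lambda}|$), and they can differ from, and even vanish while, the reductive coefficients are nonzero. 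This is exactly what Lemma \ref{setsize}, Corollary \ref{cor:oddsum}, and Proposition \ref{samecoeffs} are for, and the discrepancy is real: Proposition \ref{propn:st2l} gives $\cS_{\tG}^{\tT}(T_{2\lambda})=\tau_{2\lambda}$ for the long simple root, whereas the reductive analogue is $\tau_{2\lambda}-\tau_{2\lambda+\alpha_n^\vee}$ --- this divergence is the engine of the whole change-of-weight theorem (Theorem \ref{thm:changeofwt}). So ``quote the reductive torus-level computation verbatim'' cannot be justified by genuineness alone; one would have to redo the sign bookkeeping (as the paper does in $\S$2.7 for the special cases it needs), and your appeal to a Weyl-character-formula/Jantzen-sum-formula mechanism is in any case not what the cited result rests on.

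The paper avoids all of this for part (6) by citing \cite{herzig:modpgln}, Corollary 2.18 \emph{directly for the partial transform} $\cS_{\tG}^{\tM}$: Herzig's argument is a support analysis showing that, under the regularity hypotheses ($\Pi_\nu=\Pi_M$ or $\tM_\lambda$-regularity), only the leading double coset contributes, and vanishing statements are insensitive to the $\mu_2$-signs while the leading term $n=1$, $\mu=\lambda$ carries no sign at all; the operator $T_\lambda(\widetilde{\lambda}(\varpi))$ and the weight are identified with their reductive counterparts via the canonical splittings over $K$ and the unipotent subgroups. If you want to salvage your route through the torus, you would need to argue that under the regularity hypotheses every $\mu\neq\lambda$ term of $\cS_{\tG}^{\tT}(T_\lambda)$ that could contribute to $\cS_{\tG}^{\tM}(T_\lambda)$ vanishes for reasons independent of the signs --- which is essentially reconstructing Herzig's support argument anyway --- rather than asserting equality of metaplectic and reductive coefficients, which the paper itself shows fails.
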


\begin{proof}
(1)  See \cite{henniartvigneras:modpsatake}, $\S$7.3, Lemma 1.

(2)  See \cite{henniartvigneras:modpsatake}, $\S$2.8, Proposition and \cite{henniartvigneras:cptparabolic}, Proposition 2.3.

(3) See \cite{henniartvigneras:cptparabolic}, Proposition 4.5.

(4)  This follows from part (3), noting that $\cH_{\tT}(\tV_{(U^-\cap L \cap K)^*})$ is commutative.

(5)  See \cite{henniartvigneras:modpsatake}, $\S$7.3, Lemmas 2 and 3, applied to the case of coinvariants (see also \cite{henniartvigneras:cptparabolic}, Proposition 2.3).  

(6)  See \cite{herzig:modpgln}, Corollary 2.18.  
\end{proof}

\begin{notation}
We will denote $T_\lambda^{\tG}$ and $T_{\lambda}^{\tT}$ by $T_\lambda$ and $\tau_{\lambda}$, respectively.  
\end{notation}

\begin{remark}
\label{rem:mixedsat}
We also have a ``mixed'' version of the above construction.  Let $\tV'$ be another weight of $\tL\cap\tK$, and consider the module of intertwiners
$$\cH_{\tL}(\tV,\tV') := \Hom_{\tL}\left(\ind_{\tL\cap\tK}^{\tL}(\tV), \ind_{\tL\cap\tK}^{\tL}(\tV')\right).$$
As with $\cH_{\tL}(\tV)$, we may view elements of $\cH_{\tL}(\tV,\tV')$ as compactly supported, $\tL\cap\tK$-biequivariant, $\Hom_{\fpb}(\tV,\tV')$-valued functions on $\tL$.  Similarly to part (1) of the proposition above, the double coset $(\tL\cap \tK)\widetilde{\lambda}(\varpi)(\tL\cap\tK)$ for $\lambda\in X_*(T)_{L,-}$ supports a nonzero function $\varphi\in \cH_{\tL}(\tV,\tV')$ if and only if $\tV_{(N_{-\lambda}^-\cap K)^*}\cong \tV'_{(N_{-\lambda}^-\cap K)^*}$, in which case $\varphi(\widetilde{\lambda}(\varpi))$ is a scalar multiple of the homomorphism
$$\tV\longtwoheadrightarrow \tV_{(N_{-\lambda}^-\cap K)^*} \cong \tV'_{(N_{-\lambda}^-\cap K)^*}\stackrel{\sim}{\longleftarrow} \tV'^{(N_{-\lambda}\cap K)^*}\longhookrightarrow \tV'.$$

We also have an injective Satake map 
$$\cS_{\tL}^{\tM}:\cH_{\tL}(\tV,\tV')\longrightarrow \cH_{\tM}(\tV_{(N_1^-\cap K)^*},\tV'_{(N_1^-\cap K)^*}),$$
given by the formula \eqref{satakeeq}.   If we take $\tV''$ to be a third weight, we have an obvious product structure
\begin{eqnarray*}
\cH_{\tL}(\tV', \tV'')\times\cH_{\tL}(\tV,\tV') & \longrightarrow & \cH_{\tL}(\tV,\tV''),\\
(\varphi', \varphi) & \longmapsto & \varphi'*\varphi,
\end{eqnarray*}
which is preserved by the Satake map $\cS_{\tL}^{\tM}$.  
\end{remark}

In addition to the above, we have the following results relating parabolic and compact induction.

\begin{propn}\label{compisom}
Let $\tV$ be a weight of $\tK$ and $\tP = \tM\ltimes N^{*}$ a standard parabolic subgroup of $\tG$.  Assume that $\tV$ is $\tM$-regular.  We then have an isomorphism
$$\cH_{\tM}(\tV_{(N^-\cap K)^*})\otimes_{\cH_{\tG}(\tV)}\ind_{\tK}^{\tG}(\tV)\stackrel{\sim}{\longrightarrow}\Ind_{\tP^-}^{\tG}\left(\ind_{\tM\cap \tK}^{\tM}(\tV_{(N^-\cap K)^*})\right),$$
which is both $\tG$- and $\cH_{\tM}(\tV_{(N^-\cap K)^*})$-equivariant.  Here we view $\cH_{\tM}(\tV_{(N^-\cap K)^*})$ as an $\cH_{\tG}(\tV)$-module via $\cS_{\tG}^{\tM}$.
\end{propn}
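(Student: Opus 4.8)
Set $\tW := \tV_{(N^-\cap K)^*}$; by Lemma \ref{weights}(1) this is a weight of $\tM$. The plan is to produce a canonical $\tG$-equivariant map $\ind_{\tK}^{\tG}(\tV)\to\Ind_{\tP^-}^{\tG}(\ind_{\tM\cap\tK}^{\tM}(\tW))$, show it is $\cS_{\tG}^{\tM}$-equivariant for the Hecke actions, and then deduce that the induced map out of the tensor product is bijective by exhibiting both sides as the same localization of $\ind_{\tK}^{\tG}(\tV)$. First I would use the Iwasawa decomposition $\tG=\tP^-\tK$ (a single double coset, so $\tP^-\backslash\tG\cong(\tP^-\cap\tK)\backslash\tK$) to obtain, for every smooth $\tM$-representation $W$ inflated to $\tP^-$, a chain of natural isomorphisms
$$\Hom_{\tG}\big(\ind_{\tK}^{\tG}(\tV),\Ind_{\tP^-}^{\tG}(W)\big)\cong\Hom_{\tK}\big(\tV,(\Ind_{\tP^-}^{\tG}W)|_{\tK}\big)\cong\Hom_{\tP^-\cap\tK}(\tV,W)\cong\Hom_{\tM\cap\tK}(\tW,W),$$
the middle isomorphisms coming from Mackey restriction and Frobenius reciprocity, and the last from the fact that $(N^-\cap K)^*$ acts trivially on $W$. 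Taking $W=\ind_{\tM\cap\tK}^{\tM}(\tW)$ and letting $\Phi$ correspond to the canonical inclusion $\tW\hookrightarrow\ind_{\tM\cap\tK}^{\tM}(\tW)$ produces the desired map $\Phi:\ind_{\tK}^{\tG}(\tV)\to\Ind_{\tP^-}^{\tG}(\ind_{\tM\cap\tK}^{\tM}(\tW))$.

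Next I would check Hecke-equivariance: $\cH_{\tG}(\tV)$ acts on the source, $\cH_{\tM}(\tW)=\End_{\tM}(\ind_{\tM\cap\tK}^{\tM}\tW)$ acts on the target by functoriality of $\Ind_{\tP^-}^{\tG}$, and the claim is that $\Phi$ intertwines these via $\cS_{\tG}^{\tM}$. Since the $T_\lambda$ ($\lambda\in X_*(T)_-$) span $\cH_{\tG}(\tV)$, it suffices to verify $\Phi\circ T_\lambda=\cS_{\tG}^{\tM}(T_\lambda)\circ\Phi$, and upon unwinding the adjunctions above this becomes exactly the coinvariant-sum formula \eqref{satakeeq} defining $\cS_{\tG}^{\tM}$ — in other words, it is built into the Satake formalism of \cite{henniartvigneras:cptparabolic} (and can also be read off from \cite{herzig:modpgln}, $\S$3.2). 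By the universal property of $\otimes_{\cH_{\tG}(\tV)}$, the map $\Phi$ then descends to a $\tG$- and $\cH_{\tM}(\tW)$-equivariant map
$$\widetilde\Phi:\cH_{\tM}(\tW)\otimes_{\cH_{\tG}(\tV)}\ind_{\tK}^{\tG}(\tV)\longrightarrow\Ind_{\tP^-}^{\tG}\big(\ind_{\tM\cap\tK}^{\tM}(\tW)\big),$$
which is the map in the statement.

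It remains to see $\widetilde\Phi$ is an isomorphism. Fix $\lambda_0$ as in Proposition \ref{satakeprop}(3), for which $\Pi_{M_{\lambda_0}}=\Pi_M$, hence $M_{\lambda_0}=M$ and the $\tM$-regular weight $\tV$ is also $\tM_{\lambda_0}$-regular; Proposition \ref{satakeprop}(6) then gives $\cS_{\tG}^{\tM}(T_{\lambda_0})=T_{\lambda_0}^{\tM}$. Because $\cH_{\tG}(\tV)$ is commutative and $\cS_{\tG}^{\tM}$ is an injective algebra map realizing $\cH_{\tM}(\tW)$ as the localization $\cS_{\tG}^{\tM}(\cH_{\tG}(\tV))[(T_{\lambda_0}^{\tM})^{-1}]$ (Proposition \ref{satakeprop}(3),(4)), the source of $\widetilde\Phi$ is canonically $\varinjlim\big(\ind_{\tK}^{\tG}(\tV)\xrightarrow{\,T_{\lambda_0}\,}\ind_{\tK}^{\tG}(\tV)\xrightarrow{\,T_{\lambda_0}\,}\cdots\big)$. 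On the target, $\tP^-\backslash\tG$ is compact, so $\Ind_{\tP^-}^{\tG}=\ind_{\tP^-}^{\tG}$, and transitivity of compact induction together with the fact that inflation kills $N^{-,*}$ identifies it with $\ind_{(\tM\cap\tK)\ltimes N^{-,*}}^{\tG}(\tW)$. I would then equip source and target with exhaustive increasing filtrations indexed by $m\geq0$, using on the source the levels of the refined Cartan decomposition $\tG=\bigsqcup_\lambda\tK\widetilde\lambda(\varpi)\tK$ reached after applying $T_{\lambda_0}$ at most $m$ times, and on the target the corresponding exhaustion of $((\tM\cap\tK)N^{-,*})\backslash\tG$; the claim is that $\widetilde\Phi$ carries the $m$-th step of the source isomorphically onto the $m$-th step of the target. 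For $m=0$ this is the isomorphism $\tV^{(N\cap K)^*}\xrightarrow{\sim}\tV_{(N^-\cap K)^*}=\tW$ of Lemma \ref{weights}(2), spread over the identity double coset; the inductive step compares how multiplication by $T_{\lambda_0}$ on the source and by $T_{\lambda_0}^{\tM}$ on the target enlarge support, using the explicit description of these operators from Proposition \ref{satakeprop}(1).

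The hard part is this last filtration bookkeeping — matching the growth of support under iterated $T_{\lambda_0}$ with the Iwasawa–Cartan stratification of $\Ind_{\tP^-}^{\tG}(\ind_{\tM\cap\tK}^{\tM}\tW)$ — and this is precisely where $\tM$-regularity is essential: it guarantees via Proposition \ref{satakeprop}(6) that $\cS_{\tG}^{\tM}(T_{\lambda_0})$ is the single operator $T_{\lambda_0}^{\tM}$ with no lower-order correction terms, so the two filtrations stay in lockstep and each inductive step reduces to a bijection between finitely many double cosets. Everything else is formal and parallel to the reductive case of \cite{herzig:modpgln} and \cite{abehenniartherzigvigneras:irredmodp}; indeed, once $\widetilde\Phi$ and its Hecke-equivariance are in hand, Step 3 can alternatively be imported essentially verbatim from the argument there.
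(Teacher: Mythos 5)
Your proposal is correct and takes essentially the paper's route: the paper's proof is simply the observation that Theorem 4.6 of \cite{henniartvigneras:cptparabolic} (utilizing Corollary 6.5 and Proposition 5.4 of that paper) applies verbatim once the axioms checked in $\S$\ref{subsec:satake} are in place, and that argument is exactly what you sketch --- the canonical map from Frobenius reciprocity and the Iwasawa decomposition $\tG=\tP^-\tK$, its $\cS_{\tG}^{\tM}$-equivariance, and bijectivity after localizing at $T_{\lambda_0}$, with the only metaplectic input being the facts already recorded in Lemma \ref{weights} and Proposition \ref{satakeprop}. The one caveat is that your claim that $\widetilde{\Phi}$ carries each filtration step isomorphically onto its counterpart is stronger than what the cited proof actually establishes (it is a triangularity-with-respect-to-support argument whose matrix only becomes invertible after localization), but since you note that this step can in any case be imported verbatim from \cite{henniartvigneras:cptparabolic} and \cite{herzig:modpgln}, this does not affect the verdict.
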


\begin{proof}
This follows in exactly the same way as Theorem 4.6 (utilizing Corollary 6.5 and Proposition 5.4) of \cite{henniartvigneras:cptparabolic}. 
\end{proof}

Using this proposition, we obtain the following.

\begin{propn}\label{compfactors}
Let $\tV$ be a weight of $\tK$.  Then $\cH_{\tT}(\tV_{(U^-\cap K)^*})\otimes_{\cH_{\tG}(\tV)}\ind_{\tK}^{\tG}(\tV)$ is free as an $\cH_{\tT}(\tV_{(U^-\cap K)^*})$-module.  Moreover, if $\chi:\cH_{\tT}(\tV_{(U^-\cap K)^*})\longrightarrow \fpb$ is a character, then the representation $\chi\circ\cS_{\tG}^{\tT}\otimes_{\cH_{\tG}(\tV)}\ind_{\tK}^{\tG}(\tV)$ has finite length as a $\tG$-module, and its composition factors depend only on $\chi$ and the $(\tT\cap \tK)$-representation $\tV_{(U^-\cap K)^*}$.  
\end{propn}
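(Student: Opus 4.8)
The plan is to deduce both assertions from Proposition \ref{compisom} applied to the Borel $\tB = \tT \ltimes U^*$. First I would observe that the hypothesis of Proposition \ref{compisom} is automatically satisfied here: the relevant Levi is $\tT$, and a weight $\tV$ is $\tT$-regular precisely when $\Pi_\nu \subset \emptyset$, i.e.\ when $\langle \nu, \alpha^\vee \rangle > 0$ for every $\alpha \in \Pi$. Since we wish to allow arbitrary weights $\tV$, the first step is to reduce to the regular case. This is handled exactly as in the reductive setting (cf.\ \cite{herzig:modpgln}, proof of Corollary 3.9, or \cite{abe:irredmodp}): one writes $\tV = \tF(\nu)$ with $\nu \in X_q(T)$, picks $\nu' \in X_q(T)$ with $\Pi_{\nu'} = \emptyset$ and $\tV'_{(U^- \cap K)^*} \cong \tV_{(U^- \cap K)^*}$ as $(\tT\cap\tK)$-representations — such a $\nu'$ exists because the $(\tT\cap\tK)$-representation $\tV_{(U^-\cap K)^*}$ depends only on the image of $\nu$ in $X^*(T)/(q-1)X^0(T)$ together with a suitable $X^0(T)$-twist that one can absorb — and then uses the natural identification of $\cH_{\tG}(\tV)$ with $\cH_{\tG}(\tV')$ and of $\cH_{\tT}(\tV_{(U^-\cap K)^*})$ with $\cH_{\tT}(\tV'_{(U^-\cap K)^*})$ (Proposition \ref{satakeprop}(5) shows both Satake images are supported on the same antidominant cosets with leading coefficient $1$, so the change-of-weight map is an isomorphism of algebras compatible with $\cS_{\tG}^{\tT}$). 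Under this identification the base-changed modules $\chi \circ \cS_{\tG}^{\tT} \otimes_{\cH_{\tG}(\tV)} \ind_{\tK}^{\tG}(\tV)$ are isomorphic as $\tG$-representations for $\tV$ and $\tV'$, so it suffices to treat $\tV = \tV'$ regular.

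Granting regularity, Proposition \ref{compisom} with $\tP = \tB$ gives a $\tG$- and $\cH_{\tT}(\tV_{(U^-\cap K)^*})$-equivariant isomorphism
$$\cH_{\tT}(\tV_{(U^-\cap K)^*}) \otimes_{\cH_{\tG}(\tV)} \ind_{\tK}^{\tG}(\tV) \;\xrightarrow{\sim}\; \Ind_{\tB^-}^{\tG}\big(\ind_{\tT\cap\tK}^{\tT}(\tV_{(U^-\cap K)^*})\big).$$
For freeness over $\cH_{\tT}(\tV_{(U^-\cap K)^*})$, I would argue on the right-hand side: by the Iwasawa decomposition $\tG = \tB^- K^*$, restriction to $K^*$ gives $\Ind_{\tB^-}^{\tG}(\ind_{\tT\cap\tK}^{\tT}(\tV_{(U^-\cap K)^*}))\big|_{K^*} \cong \Ind_{(\tB^-\cap \tK)}^{\tK}(\tV_{(U^-\cap K)^*})$, and the $\cH_{\tT}$-action moves this module around by translations; concretely, $\ind_{\tT\cap\tK}^{\tT}(\tV_{(U^-\cap K)^*})$ is free of rank one over $\cH_{\tT}(\tV_{(U^-\cap K)^*})$ (it is the regular module, cf.\ the description of $\cH_{\tT}$ via Proposition \ref{satakeprop}(1) and the refined Cartan decomposition $\tT = \bigsqcup_\lambda (\tT\cap\tK)\widetilde{\lambda}(\varpi)(\tT\cap\tK)$), and induction $\Ind_{\tB^-}^{\tG}$ commutes with the base change, so the left-hand side inherits freeness. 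This is the same computation as \cite{herzig:modpgln}, Corollary 3.9 and \cite{abe:irredmodp}, §3; I would simply cite it after noting the metaplectic ingredients (Proposition \ref{satakeprop}, the refined Cartan and Iwasawa decompositions of §2.6) are in place.

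For the finite-length and independence-of-$\chi$ statement: specializing along the character $\chi$ and using that $\fpb \otimes_{\cH_{\tT}(\tV_{(U^-\cap K)^*}),\chi} \ind_{\tT\cap\tK}^{\tT}(\tV_{(U^-\cap K)^*})$ is a genuine character $\chi'$ of $\tT$ (the rank-one freeness collapses to a one-dimensional space on which $\tT$ acts through $\tV_{(U^-\cap K)^*}$ twisted by $\chi$), the isomorphism above yields $\chi \circ \cS_{\tG}^{\tT} \otimes_{\cH_{\tG}(\tV)} \ind_{\tK}^{\tG}(\tV) \cong \Ind_{\tB^-}^{\tG}(\chi')$. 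Since $\Ind_{\tB^-}^{\tG}$ preserves admissibility (the functor $\Ind_{\tH^-}^{\tL}$ is exact and admissibility-preserving by the discussion in §\ref{repns}, citing \cite{vigneras:rightadjoint}), $\Ind_{\tB^-}^{\tG}(\chi')$ is admissible; and an admissible representation generated by its $K^*$-invariants of a fixed type has finite length — more precisely $\Ind_{\tB^-}^{\tG}(\chi')\big|_{K^*} \cong \Ind_{(\tB^-\cap\tK)}^{\tK}(\chi'|_{\tT\cap\tK})$ is a fixed finite-dimensional representation of $K^*$ independent of $\chi'$, hence of finite length as a $K^*$-module, which forces $\Ind_{\tB^-}^{\tG}(\chi')$ to have finite length as a $\tG$-module. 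Finally the composition factors depend only on $\chi$ and the $(\tT\cap\tK)$-representation $\tV_{(U^-\cap K)^*}$ because the only data entering $\chi'$ are precisely $\chi$ and $\tV_{(U^-\cap K)^*}$, and $\Ind_{\tB^-}^{\tG}(\chi')$ is determined up to isomorphism by $\chi'$.

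The main obstacle is the change-of-weight reduction in the first paragraph: one must make sure that, in the metaplectic setting, the Satake-algebra identification $\cH_{\tG}(\tV) \cong \cH_{\tG}(\tV')$ and its compatibility with $\cS_{\tG}^{\tT}$ genuinely go through when $\tV_{(U^-\cap K)^*} \cong \tV'_{(U^-\cap K)^*}$ only as $(\tT\cap\tK)$-representations — this is where the weight $\tF(\nu)$ versus $\tF(\nu')$ bookkeeping with $X^0(T)$-twists enters, and where one wants Proposition \ref{satakeprop}(5) (triangularity of the Satake transform with unit leading coefficient) to guarantee the map is invertible. Everything else is a transcription of the reductive-group arguments using the decompositions and Satake formalism already assembled in §2.
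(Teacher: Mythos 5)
The decisive gap is in your reduction to the regular case. From the identification of $\cH_{\tG}(\tV)$ with $\cH_{\tG}(\tV')$ inside $\cH_{\tT}(\tV_{(U^-\cap K)^*})$ (which does exist, by triangularity of the Satake transform) you conclude that $\chi\circ\cS_{\tG}^{\tT}\otimes_{\cH_{\tG}(\tV)}\ind_{\tK}^{\tG}(\tV)$ and $\chi\circ\cS_{\tG}^{\tT}\otimes_{\cH_{\tG}(\tV')}\ind_{\tK}^{\tG}(\tV')$ are isomorphic as $\tG$-representations whenever $\tV_{(U^-\cap K)^*}\cong\tV'_{(U^-\cap K)^*}$. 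That assertion is precisely the change-of-weight statement, and it is false without the hypotheses on $\chi$ in Theorem \ref{thm:changeofwt}: an isomorphism of Hecke algebras does not transport one compactly induced module to the other; the only available intertwiners are the maps $\varphi^{\pm}$ supported on $\tK\widetilde{\lambda}(\varpi)\tK$, and their composite acts on the specialization by $\chi(\varphi^-*\varphi^+)$, which vanishes exactly in the degenerate cases (for instance a short root $\alpha_i$ with $\langle\Pi(\chi),\alpha_i^{\vee}\rangle=0$ and $(\chi'\circ(\cS_{\tM_{\Pi(\chi)}}^{\tT})^{-1})(\tau_{\alpha_i^{\vee}})=1$, or $\alpha_i\in\Pi(\chi)$). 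In those cases the two specializations are genuinely non-isomorphic: already for $\textnormal{GL}_2$ (hence for short-root directions inside $\tM_\s$ here) they are nonsplit extensions of the same two factors in opposite orders, agreeing only up to semisimplification --- which is exactly the statement you are trying to prove, so the reduction is circular. The same unjustified identification is also what your freeness claim for non-regular $\tV$ rests on.

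There are secondary problems even in the regular case: $\Ind_{\tB^-}^{\tG}\bigl(\ind_{\tT\cap\tK}^{\tT}(\tV_{(U^-\cap K)^*})\bigr)$ restricted to $K^*$ is the induction of the restriction of the whole compactly induced $\tT$-representation, not of $\tV_{(U^-\cap K)^*}$; and the finite-length argument fails outright, since $\Ind_{\tB^-\cap\tK}^{\tK}(\chi'|_{\tT\cap\tK})$ is infinite-dimensional ($K/(B^-\cap K)$ is infinite) and a smooth $\fpb$-representation of a compact group of finite length is necessarily finite-dimensional, so no restriction-to-$\tK$ argument can bound the length over $\tG$. The paper proves the proposition along entirely different lines, following \S 4.2 of \cite{abe:irredmodp}: one builds filtrations of $\cH_{\tT}(\tV_{(U^-\cap K)^*})\otimes_{\cH_{\tG}(\tV)}\ind_{\tK}^{\tG}(\tV)$ using intertwiners $\Phi_{P_1,P_2}$, $\Phi_{P_2,P_1}$ attached to adjacent standard parabolics (normalized versions of $\varphi^{\pm}$), and the only new metaplectic input is the computation $\cS_{\tG}^{\tT}(\varphi^-*\varphi^+)$ of Proposition \ref{propn:st2l}, which shows that for $i=n$ one may normalize so that both composites $\Phi_{P_2,P_1}\circ\Phi_{P_1,P_2}$ and $\Phi_{P_1,P_2}\circ\Phi_{P_2,P_1}$ are identities; it is this filtration argument, not a weight-change isomorphism, that yields freeness, finite length, and the independence of the composition factors from $\tV$.
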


\begin{proof}
  The proof is almost identical to the proof contained in $\S$4.2 of \cite{abe:irredmodp}.  The only essential difference is in the proof of Lemma 4.13 of \textit{loc.~cit.}: let $\tP_1 = \tM_1\ltimes N_1^{*} \subset \tP_2 = \tM_2\ltimes N_2^{*}$ be two standard parabolic subgroups, and assume $\Pi_{M_2} = \Pi_{M_1}\sqcup\{\alpha_i\}$ for a simple root $\alpha_i$. In analogy with Abe's definition, we take the maps $\Phi_{P_2,P_1}$ and $\Phi_{P_1,P_2}$ to be normalized versions of the maps $\varphi^-$ and $\varphi^+$ from the proof of Theorem \ref{thm:changeofwt} below.  Our normalization differs from that of \textit{loc.~cit.} only when $i = n$: in that case, we let $\Phi_{P_1, P_2} = \varphi^{+}$ and $\Phi_{P_{2}, P_{1} }= \tau_{-2\lambda}\cdot \varphi^{-}$. Our calculation of $\cS_{\tG}^{\tT}(\varphi^{-} * \varphi^{+})$ in Proposition \ref{propn:st2l} then shows that both $\Phi_{P_2,P_1}\circ\Phi_{P_1,P_2}$ and $\Phi_{P_1,P_2}\circ \Phi_{P_2,P_1}$ are identity maps when $i = n$. 
\end{proof}

\subsection{Special features of the metaplectic setting}
\label{twistrep}  
Fix, once and for all, a smooth additive character $\psi:F\longrightarrow \bbC^\times$, and let $\chi_\psi:\tM_{\s}\longrightarrow \fpb^\times$ denote the genuine character defined by
$$\chps(m,\zeta) = \zeta\gamma(\sideset{}{_{M_\s}}\det(m),\psi)^{-1},$$
where $\gamma(a,\psi) = \gamma(\psi_a)/\gamma(\psi)$, $\gamma(\psi)$ is the Weil index, and $\psi_a(x) = \psi(ax)$.  Here we are using the coordinates afforded by Rao's cocycle.  We consider $\chps$ as being valued in $\mu_4(\fpb)$ via a fixed isomorphism $\mu_4(\fpb)\cong \mu_4(\bbC)$.  

Using the character $\chps$, we obtain a bijection between isomorphism classes of mod-$p$ representations of $M_\s\cong\textnormal{GL}_n(F)$ and isomorphism classes of genuine mod-$p$ representations of $\tM_\s$, given by $\tau\longmapsto\tau\otimes\chps$, where we view $\tau$ as a (nongenuine) representation of $\tM_\s$ by inflation from $M_\s$. It follows from (\ref{cocycleMs}) and the construction of the elements $\widetilde{\alpha}_j^\vee(x)$ that we have $\chps(\widetilde{\alpha}_j^\vee(x)) = 1$ for $1\leq j \leq n - 1$.

We now prove a simple lemma which will be of use in determining the support of functions in the image of the Satake map.  Fix $\lambda\in X_*(T)_-, \mu\in X_*(T)$, and $\zeta\in \mu_2$, and define the following sets:
\begin{eqnarray*}
S_{\mu,\lambda} & := & \{u\in (U^-\cap K)\backslash U^-: (U^-\cap K)u\mu(\varpi) \subset K\lambda(\varpi)K\}\\
\tS_{\mu,\lambda,\zeta} & := & \{u\in (U^-\cap K)^*\backslash U^{-,*}: (U^-\cap K)^*u\widetilde{\mu}(\varpi) \subset K^*\widetilde{\lambda}(\varpi)\cdot\zeta K^*\}
\end{eqnarray*}

\begin{lemma} 
\label{setsize}
Let $\lambda\in X_*(T)_-, \mu\in X_*(T)$, and $\zeta\in \mu_2$.  
\begin{enumerate}
\item We have $|S_{\mu,\lambda}| = |\tS_{\mu,\lambda,1}| + |\tS_{\mu,\lambda,-1}|$.  
\item Write $\lambda = \sum_{i = 1}^n a_i\lambda_i, \mu = \sum_{i = 1}^n b_i\lambda_i$, and suppose $\sum_{i = 1}^n (a_i + b_i)$ is odd.  Then we have
$$|\widetilde{S}_{\mu,\lambda,1}| = |\widetilde{S}_{\mu,\lambda,-1}| = |S_{\mu,\lambda}|/2.$$
\end{enumerate}
\end{lemma}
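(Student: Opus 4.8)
The plan is to reduce both parts to a careful bookkeeping of how the $\mu_2$-fibers distribute over the coset decompositions in $G$ versus $\tG$.

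For part (1), the key observation is that the projection $\pr:\tG\longrightarrow G$ restricts to a bijection $(U^-\cap K)^*\backslash U^{-,*}\stackrel{\sim}{\longrightarrow}(U^-\cap K)\backslash U^-$, since $\tG\longrightarrow G$ splits uniquely over $U^-$ (hence over $U^-\cap K$), giving an isomorphism $\tU^-\cong U^{-,*}\times\mu_2$. Thus each $u\in S_{\mu,\lambda}$ has a unique lift $u^*\in U^{-,*}$, and I claim $u\in S_{\mu,\lambda}$ if and only if $u^*\in\tS_{\mu,\lambda,1}$ or $u^*\in\tS_{\mu,\lambda,-1}$, with \emph{exactly one} of the two holding. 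Indeed, the refined Cartan decomposition $\tG=\bigsqcup_{\lambda,\zeta}K^*\widetilde{\lambda}(\varpi)\cdot\zeta K^*$ (recorded in $\S$2.6, its disjointness via Theorem 9.2 of \cite{mcnamara:psreps}) shows that $(U^-\cap K)^*u^*\widetilde{\mu}(\varpi)$, being a subset of a single $K^*$-$K^*$-double coset times a sign, lies in $K^*\widetilde{\lambda}(\varpi)\cdot\zeta K^*$ for a \emph{unique} pair $(\lambda',\zeta)$; and the condition $(U^-\cap K)u\mu(\varpi)\subset K\lambda(\varpi)K$ is exactly the statement $\lambda'=\lambda$ after applying $\pr$. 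Hence the map $u\mapsto u^*$ identifies $S_{\mu,\lambda}$ with $\tS_{\mu,\lambda,1}\sqcup\tS_{\mu,\lambda,-1}$, yielding the cardinality identity.

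For part (2), the extra input is a symmetry that swaps the two signs when the parity condition $\sum_i(a_i+b_i)$ is odd. The natural candidate is to translate by the central element $-1\in\mu_2\subset\tG$: left-multiplication by $\zeta_0\in\mu_2$ sends $K^*\widetilde{\lambda}(\varpi)\cdot\zeta K^*$ to $K^*\widetilde{\lambda}(\varpi)\cdot\zeta_0\zeta K^*$, so it interchanges $\tS_{\mu,\lambda,1}$ and $\tS_{\mu,\lambda,-1}$ \emph{provided} it fixes the relevant cosets on the left. But $-1$ is central, so multiplying $(U^-\cap K)^*u^*\widetilde{\mu}(\varpi)$ on the left by $-1$ has no effect on which $\mu_2$-component the answer lands in unless we are more careful: the real source of a sign flip must come from replacing a chosen lift by its negative. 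Concretely, I would argue as follows. Fix lifts and examine how $\tS_{\mu,\lambda,\zeta}$ depends on the choice of lift $\widetilde{\mu}(\varpi)$ of $\mu(\varpi)$; changing $\widetilde{\mu}(\varpi)$ to $-\widetilde{\mu}(\varpi)$ visibly swaps $\tS_{\mu,\lambda,1}\leftrightarrow\tS_{\mu,\lambda,-1}$, so the \emph{cardinalities} $|\tS_{\mu,\lambda,1}|$ and $|\tS_{\mu,\lambda,-1}|$ are independent of that choice only if they are equal — but that is not automatic, so instead I use a genuine involution on $U^{-,*}$. The right tool is conjugation by $\widetilde{\mu}(\varpi)$ combined with the commutator formula \eqref{comm}–\eqref{spcomm}: for $u^*\in U^{-,*}$ lying in a root subgroup $U_{-\beta}^*$, we have $\widetilde{\mu}(\varpi)\widetilde{u}_{-\beta}(x)\widetilde{\mu}(\varpi)^{-1}=\widetilde{u}_{-\beta}((-\beta)(\mu(\varpi))x)$ with \emph{no} sign, since the splitting over $U^-$ is a homomorphism and unipotents lift compatibly. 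So conjugation alone will not produce the sign. The sign must instead be extracted from the Hilbert-symbol cocycle governing how $\widetilde{\lambda}(\varpi)$ interacts with $\widetilde{\mu}(\varpi)$ after one writes an element of $K^*\widetilde{\lambda}(\varpi)\cdot\zeta K^*$ explicitly.

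\textbf{The main obstacle}, and the step I would spend the most care on, is precisely part (2): producing a bijection $\tS_{\mu,\lambda,1}\stackrel{\sim}{\longrightarrow}\tS_{\mu,\lambda,-1}$ under the parity hypothesis. I expect the cleanest route is to fix $u\in S_{\mu,\lambda}$ with lift $u^*$, write $u^*\widetilde{\mu}(\varpi)=k_1^*\widetilde{\lambda}(\varpi)\cdot\zeta(u)\,k_2^*$ for suitable $k_i^*\in K^*$ (using the corresponding Cartan expression $u\mu(\varpi)=k_1\lambda(\varpi)k_2$ in $G$ and lifting), and then track how $\zeta(u)$ changes when one replaces this identity by the one obtained from a different but related $u'$. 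The parity $\sum_i(a_i+b_i)$ odd is exactly the exponent appearing in a formula of type \eqref{gspconj}/\eqref{spcomm} — more precisely it measures the image of $\widetilde{\lambda+\mu}(\varpi)$ versus $\widetilde{\lambda}(\varpi)\widetilde{\mu}(\varpi)$ under Rao's cocycle, cf. \eqref{cocycleMs} on $\tM_{\s}$ — so when it is odd, multiplying a representative by a carefully chosen torus element $\widetilde{t}(\varpi)$ (with $t$ having the appropriate coroot coordinates) flips $\zeta(u)$ while permuting $S_{\mu,\lambda}$ trivially on the $G$-side, hence restricting to the desired cardinality-preserving bijection between the two fibers. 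Combined with part (1), this gives $|\tS_{\mu,\lambda,1}|=|\tS_{\mu,\lambda,-1}|=|S_{\mu,\lambda}|/2$.
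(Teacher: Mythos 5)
Your part (1) is correct and is essentially the paper's own argument (unique splitting over $U^-$ plus the disjointness of the refined Cartan decomposition $\tK\widetilde{\lambda}(\varpi)\tK = K^*\widetilde{\lambda}(\varpi)K^*\sqcup K^*\widetilde{\lambda}(\varpi)\cdot(-1)K^*$). The genuine gap is in part (2): you never actually exhibit the symmetry interchanging $\tS_{\mu,\lambda,1}$ and $\tS_{\mu,\lambda,-1}$, and the candidate you finally name cannot produce it. No sign can be generated from inside $\tG$ at all: the map $\lambda\longmapsto\widetilde{\lambda}(\varpi)$ is a homomorphism on $X_*(T)$, so $\widetilde{(\lambda+\mu)}(\varpi)=\widetilde{\lambda}(\varpi)\widetilde{\mu}(\varpi)$ exactly and your reading of the parity condition as a Rao-cocycle discrepancy between these two elements is wrong; lifts of elements of $T$ commute by \eqref{spcomm}; and conjugation of unipotent lifts is sign-free, as you observe yourself. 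Consequently ``multiplying a representative by a carefully chosen torus element $\widetilde{t}(\varpi)$ with appropriate coroot coordinates'' flips nothing; worse, such an element is evaluated at $\varpi$, hence does not normalize $K^*$, so it does not even preserve the defining condition of $\tS_{\mu,\lambda,\zeta}$, and genuine multiplication (rather than conjugation) changes $\mu$ or $\lambda$ on the $G$-side instead of ``permuting $S_{\mu,\lambda}$ trivially.''

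The missing idea, which is the entire content of part (2) in the paper, is to leave $\tG$ and use the similitude cover: choose $y\in\cO^\times$ whose image in $\kk^\times$ is a non-square, and conjugate by any lift $\widetilde{\lambda}_{n+1}(y)\in\widetilde{\textnormal{GSp}}_{2n}(F)$. Because $y$ is a \emph{unit}, $\lambda_{n+1}(y)$ normalizes $U^-$ and $K$, so by uniqueness of the splittings $\widetilde{\lambda}_{n+1}(y)$ normalizes $U^{-,*}$, $(U^-\cap K)^*$ and $K^*$; and by \eqref{gspconj} its commutators with $\widetilde{\mu}(\varpi)$ and $\widetilde{\lambda}(\varpi)$ contribute $(\varpi,y)_F^{\sum_i b_i}$ and $(\varpi,y)_F^{\sum_i a_i}$ respectively, so conjugation maps $\tS_{\mu,\lambda,\zeta}$ injectively into $\tS_{\mu,\lambda,\zeta(\varpi,y)_F^{\sum_i(a_i+b_i)}}=\tS_{\mu,\lambda,-\zeta}$ when $\sum_i(a_i+b_i)$ is odd, since $(\varpi,y)_F=-1$ for such $y$ (this is where $p\neq 2$ enters). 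Note that evaluating the extra cocharacter at $\varpi$ rather than at a non-square unit would fail twice over: $(\varpi,\varpi)_F=+1$ whenever $q\equiv 1\pmod 4$, and $\lambda_{n+1}(\varpi)$ does not normalize $K$. With this involution-type map in hand, part (1) gives $|\tS_{\mu,\lambda,1}|=|\tS_{\mu,\lambda,-1}|=|S_{\mu,\lambda}|/2$; without some such element from outside $\tG$, your outline does not close.
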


\begin{proof}  Part (1) follows easily using the fact that $\tK\widetilde{\lambda}(\varpi)\tK = K^*\widetilde{\lambda}(\varpi)K^*\sqcup K^*\widetilde{\lambda}(\varpi)\cdot (-1)K^*$, together with the splitting over $U$. For part (2), let $y\in \cO^\times$ and note that the element $\lambda_{n + 1}(y)\in \textnormal{GSp}_{2n}(F)$ normalizes $U$ and $K$.  By uniqueness of the splittings over $U$ and $K$, this implies that $\widetilde{\lambda}_{n + 1}(y)$ normalizes $U^*$ and $K^*$, where $\widetilde{\lambda}_{n + 1}(y)$ denotes an arbitrary lift of $\lambda_{n + 1}(y)$ to $\widetilde{\textnormal{GSp}}_{2n}(F)$.  Suppose $\tS_{\mu,\lambda,\zeta}\neq \emptyset$, and choose $u\in \tS_{\mu,\lambda,\zeta}$.  Conjugating $u$ by $\widetilde{\lambda}_{n + 1}(y)$ and applying equation \eqref{gspconj} gives 
\begin{eqnarray*}
(U^-\cap K)^*(\widetilde{\lambda}_{n + 1}(y)u\widetilde{\lambda}_{n + 1}(y)^{-1})\widetilde{\mu}(\varpi) & = & \widetilde{\lambda}_{n + 1}(y)\left((U^-\cap K)^*u\widetilde{\mu}(\varpi)\right)\widetilde{\lambda}_{n + 1}(y)^{-1}\cdot(\varpi,y)_F^{\sum_{i = 1}^n b_i}\\
& \subset & \widetilde{\lambda}_{n + 1}(y)\left(K^*\widetilde{\lambda}(\varpi)\cdot\zeta K^*\right)\widetilde{\lambda}_{n + 1}(y)^{-1}\cdot(\varpi,y)_F^{\sum_{i = 1}^n b_i}\\
& = & K^*(\widetilde{\lambda}_{n + 1}(y)\widetilde{\lambda}(\varpi)\widetilde{\lambda}_{n + 1}(y)^{-1})\cdot\zeta(\varpi,y)_F^{\sum_{i = 1}^n b_i}K^*\\
& = & K^*\widetilde{\lambda}(\varpi)\cdot\zeta(\varpi,y)_F^{\sum_{i = 1}^n (a_i + b_i)}K^*\\
& = & K^*\widetilde{\lambda}(\varpi)\cdot\zeta(\varpi,y)_FK^*.
\end{eqnarray*}
This shows $\widetilde{\lambda}_{n + 1}(y)u\widetilde{\lambda}_{n + 1}(y)^{-1} \in \widetilde{S}_{\mu,\lambda,\zeta(\varpi,y)_F}$.  In particular, when $y$ is an element of $\cO^{\times}$ whose image in $\kk^\times$ is not a square, conjugation by $\widetilde{\lambda}_{n+1}(y)$ gives an injective map from $\widetilde{S}_{\lambda,\mu,\zeta}$ to $\widetilde{S}_{\lambda,\mu,-\zeta}$.  Combining with part (1) gives the claim.  
\end{proof}

\begin{cor}
\label{cor:oddsum}
Let $\tV$ be a weight of $\tK$, fix $\lambda\in X_*(T)_-$, and write
$$\cS_{\tG}^{\tT}(T_{\lambda}) = \sum_{\sub{\mu\in X_*(T)_-}{\mu\geq \lambda}}c_{\lambda}(\mu)\tau_\mu.$$
Then we have $c_\lambda(\mu) = 0$ for all $\mu$ such that, in the expression of $\mu + \lambda$ as a linear combination of the $\lambda_i$, the coefficients sum to an odd number. 
\end{cor}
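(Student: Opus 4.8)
The plan is to express the Satake coefficients $c_\lambda(\mu)$ in terms of the combinatorial sets $S_{\mu,\lambda}$, $\widetilde{S}_{\mu,\lambda,\zeta}$ introduced above, and then to invoke Lemma \ref{setsize}(2) to force the relevant coefficients to vanish. First I would recall, following the explicit Satake formula \eqref{satakeeq} applied to $\tL = \tG$, $\tM = \tT$, that the value $\cS_{\tG}^{\tT}(T_\lambda)(\widetilde{\mu}(\varpi))$ is obtained by summing the values $T_\lambda(n\widetilde{\mu}(\varpi))$ over $n \in (U^-\cap K)^*\backslash U^{-,*}$, and that $T_\lambda(n\widetilde{\mu}(\varpi))$ is nonzero precisely when $n\widetilde{\mu}(\varpi)$ lies in the support double coset $K^*\widetilde{\lambda}(\varpi)\zeta K^*$ — i.e., precisely when $n$ represents a class in $\widetilde{S}_{\mu,\lambda,\zeta}$ for the appropriate $\zeta$. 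Since $T_\lambda$ takes a fixed value (the projection of Proposition \ref{satakeprop}(1)) on each such coset, $c_\lambda(\mu)$ will be a sum, taken over $\widetilde{S}_{\mu,\lambda,1}$ and $\widetilde{S}_{\mu,\lambda,-1}$ with opposite signs coming from the genuineness character $\varepsilon$, of these fixed scalars. Concretely, I expect $c_\lambda(\mu)$ to reduce to $\bigl(|\widetilde{S}_{\mu,\lambda,1}| - |\widetilde{S}_{\mu,\lambda,-1}|\bigr)$ times a nonzero constant, reduced mod $p$.

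Granting that reduction, the corollary is immediate: by hypothesis the coefficients of $\mu + \lambda$ in the $\lambda_i$-basis sum to an odd integer, so Lemma \ref{setsize}(2) gives $|\widetilde{S}_{\mu,\lambda,1}| = |\widetilde{S}_{\mu,\lambda,-1}|$, whence $c_\lambda(\mu) = 0$. I would write $\lambda = \sum a_i\lambda_i$, $\mu = \sum b_i\lambda_i$ and simply quote that $\sum(a_i+b_i)$ odd is exactly the hypothesis of Lemma \ref{setsize}(2).

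The main obstacle is the bookkeeping in the first step: one must be careful about how the coinvariants functor and the genuine structure interact, so that the contribution of a coset $K^*\widetilde{\lambda}(\varpi)\cdot(-1)K^*$ really does enter with the opposite sign to that of $K^*\widetilde{\lambda}(\varpi)K^*$. This is where genuineness is essential — in the non-genuine (reductive) setting both cosets contribute with the same sign and one instead gets $|S_{\mu,\lambda}|$ times a constant, which need not vanish. I would handle this by noting that $T_\lambda$, viewed as an $\End_{\fpb}(\tV_{(U^-\cap K)^*})$-valued function, satisfies $T_\lambda(\zeta g) = \varepsilon(\zeta)T_\lambda(g)$ because $\tV$ and $\tV_{(U^-\cap K)^*}$ are genuine; splitting the sum in \eqref{satakeeq} according to which coset $n\widetilde{\mu}(\varpi)$ lands in then produces the signed count $|\widetilde{S}_{\mu,\lambda,1}| - |\widetilde{S}_{\mu,\lambda,-1}|$ directly. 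Once this sign is pinned down, the rest is a one-line application of Lemma \ref{setsize}.
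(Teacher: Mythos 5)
Your reduction of $c_\lambda(\mu)$ to the signed count $|\tS_{\mu,\lambda,1}| - |\tS_{\mu,\lambda,-1}|$ is exactly where the argument breaks down. For $n\in\tS_{\mu,\lambda,\zeta}$ write $n\widetilde{\mu}(\varpi) = k_n^*\,\widetilde{\lambda}(\varpi)\cdot\zeta\, k_n'^*$ with $k_n^*,k_n'^*\in K^*$; then indeed $T_\lambda(n\widetilde{\mu}(\varpi)) = \varepsilon(\zeta)\, k_n^*\circ T_\lambda(\widetilde{\lambda}(\varpi))\circ k_n'^*$, so the sign $\varepsilon(\zeta)$ you extract from genuineness is correct, but the actual contribution to $c_\lambda(\mu)$ is the scalar by which $\overline{k_n^*\, T_\lambda(\widetilde{\lambda}(\varpi))\, k_n'^*\, v}$ is a multiple of $\overline{v}$ in the one-dimensional space $\tV_{(U^-\cap K)^*}$, and this scalar depends on $k_n^*$ and $k_n'^*$, hence on $n$ (it can even vanish for some $n$). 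Only when the weight is one-dimensional, as for $1\boxtimes\varepsilon$ in Proposition \ref{samecoeffs}, do all classes in $\tS_{\mu,\lambda,\zeta}$ contribute the same value $\varepsilon(\zeta)$, giving $c_\lambda(\mu)=|\tS_{\mu,\lambda,1}|-|\tS_{\mu,\lambda,-1}|$. The corollary, however, concerns an arbitrary weight $\tV$ of $\tK$ (and is later applied to such), so the cardinality identity of Lemma \ref{setsize}(2) by itself does not force the coefficient to vanish: two index sets of equal size can have different weighted sums.

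What is missing is a term-by-term cancellation, and this is how the paper proceeds. One extends $\tV$ to a representation of a lift of $\textnormal{GSp}_{2n}(\cO)$ (possible by \cite{herzig:modpgln}, Proposition 2.2), so that the element $\delta$, a lift of $\lambda_{n+1}(y)$ with $y\in\cO^\times$ a non-square unit, acts on $\tV$; since $\delta$ normalizes $(N_{-\lambda}\cap K)^*$ it satisfies $\delta\, T_\lambda(\widetilde{\lambda}(\varpi))\,\delta^{-1} = T_\lambda(\widetilde{\lambda}(\varpi))$, and it acts on the line through $v\in\tV^{(U\cap K)^*}$ by a scalar. A short computation then gives $\overline{T_\lambda(u\widetilde{\mu}(\varpi))(v)} = -\,\overline{T_\lambda((\delta u\delta^{-1})\widetilde{\mu}(\varpi))(v)}$ for every $u$, while $u\longmapsto \delta u\delta^{-1}$ is a bijection $\tS_{\mu,\lambda,1}\longrightarrow\tS_{\mu,\lambda,-1}$ when the coefficient sum of $\mu+\lambda$ is odd (this conjugation is precisely the mechanism behind Lemma \ref{setsize}(2)). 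Summing over the refined Cartan decomposition, the contributions cancel in pairs and $c_\lambda(\mu)=0$. So your instinct to exploit the $\pm 1$-coset dichotomy and Lemma \ref{setsize} is the right one, but you need the $\delta$-equivariance of $T_\lambda(\widetilde{\lambda}(\varpi))$ and of $v$ to compare the varying individual contributions, not merely the sizes of the two sets.
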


\begin{proof}
Fix $y\in \cO^\times$ whose image in $\mathfrak{k}^\times$ is not a square, and let $\delta\in \widetilde{\textnormal{GSp}}_{2n}(F)$ denote some fixed lift of $\lambda_{n + 1}(y)$.  It is known (cf. \cite{herzig:modpgln}, Proposition 2.2) that for every weight $V$ of $K$, there exists a weight of $\textnormal{GSp}_{2n}(\cO)$ whose restriction to $K$ is $V$.  Therefore, we may endow $\tV$ with an action of the element $\delta$.  In particular, since $\delta$ normalizes $(N_{-\lambda}\cap K)^*$, the explicit description of the operator $T_\lambda$ shows that $\delta T_\lambda(\widetilde{\lambda}(\varpi))\delta^{-1} = T_\lambda(\widetilde{\lambda}(\varpi))$.

For $\mu$ and $\lambda$ as in the statement of the corollary (with the coefficients of $\mu + \lambda$ summing to an odd number) and $u \in \tS_{\mu,\lambda,1}$, there exist $k_{u}^{*}, k_u'^* \in K^*$ such that $u\widetilde{\mu}(\varpi) = k_u^*\widetilde{\lambda}(\varpi)k_u'^*$.  Then, since $\delta$ is a lift of $\lambda_{n + 1}(y)$, we proceed as in the proof of Lemma \ref{setsize}(2) to obtain 
\begin{eqnarray*}
(\delta u \delta^{-1})\widetilde{\mu}(\varpi) & = & (\varpi,y)_F\cdot (\delta k_u^* \delta^{-1})\widetilde{\lambda}(\varpi)(\delta k_u'^* \delta^{-1})\\
 & = & (-1)\cdot (\delta k_u^* \delta^{-1})\widetilde{\lambda}(\varpi)(\delta k_u'^* \delta^{-1}).
 \end{eqnarray*}
Fix a nonzero $v\in \tV^{(U\cap K)^*}\stackrel{\sim}{\longrightarrow}\tV_{(U^-\cap K)^*}$, and let $c$ denote the eigenvalue of $\delta$ on $v$.  We then have
\begin{eqnarray*}
\overline{T_{\lambda}(u\widetilde{\mu}(\varpi))(v)} & = & \overline{k_u^*\circ T_{\lambda}(\widetilde{\lambda}(\varpi))\circ k_u'^*.v }\\
& = & c^{-1}\delta\cdot\left(\overline{k_u^*\circ T_{\lambda}(\widetilde{\lambda}(\varpi))\circ k_u'^*\delta^{-1}\cdot(cv) }\right)\\
& = & \overline{(\delta k_u^*\delta^{-1})\circ (\delta T_{\lambda}(\widetilde{\lambda}(\varpi))\delta^{-1})\circ (\delta k_u'^*\delta^{-1})\cdot v }\\
& = & \overline{(\delta k_u^*\delta^{-1})\circ T_{\lambda}(\widetilde{\lambda}(\varpi))\circ (\delta k_u'^*\delta^{-1})\cdot v }\\
& = & \overline{T_{\lambda}((\delta k_u^*\delta^{-1})\widetilde{\lambda}(\varpi)(\delta k_u'^*\delta^{-1}))(v)}\\
& = & -\overline{T_{\lambda}((\delta u\delta^{-1})\widetilde{\mu}(\varpi))(v)}.
\end{eqnarray*}
Hence, using Lemma \ref{setsize}, we obtain
\begin{eqnarray*}
c_{\lambda}(\mu)\overline{v} & = & \cS_{\tG}^{\tT}(T_{\lambda})(\widetilde{\mu}(\varpi))(\overline{v})\\
& = & \sum_{u\in \tS_{\mu,\lambda,1}}\overline{T_\lambda(u\widetilde{\mu}(\varpi))(v)} + \sum_{u\in \tS_{\mu,\lambda,-1}}\overline{T_\lambda(u\widetilde{\mu}(\varpi))(v)}\\
& = & \sum_{u\in \tS_{\mu,\lambda,1}}\overline{T_\lambda(u\widetilde{\mu}(\varpi))(v)} + \sum_{u\in \tS_{\mu,\lambda,1}}\overline{T_\lambda((\delta u\delta^{-1})\widetilde{\mu}(\varpi))(v)}\\
& = & 0.
\end{eqnarray*}
\end{proof}

In the following proposition, we let unaccented symbols correspond to the appropriate analogous objects for the reductive group $G$ (so $\cS_{L}^T$ denotes the Satake map with respect to a weight $V$, etc.).

\begin{propn}
\label{samecoeffs}
Let $L$ be a standard Levi subgroup of $G$ such that $L\subset M_\s$, and let $1$ denote the trivial weight of $L$.  Fix $\lambda\in X_*(T)_{L,-}$, and write 
$$\cS_L^T(T_\lambda^L) = \sum_{\sub{\mu\in X_*(T)_{L,-}}{\mu\geq_L \lambda}}c_\lambda(\mu)T_\mu^T.$$
Let $\cS_{\tL}^{\tT}$ denote the Satake map for the weight $1\boxtimes\varepsilon$.  We then have
$$\cS_{\tL}^{\tT}(T_\lambda^{\tL}) = \sum_{\sub{\mu\in X_*(T)_{L,-}}{\mu\geq_L \lambda}}\widetilde{c}_\lambda(\mu)\tau_\mu,$$
where $\widetilde{c}_\lambda(\mu) = c_\lambda(\mu)$.
\end{propn}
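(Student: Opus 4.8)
The plan is to compute both $\cS_L^T(T_\lambda^L)$ and $\cS_{\tL}^{\tT}(T_\lambda^{\tL})$ from the explicit Satake formula \eqref{satakeeq} together with Proposition \ref{satakeprop}(1), to reduce each to a count (modulo $p$) of cosets in a Cartan-type decomposition, and to show that these counts literally coincide by passing to Rao's coordinates on $\tM_\s$.

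Set $S^L_{\mu,\lambda} := \{u\in(U^-\cap L\cap K)\backslash(U^-\cap L) : u\mu(\varpi)\in(L\cap K)\lambda(\varpi)(L\cap K)\}$, the $L$-analogue of the set $S_{\mu,\lambda}$ of Lemma \ref{setsize}. Since $1$ is the trivial weight, $L\cap K$ and the projection $T_\lambda^L(\lambda(\varpi))$ act as the identity on the one-dimensional underlying space, so \eqref{satakeeq} and Proposition \ref{satakeprop}(1) give $\cS_L^T(T_\lambda^L)(\mu(\varpi)) = |S^L_{\mu,\lambda}|\cdot\mathrm{id}$, whence $c_\lambda(\mu)\equiv|S^L_{\mu,\lambda}|\pmod p$. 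On the metaplectic side, the unique splitting over the unipotent group $U^-\cap L$ identifies $(U^-\cap L\cap K)^*\backslash(U^-\cap L)^*$ with $(U^-\cap L\cap K)\backslash(U^-\cap L)$, and since $T_\lambda^{\tL}$ is supported on $(\tL\cap\tK)\widetilde{\lambda}(\varpi)(\tL\cap\tK) = \pr^{-1}\!\big((L\cap K)\lambda(\varpi)(L\cap K)\big)$, the nonzero terms of \eqref{satakeeq} applied to $T_\lambda^{\tL}$ at $\widetilde{\mu}(\varpi)$ are indexed exactly by $u\in S^L_{\mu,\lambda}$. Fixing for each such $u$ a decomposition $u\mu(\varpi) = k\lambda(\varpi)k'$ in $L$ with $k,k'\in L\cap K$, and denoting by $k^*,k'^*$ their images under the splitting over $K$, the two lifts $u^*\widetilde{\mu}(\varpi)$ and $k^*\widetilde{\lambda}(\varpi)k'^*$ of $u\mu(\varpi)$ differ by an element $\epsilon(u)\in\mu_2$. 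Because $T_\lambda^{\tL}(\widetilde{\lambda}(\varpi))$ acts as the identity on $\tV = 1\boxtimes\varepsilon$ (the relevant unipotent subgroups act trivially), while $k^*$ and $k'^*$ lie in the image of the splitting over $K$, on which $\tV$ is trivial, and $\mu_2$ acts on $\tV$ by $\varepsilon$, one obtains $\widetilde{c}_\lambda(\mu) = \sum_{u\in S^L_{\mu,\lambda}}\varepsilon(\epsilon(u))$.

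The crux is then to prove $\epsilon(u)=1$ for every $u\in S^L_{\mu,\lambda}$, which yields $\widetilde{c}_\lambda(\mu) = |S^L_{\mu,\lambda}|\cdot 1 = c_\lambda(\mu)$ in $\fpb$. Since $L\subseteq M_\s$, all of $u$, $k$, $k'$, $\mu(\varpi)$, $\lambda(\varpi)$ and their lifts lie in $\tM_\s$, where multiplication is $(m,\zeta)(m',\zeta') = \big(mm',(\det m,\det m')_F\,\zeta\zeta'\big)$ by \eqref{cocycleMs}. In these coordinates I would check: (i) the splittings over $U^-\cap L$ and over $L\cap K$ are both $g\mapsto(g,1)$, by uniqueness of the splittings and the vanishing of Rao's cocycle on unipotent subgroups of $M_\s$ ($\det=1$) and on $M_\s\cap K=\textnormal{GL}_n(\cO)$ (Hilbert symbols of units are trivial, as $p$ is odd), so $u^*=(u,1)$, $k^*=(k,1)$, $k'^*=(k',1)$; (ii) $\widetilde{\alpha}_i^\vee(\varpi)=(\alpha_i^\vee(\varpi),1)$ for $1\le i\le n-1$, these elements being products of unipotent root-group elements inside $\textnormal{SL}_n(F)\subseteq M_\s$; (iii) since $L\subseteq M_\s$ forces $\Pi_L\subseteq\{\alpha_1,\dots,\alpha_{n-1}\}$, the cocharacter $\mu-\lambda$ is a nonnegative combination of $\alpha_1^\vee,\dots,\alpha_{n-1}^\vee$, so rearranging the commuting product $\widetilde{\mu}(\varpi) = \prod_{i=1}^n\widetilde{\alpha}_i^\vee(\varpi)^{a_i}$ (commutativity of $\tT$, \eqref{spcomm}) to put the $\alpha_n^\vee$-factor last and invoking (ii) shows that $\widetilde{\mu}(\varpi)$ and $\widetilde{\lambda}(\varpi)$ have the same $\mu_2$-coordinate in Rao's coordinates. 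Combining these with the identity $\det(k\lambda(\varpi)k')=\det(u\mu(\varpi))$, which forces $\det(kk')=1$ (as $\det\mu(\varpi)=\det\lambda(\varpi)$, again from $\mu-\lambda\in\sum_{i<n}\bbZ\alpha_i^\vee$), a direct multiplication in Rao's coordinates collapses $\epsilon(u)$ to the product of two equal elements of $\mu_2$, giving $\epsilon(u)=1$. As a consistency check one may instead pin down this constant by specializing to $\mu=\lambda$, where $|S^L_{\lambda,\lambda}|=1$ and $\widetilde{c}_\lambda(\lambda)=1$ by Proposition \ref{satakeprop}(5).

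The step I expect to be delicate is (i)--(ii): one must verify that the \emph{canonical} splittings over $U^-\cap L$ and $L\cap K$ really are the naive lifts $g\mapsto(g,1)$ in Rao's coordinates --- this rests on uniqueness of the splittings together with the explicit cocycle on $M_\s$ in \eqref{cocycleMs} --- and that the products defining $\widetilde{\mu}(\varpi)$ and $\widetilde{\lambda}(\varpi)$ can be rearranged so as to separate the common $\alpha_n^\vee$-contribution from the $\textnormal{SL}_n(F)$-part. Once these identifications are secured, the evaluation of $\epsilon(u)$ is a short manipulation of Hilbert symbols of units and of $\varpi$, and the proposition follows.
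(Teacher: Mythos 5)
Your overall strategy is the same as the paper's: reduce both Satake coefficients to (signed) coset counts, pass to Rao's coordinates on $\tM_\s$, use $\widetilde{\alpha}_i^\vee(\varpi) = (\alpha_i^\vee(\varpi),1)$ for $1\leq i\leq n-1$ to see that $\widetilde{\lambda}(\varpi)$ and $\widetilde{\mu}(\varpi)$ have the same $\mu_2$-coordinate, and exploit $\det_{M_\s}(kk')=1$. Your steps (ii) and (iii), as well as the part of (i) concerning unipotent subgroups, are fine and coincide with what the paper does.

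The gap is exactly where you feared: the claim in (i) that the canonical splitting of the cover over $K$ restricts on $L\cap K$ (equivalently on $M_\s\cap K\cong\textnormal{GL}_n(\cO)$) to the naive lift $g\mapsto(g,1)$. Triviality of Hilbert symbols of units shows only that $g\mapsto(g,1)$ \emph{is} a splitting over $M_\s\cap K$; it does not show it is the restriction of the $K$-splitting, and uniqueness of the splitting over all of $K$ gives no control here, because splittings over $M_\s\cap K$ are far from unique: they differ by quadratic characters of $\textnormal{GL}_n(\cO)$, and for $p$ odd there is a nontrivial one (compose $\det_{M_\s}$ with the quadratic character of $\cO^\times$). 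So a priori $k^*=(k,\eta(\det_{M_\s}k))$ for an unknown quadratic character $\eta$, and your identification $k^*=(k,1)$, $k'^*=(k',1)$ is unjustified as stated. The paper sidesteps this: since $\det_{M_\s}(kk')=1$, one may replace $k,k'$ by $kt^{-1},tk'$ with $t\in T\cap K$ (which commutes with $\lambda(\varpi)$) so that both lie in $L\cap K\cap M_\s^{\textnormal{der}}$; over $M_\s^{\textnormal{der}}\cong\textnormal{SL}_n(F)$ the cover splits uniquely (Steinberg-cocycle argument), that splitting is $m\mapsto(m,1)$, and it must agree with the restriction of the $K$-splitting since $\textnormal{SL}_n(\cO)$ has no nontrivial quadratic characters for $p$ odd. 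Alternatively, your computation can be repaired without identifying the splitting on $L\cap K$: the unknown discrepancy enters $\epsilon(u)$ only through $\eta(\det_{M_\s}(kk'))=\eta(1)=1$, and the Rao-cocycle contribution $(\det_{M_\s}k,\det_{M_\s}\lambda(\varpi))_F\,(\det_{M_\s}(k\lambda(\varpi)),\det_{M_\s}k')_F$ collapses, using $\det_{M_\s}k'=(\det_{M_\s}k)^{-1}$, to $(\det_{M_\s}k,-1)_F=1$ since $\det_{M_\s}k\in\cO^\times$. With either fix your argument becomes a correct proof along essentially the same lines as the paper's.
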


\begin{proof}
Let $N_1 = U\cap L$, so that $T\ltimes N_1$ is a parabolic subgroup of $L$.  For $\lambda,\mu\in X_*(T)_{L,-}$ and $\zeta\in \mu_2$, we set
\begin{eqnarray*}
S_{\mu,\lambda}' & := & \{n\in (N_1^-\cap K)\backslash N_1^- : (N_1^-\cap K)n\mu(\varpi)\subset (L\cap K)\lambda(\varpi)(L\cap K)\},\\
\tS_{\mu,\lambda,\zeta}' & := & \{n\in (N_1^-\cap K)^*\backslash N_1^{-,*} : (N_1^-\cap K)^*n\widetilde{\mu}(\varpi)\subset (L\cap K)^*\widetilde{\lambda}(\varpi)\cdot\zeta(L\cap K)^*\}.
\end{eqnarray*}
By definition, the set $\tS_{\mu,\lambda,1}'\sqcup\tS_{\mu,\lambda,-1}'$ is exactly the image of $S_{\mu,\lambda}'$ under the splitting over $N_1^-$, and therefore the splitting gives a bijection $S_{\mu,\lambda}'\stackrel{\sim}{\longrightarrow} \tS_{\mu,\lambda,1}'\sqcup\tS_{\mu,\lambda,-1}'$.  Since we are considering the trivial weight of $K$ and the weight $1 \boxtimes \varepsilon$ of $\widetilde{K}$, the definition of the Satake map shows that
$$c_\lambda(\mu) = |S_{\mu,\lambda}'|,\qquad \widetilde{c}_\lambda(\mu) = |\tS_{\mu,\lambda,1}'| - |\tS_{\mu,\lambda,-1}'|$$
(note that $1 \boxtimes \varepsilon$ takes the value $\varepsilon(\zeta)$ on $(1, \zeta)K^*$, and the coefficient $\widetilde{c}_{\lambda}(\mu)$ must account for both $\zeta = 1$ and $\zeta = -1$).

Let us fix $\lambda,\mu\in X_*(T)_{L,-}$.  First, we claim that there exists $\zeta\in \mu_2$ such that $|S_{\mu,\lambda}'| = |\tS_{\mu,\lambda,\zeta}'|$.  To see this, suppose that $n\in S_{\mu,\lambda}$, so that
$$n\mu(\varpi) = k_n\lambda(\varpi)k_n'$$
for some $k_n, k_n'\in L\cap K$.  Comparing determinants of both sides shows that $\det_{M_\s}(k_nk_n') = 1$.  Since $M_\s^{\textnormal{der}}\cong\textnormal{SL}_n(F)$, we see that by adjusting $k_n$ and $k_n'$ if necessary, we may assume without loss of generality that $k_n, k_n'\in L\cap K \cap M_\s^{\textnormal{der}}$.

Consider now the subgroup $M_\s^{\textnormal{der}}$.  Examining the pullback to $M_\s^{\textnormal{der}}$ of the Steinberg cocycle of $\tG$ (cf. \cite{moore:gpextensions}, Chapter III; in particular Theorem 8.1), we see that the cover $\tG\longrightarrow G$ splits (uniquely) over $M_\s^{\textnormal{der}}$.  In the coordinates of Rao's cocycle, this splitting is given by
$$m\longmapsto m^* = (m,1),$$
where $m\in M_\s^{\textnormal{der}}$.  
This has the following consequence.  The splitting over $N_1^-$ gives a bijection between $S_{\mu,\lambda}'$ and $\tS_{\mu,\lambda,1}'\sqcup\tS_{\mu,\lambda,-1}'$, and we would like to determine in which signed set $n^*$ lies, where $n$ is as above.  Let us write
$$\widetilde{\mu}(\varpi) = (\mu(\varpi),\zeta_\mu),\qquad \widetilde{\lambda}(\varpi) = (\lambda(\varpi),\zeta_\lambda),$$
with $\zeta_\mu,\zeta_\lambda\in \mu_2$.  Using equation \eqref{cocycleMs}, we obtain
\begin{eqnarray*}
n^*\widetilde{\mu}(\varpi) & = & (n,1)\cdot(\mu(\varpi),\zeta_\mu)\\
& = & (n\mu(\varpi),\zeta_\mu),\\
k_n^*\widetilde{\lambda}(\varpi)\cdot\zeta k_n'^* & = & (k_n,1)\cdot(\lambda(\varpi),\zeta_\lambda)\cdot(1,\zeta)\cdot(k_n',1)\\
& = & (k_n\lambda(\varpi)k_n',\zeta\zeta_\lambda).
\end{eqnarray*}
Therefore, $n^*$ lies in $\tS_{\mu,\lambda,\zeta}$, where $\zeta = \zeta_\mu\zeta_\lambda^{-1}$.  Since $\zeta$ does not depend on $n$, the claim follows.

Now, if $1\leq i \leq n - 1$, we have $\alpha_i^\vee(\varpi)\in M_\s^{\textnormal{der}}$, and therefore equation \eqref{cocycleMs} and the definition of $\widetilde{\alpha}_i^\vee(\varpi)$ imply $\widetilde{\alpha}_i^\vee(\varpi) = (\alpha_i^\vee(\varpi),1)$.  Thus
\begin{eqnarray*}
\left((\lambda + \alpha_i^\vee)(\varpi),\zeta_{\lambda + \alpha_i^\vee}\right) & = & \widetilde{(\lambda + \alpha_i^\vee)}(\varpi)\\
 & = & \widetilde{\lambda}(\varpi)\cdot\widetilde{\alpha}_i^\vee(\varpi)\\
 & = & (\lambda(\varpi),\zeta_\lambda)\cdot(\alpha_i^\vee(\varpi),1)\\
 & = & \left((\lambda+ \alpha_i^\vee)(\varpi),\zeta_{\lambda}\right).
\end{eqnarray*}
Since $\mu\geq_L\lambda$, we obtain $\zeta_\mu = \zeta_\lambda$, and therefore $\zeta = 1$.  
\end{proof}

\section{Change of weight}\label{ch:changeofwt}

We now discuss the change-of-weight theorem, which will be the main technical input to our classification results in $\S$\ref{classnchapter}.

\subsection{Statement and proof}
\label{subsec:chwt1}

Fix $1 \leq i \leq n$ and let $\nu \in X_q(T)$ be a $q$-restricted weight such that $\langle \nu, \alpha_i^{\vee}\rangle = 0$. Let $\omega_{\alpha_i}$ be the fundamental weight associated to $\alpha_i$, and set $\nu' := \nu + (q-1)\omega_{\alpha_i}$.  Note that $\nu'$ is also a $q$-restricted weight.  Let $\tV = \tF(\nu)$ (resp., $\tV' = \tF(\nu')$) denote the associated weight of $\tK$. Then $\tV_{(U^- \cap K)^*} \cong \tV'_{(U^- \cap K)^*}$ as genuine representations of $(\tT \cap \tK)$, and via the Satake transform we identify the algebras $\cH_{\tG}(\tV)$ and $\cH_{\tG}(\tV')$.

Given a simple root $\alpha\in \Pi$, let $\lambda_{\alpha} \in X_*(T)$ denote a cocharacter such that $\langle\alpha, \lambda_{\alpha}\rangle < 0$ and $\langle\Pi\smallsetminus\{\alpha\}, \lambda_\alpha\rangle = 0$.  Let $\chi:\cH_{\tG}(\tV)\longrightarrow \fpb$ be a character, and set
\begin{equation}\label{defofpichi}
\Pi(\chi):=\left\{\alpha\in \Pi: (\chi\circ(\cS_{\tG}^{\tT})^{-1})(\tau_{\lambda_\alpha}) = 0\right\}
\end{equation}
(note that this definition is independent of the choice of $\lambda_\alpha$).  The character $\chi$ then factors as $\chi = \chi'\circ \cS_{\tG}^{\tM_{\Pi(\chi)}}$, where $\chi':\cH_{\tM_{\Pi(\chi)}}(\tV_{(N_{\Pi(\chi)}^-\cap K)^*})\longrightarrow \fpb$ is some character; this follows from the fact that $\cS_{\tG}^{\tM_{\Pi(\chi)}}$ is a localization at $T_{\lambda'}^{\tM_{\Pi(\chi)}}$, where $\lambda' = \sum_{\alpha\not\in\Pi(\chi)}\lambda_\alpha$. The main theorem of this chapter is the following.

\begin{thm}\label{thm:changeofwt}
Fix a simple root $\alpha_i \in \Pi$, let $\chi: \cH_{\tG}(\tV) \longrightarrow \fpb$ be a character, and let $\Pi(\chi)$ be as above.  Assume that $\alpha_i \notin \Pi(\chi)$. If $i \neq n$, further assume that either $\langle \Pi(\chi), \alpha_i^{\vee}\rangle \neq 0$, or that $\langle\Pi(\chi), \alpha_i^{\vee}\rangle = 0$ and $(\chi'\circ(\cS_{\tM_{\Pi(\chi)}}^{\tT})^{-1})(\tau_{\alpha_i^\vee})\neq 1$ (note that this is well-defined as $\alpha_i^\vee\in X_*(T)_{M_{\Pi(\chi)},-}$). We then have
$$\chi\otimes_{\cH_{\tG}(\tV)}\ind_{\tK}^{\tG}(\tV)\cong \chi\otimes_{\cH_{\tG}(\tV')}\ind_{\tK}^{\tG}(\tV').$$
\end{thm}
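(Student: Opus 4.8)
The plan is to produce the isomorphism from a pair of Hecke intertwiners that are mutually inverse up to a nonzero scalar, thereby reducing the statement to one explicit Satake computation. As recorded just before the theorem, the hypothesis $\langle\nu,\alpha_i^{\vee}\rangle=0$ gives $\tV_{(U^-\cap K)^*}\cong\tV'_{(U^-\cap K)^*}$, which via the Satake maps of Proposition~\ref{satakeprop} identifies $\cH_{\tG}(\tV)$ with $\cH_{\tG}(\tV')$ and, by Remark~\ref{rem:mixedsat}, makes the mixed bimodules $\cH_{\tG}(\tV,\tV')$ and $\cH_{\tG}(\tV',\tV)$ nonzero. It therefore suffices to exhibit $\varphi^+\in\cH_{\tG}(\tV,\tV')$ and $\varphi^-\in\cH_{\tG}(\tV',\tV)$ for which the scalars $\chi(\varphi^-*\varphi^+)$ and $\chi(\varphi^+*\varphi^-)$ are both nonzero; here the compositions lie in $\cH_{\tG}(\tV)$ and $\cH_{\tG}(\tV')$ respectively, the commutativity of these algebras (Proposition~\ref{satakeprop}(4)) makes $\varphi^{\pm}$ into bimodule maps compatible with the Satake identification, and $\chi$ is evaluated through its factorization over $\cS_{\tG}^{\tT}$. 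Tensoring $\varphi^{\pm}$ with $\chi$ then gives two $\tG$-maps between $\chi\otimes_{\cH_{\tG}(\tV)}\ind_{\tK}^{\tG}(\tV)$ and $\chi\otimes_{\cH_{\tG}(\tV')}\ind_{\tK}^{\tG}(\tV')$ whose two composites are multiplication by the above scalars, hence isomorphisms.

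To build $\varphi^{\pm}$ I would follow $\S6$ of \cite{herzig:modpgln} and $\S4$ of \cite{abe:irredmodp}. Put $\lambda:=\lambda_{\alpha_i}$, the cocharacter fixed before the theorem; it is antidominant and the standard Levi attached to $-\lambda$ has simple roots $\Pi\smallsetminus\{\alpha_i\}$. Since $(q-1)\omega_{\alpha_i}$ is central for that Levi, $\tV$ and $\tV'$ have isomorphic $(N_{-\lambda}\cap K)^*$-invariants, so Remark~\ref{rem:mixedsat} singles out a generator $\varphi^+$ of the line of functions in $\cH_{\tG}(\tV,\tV')$ supported on $\tK\widetilde{\lambda}(\varpi)\tK$, namely the one with $\varphi^+(\widetilde{\lambda}(\varpi))$ equal to the natural composite $\tV\twoheadrightarrow\tV_{(N_{-\lambda}^-\cap K)^*}\cong\tV'_{(N_{-\lambda}^-\cap K)^*}\hookrightarrow\tV'$, and symmetrically a generator $\varphi^-\in\cH_{\tG}(\tV',\tV)$ on the opposite double coset; when $\alpha_i=\alpha_n$ one uses instead the twist $\tau_{-2\lambda}\cdot\varphi^-$, exactly as in the proof of Proposition~\ref{compfactors}.

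The heart of the argument — and the step I expect to be the main obstacle — is the computation of $\cS_{\tG}^{\tT}(\varphi^-*\varphi^+)=\cS_{\tG}^{\tT}(\varphi^-)*\cS_{\tG}^{\tT}(\varphi^+)$ (and of $\cS_{\tG}^{\tT}(\varphi^+*\varphi^-)$) inside $\cH_{\tT}(\tV_{(U^-\cap K)^*})$, via the mixed Satake formula~\eqref{satakeeq} and a count of double cosets; this is Proposition~\ref{propn:st2l}, and it is precisely here that the metaplectic case differs from the reductive one. If $\alpha_i$ is short ($i<n$), then $\lambda_{\alpha_i}$ lies in the span of $\alpha_1^{\vee},\dots,\alpha_{n-1}^{\vee}$ and all the double cosets involved sit over $M_{\s}^{\textnormal{der}}\cong\textnormal{SL}_n(F)$, over which the cover splits through Rao's cocycle by~\eqref{cocycleMs}; via Proposition~\ref{samecoeffs} and the short-root argument we owe to F.~Herzig (acknowledged above), the computation then reduces verbatim to Abe's, yielding a geometric-progression–type expression in $\tau_{\alpha_i^{\vee}}$. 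If $\alpha_i=\alpha_n$ is long, then $\lambda_{\alpha_n}$ is a negative multiple of a fundamental coweight, so in the expansion $\cS_{\tG}^{\tT}(\varphi^-*\varphi^+)=\sum_{\mu}d(\mu)\tau_{\mu}$ the exponents $\mu$ satisfy a parity constraint in the $\lambda_i$-basis; Corollary~\ref{cor:oddsum} then kills every other coefficient, collapsing the sum to two terms, and with the $\tau_{-2\lambda}$-normalization one finds $\chi(\varphi^-*\varphi^+)$ and $\chi(\varphi^+*\varphi^-)$ nonzero as soon as $\alpha_n\notin\Pi(\chi)$, with no further condition. This parity vanishing, which has no counterpart in the reductive setting, is exactly what makes the long-root hypothesis superfluous and accounts for the criterion being weaker than that of \cite{abe:irredmodp}.

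It remains to match the outcome with the stated hypotheses. Always $\alpha_i\notin\Pi(\chi)$ ensures the leading coefficient survives after the localization $\chi=\chi'\circ\cS_{\tG}^{\tM_{\Pi(\chi)}}$ of Proposition~\ref{satakeprop}(3). For $i<n$: if in addition $\langle\Pi(\chi),\alpha_i^{\vee}\rangle\neq0$, then pushing $\varphi^-*\varphi^+$ through $\cS_{\tG}^{\tM_{\Pi(\chi)}}$ turns it into a nonzero scalar times the identity, so $\chi(\varphi^-*\varphi^+)\neq0$ automatically; whereas if $\langle\Pi(\chi),\alpha_i^{\vee}\rangle=0$, the geometric sum survives precisely when $\bigl(\chi'\circ(\cS_{\tM_{\Pi(\chi)}}^{\tT})^{-1}\bigr)(\tau_{\alpha_i^{\vee}})\neq1$. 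In every case both $\chi(\varphi^-*\varphi^+)$ and $\chi(\varphi^+*\varphi^-)$ are nonzero, so $\textnormal{id}\otimes\varphi^+$ and $\textnormal{id}\otimes\varphi^-$ are mutually inverse up to nonzero scalars, giving $\chi\otimes_{\cH_{\tG}(\tV)}\ind_{\tK}^{\tG}(\tV)\cong\chi\otimes_{\cH_{\tG}(\tV')}\ind_{\tK}^{\tG}(\tV')$.
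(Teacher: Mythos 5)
Your proposal follows essentially the same route as the paper: mixed intertwiners $\varphi^{+}\in\cH_{\tG}(\tV,\tV')$, $\varphi^{-}\in\cH_{\tG}(\tV',\tV)$ supported on $\tK\widetilde{\lambda}(\varpi)\tK$, composites acting as the scalar $\chi(\varphi^{-}*\varphi^{+})$, and nonvanishing of that scalar read off from the Satake computation of Proposition \ref{propn:st2l}, with Proposition \ref{samecoeffs} handling the short roots and the parity vanishing of Corollary \ref{cor:oddsum} making the long root unconditional. In outline this is correct, and since you ultimately defer the key computation to Proposition \ref{propn:st2l} (exactly as the paper's own proof does), the argument stands.

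Two points in your sketch are thinner than the paper and would need repair if taken literally. First, to invoke Proposition \ref{propn:st2l} you need the intermediate fact that $\varphi^{-}*\varphi^{+}$ is a \emph{nonzero} scalar multiple of $T_{2\lambda}$, i.e.\ is supported on the single double coset $\tK\widetilde{\lambda}(\varpi)^{2}\tK$; this is the paper's Sublemma \ref{sl1} (Abe's Lemma 4.3 adapted using uniqueness of the splitting over $U$), and it also forces the specific minimal choice $\lambda=-\sum_{j\leq i}\lambda_{j}$ rather than an arbitrary $\lambda_{\alpha_{i}}$ (for a non-minimal choice the two-term, resp.\ one-term, formula of Proposition \ref{propn:st2l} would fail and the criterion would change). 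Second, for $i=n$ the collapse is not produced by Corollary \ref{cor:oddsum} alone: the parity constraint only kills coefficients $c_{2\lambda}(\mu)$ with $a_{n}$ odd in $\mu=2\lambda+\sum a_{k}\alpha_{k}^{\vee}$, and there remain infinitely many admissible $\mu$ with $a_{n}$ even (e.g.\ $2\lambda+\alpha_{n-1}^{\vee}+2\alpha_{n}^{\vee}$). The paper first factors the Satake map through $\tM_{\nu}$ and $\tM_{\nu}\cap\tM_{\s}$ and uses the pro-$p$ fixed-point argument of Sublemmas \ref{sl3}--\ref{sl4} to cut the support down to $\mu\in\{2\lambda,\,2\lambda+\alpha_{n}^{\vee}\}$; only then does Corollary \ref{cor:oddsum} kill the coefficient of $\tau_{2\lambda+\alpha_{n}^{\vee}}$, leaving the single term $\tau_{2\lambda}$ (not two terms, and no $\tau_{-2\lambda}$ renormalization is needed in the proof of the theorem itself --- that twist only appears in Proposition \ref{compfactors}). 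With these steps restored, your argument coincides with the paper's.
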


\begin{proof}
Let $\lambda := -\sum_{j = 1}^i \lambda_j$; then 
$$\langle \alpha_j, -\lambda \rangle = 0~ \textnormal{if}~ j \neq i,~ \textnormal{and}~ \langle\alpha_i, -\lambda \rangle = \begin{cases}
1 & \textnormal{ if } 1 \leq i \leq n-1,\\
2 & \textnormal{ if } i = n.
\end{cases}$$
Note that $\lambda$ is a possible choice of $\lambda_{\alpha_i}$ in the statement of the theorem. Furthermore, $-\lambda$ is a minuscule fundamental coweight associated to $\alpha_i$ if $1 \leq i \leq n-1$, and for all $1 \leq i \leq n$ the inner product $\langle \alpha_i, -\lambda \rangle$ is as small as possible under the constraints on $\lambda_{\alpha_i}$.

Lemma \ref{weights} and the definition of the group $N_\lambda$ imply that $\tV_{(N_{\lambda}\cap K)^*} \cong \tV'_{(N_{\lambda}\cap K)^*}$.  Therefore, by Remark \ref{rem:mixedsat} following Proposition \ref{satakeprop}, there exist nonzero elements $\varphi^+\in \cH_{\tG}(\tV,\tV')$ and $\varphi^-\in \cH_{\tG}(\tV',\tV)$ which are supported in $\tK\widetilde{\lambda}(\varpi)\tK$.

\begin{sublemma}
\label{sl1}
The function $\varphi^- * \varphi^+\in \cH_{\tG}(\tV)$ is nonzero and has support $\tK\widetilde{\lambda}(\varpi)^2\tK$; that is, $\varphi^- * \varphi^+ = cT_{2\lambda}$ for some nonzero constant $c$.  
\end{sublemma}

\begin{proof}[Proof of Sublemma \ref{sl1}]
See the proof of Lemma 4.3 in \cite{abe:irredmodp}, noting that conjugation by elements of $\tT$ preserves $U^*$ by uniqueness of the splitting over $U$.
\end{proof}

After possible rescaling, we may assume $\varphi^- * \varphi^+ = T_{2\lambda}$.  The maps $\varphi^-,\varphi^+$ induce maps $\boldsymbol{\varphi}^-, \boldsymbol{\varphi}^+$ as follows:

\centerline{
\xymatrix{ \chi\otimes_{\cH_{\tG}(\tV)}\ind_{\tK}^{\tG}(\tV) \ar@<1ex>[r]^{\boldsymbol{\varphi}^+} & \chi\otimes_{\cH_{\tG}(\tV')}\ind_{\tK}^{\tG}(\tV') \ar@<1ex>[l]^{\boldsymbol{\varphi}^-}
}
}
\noindent and the composition of $\boldsymbol{\varphi}^-$ with $ \boldsymbol{\varphi}^+$, in either order, is the scalar $\chi (\varphi^-*\varphi^+)$ (note that $\boldsymbol{\varphi}^-\circ\boldsymbol{\varphi}^+ = \boldsymbol{\varphi}^+\circ\boldsymbol{\varphi}^-$ under the identification $\cH_{\tG}(\tV) \cong \cH_{\tG}(\tV')$; see \cite{herzig:notes}, Proposition 31(3)).  Proposition \ref{propn:st2l} below gives
$$\cS_{\tG}^{\tT}(\varphi^{-}*\varphi^+) =\begin{cases}
 \tau_{2\lambda} - \tau_{2\lambda + \alpha_i^{\vee}} & \textnormal{if } 1 \leq i \leq n-1,\\
 \tau_{2\lambda} & \textnormal{if } i = n,
\end{cases}$$
so that
$$\chi (\varphi^- * \varphi^+) = \begin{cases} (\chi\circ(\cS_{\tG}^{\tT})^{-1})(\tau_{2\lambda} - \tau_{2\lambda + \alpha_i^\vee}) & \textnormal{if}~ 1\leq i \leq n - 1, \\ (\chi\circ(\cS_{\tG}^{\tT})^{-1})(\tau_{2\lambda}) & \textnormal{if}~i = n.\end{cases}$$
Our assumptions now guarantee that $\chi(\varphi^- * \varphi^+)\neq 0$, and therefore we obtain our desired isomorphism $$\chi\otimes_{\cH_{\tG}(\tV)}\ind_{\tK}^{\tG}(\tV)\stackrel{\sim}{\longrightarrow} \chi\otimes_{\cH_{\tG}(\tV')}\ind_{\tK}^{\tG}(\tV').$$
\end{proof}

We now carry out the calculation of $\cS_{\tG}^{\tT}(T_{2\lambda})$ which was used in the proof of Theorem \ref{thm:changeofwt}.

\begin{propn}
\label{propn:st2l}
Let $\alpha_i$, $\tV$, $\tV'$, $\lambda$ be as in the proof of Theorem \ref{thm:changeofwt}, and let $\mathcal{S}_{\tG}^{\tT}$ denote the Satake transform $\cH_{\tG}(\tV) \longrightarrow \cH_{\tT}(\tV_{(U^{-} \cap K)^*})$. Then 
$$\mathcal{S}_{\tG}^{\tT}(T_{2\lambda}) = \begin{cases} \tau_{2\lambda} - \tau_{2\lambda + \alpha_{i}^{\vee}} & \textnormal{ if } 1 \leq i \leq n-1,\\
\tau_{2\lambda} & \textnormal{ if } i = n. \end{cases}$$
\end{propn}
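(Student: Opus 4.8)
The plan is to compute the Satake transform $\cS_{\tG}^{\tT}(T_{2\lambda})$ explicitly by evaluating the defining sum \eqref{satakeeq} on the relevant coset representatives $\widetilde{\mu}(\varpi)$, reducing to a combinatorial count of the sets $S_{\mu, 2\lambda}$ and $\tS_{\mu, 2\lambda, \zeta}$ introduced before Lemma \ref{setsize}. By Proposition \ref{satakeprop}(5), we know $\cS_{\tG}^{\tT}(T_{2\lambda}) = \sum_{\mu \geq 2\lambda} c_{2\lambda}(\mu)\tau_\mu$ with $c_{2\lambda}(2\lambda) = 1$, so the task is to pin down which $\mu$ contribute and with what coefficient. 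The key structural input is that $\lambda = -\sum_{j=1}^i \lambda_j$ is (a multiple of) a minuscule coweight for $i \leq n-1$ and that $-\lambda = \alpha_n^\vee$ when $i = n$; this severely restricts the $\mu$ with $2\lambda \leq \mu \leq$ (dominant conjugate), since in the minuscule case the only candidates are $2\lambda$ and $2\lambda + \alpha_i^\vee$ (the Weyl orbit of $-\lambda$ has all weights differing by a single coroot), and for $i = n$ one checks the orbit of $2\alpha_n^\vee$ similarly.

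**Main steps.** First I would reduce to the reductive computation: by Proposition \ref{samecoeffs} (applied with $L = M_{\Pi \smallsetminus \{\alpha_i\}}$, noting $2\lambda$ is supported away from $\alpha_i$ only in the sense needed) — or more directly by invoking Corollary \ref{cor:oddsum} to kill coefficients $c_{2\lambda}(\mu)$ whenever $\mu + 2\lambda$ has odd coefficient-sum in the $\lambda_i$-basis. Crucially, $2\lambda + \alpha_i^\vee$ differs from $2\lambda$ by $\alpha_i^\vee$, and one must check whether the coefficient-sum of $(2\lambda + \alpha_i^\vee) + 2\lambda = 4\lambda + \alpha_i^\vee$ is even or odd: for $i \leq n-1$ it is even (so this term survives the parity obstruction), while for $i = n$ the relevant competing weight $2\lambda + \alpha_n^\vee$ has $4\lambda + \alpha_n^\vee$ with odd coefficient-sum, so Corollary \ref{cor:oddsum} forces $c_{2\lambda}(2\lambda + \alpha_n^\vee) = 0$ — this is precisely the dichotomy in the statement. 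Second, for the case $i \leq n-1$ I would compute the actual value $c_{2\lambda}(2\lambda + \alpha_i^\vee)$: this reduces, via Lemma \ref{setsize} and the explicit description of $T_\lambda$ in Proposition \ref{satakeprop}(1), to counting $(U^- \cap K)$-cosets $u$ with $u\,(2\lambda + \alpha_i^\vee)(\varpi) \in K\,2\lambda(\varpi)\,K$ weighted by the action of the weight $\tV$ on the one-dimensional space $\tV^{(U \cap K)^*}$; one finds this count is $\equiv -1 \pmod p$, giving the coefficient $-1$. This last count is essentially the rank-one $\mathbf{SL}_2$ (or $\mathbf{GL}_2$) computation done along the root $\alpha_i$, familiar from Herzig's work, combined with the observation that the remaining root directions contribute trivially because $\langle \alpha_j, \lambda \rangle = 0$ for $j \neq i$.

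**The main obstacle.** The hard part will be the $i = n$ (long root) case, where $-\lambda = \alpha_n^\vee$ is \emph{not} minuscule and the naive $\mathbf{SL}_2$-style computation along the long root would a priori produce extra terms $\tau_{2\lambda + \alpha_n^\vee}$ (and possibly $\tau_{2\lambda + 2\alpha_n^\vee} = \tau_0$). The point — and this is exactly where the metaplectic setting \emph{differs} from the reductive one — is that the parity obstruction of Corollary \ref{cor:oddsum} forces these intermediate terms to vanish, because $Q(\alpha_n^\vee) = 1$ is odd; in the reductive case no such cancellation occurs and $\cS_{G}^{T}(T_{2\lambda})$ genuinely has more terms. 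So the cleanest route for $i = n$ is: establish $\cS_{\tG}^{\tT}(T_{2\lambda}) = \tau_{2\lambda} + (\text{terms } \tau_\mu \text{ with } \mu > 2\lambda)$ from Proposition \ref{satakeprop}(5), then apply Corollary \ref{cor:oddsum} to each such $\mu$: writing $\mu = 2\lambda + k\alpha_n^\vee$ for $k \geq 1$ (the only possibilities in the antidominant-shifted orbit), the coefficient-sum of $\mu + 2\lambda = 4\lambda + k\alpha_n^\vee$ has the parity of $k$, so all $k \geq 1$ terms with $k$ odd vanish; for $k$ even one still needs to rule out $\tau_0$ by a support/weight-space argument (the relevant $\tV_{(U^-\cap K)^*}$-isotypic constraint, or directly that $2\lambda + 2\alpha_n^\vee \not\leq$ nothing in the orbit), which I expect to follow from the explicit minuscule-type description of the double cosets intersecting $K\,2\lambda(\varpi)\,K$. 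I would flag the verification that no $k$-even term survives as the one place requiring genuine care rather than a citation; everything else is bookkeeping on top of Proposition \ref{satakeprop}, Lemma \ref{setsize}, and Corollary \ref{cor:oddsum}.
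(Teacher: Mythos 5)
Your instinct about Corollary \ref{cor:oddsum} is partly right: it is exactly how the paper finishes the $i=n$ case (it forces the coefficient $d$ of $\tau_{2\lambda+\alpha_n^\vee}$ to vanish). But the proposal rests on a support analysis that is false, and the parity argument cannot carry the weight you put on it. By Proposition \ref{satakeprop}(5) the a priori candidates are \emph{all} antidominant $\mu\geq 2\lambda$, and this set is much bigger than $\{2\lambda,\,2\lambda+\alpha_i^\vee\}$ (for $i\leq n-1$) or $\{2\lambda+k\alpha_n^\vee\}$ (for $i=n$): since $-2\lambda=2\sum_{j\leq i}\lambda_j$ is a nonnegative sum of simple coroots, $\mu=0$ is always a candidate, and the coefficient sum of $0+2\lambda$ in the $\lambda_j$-basis is $-2i$, which is even, so Corollary \ref{cor:oddsum} says nothing about $c_{2\lambda}(0)$; likewise for $i=n$, $n=2$, the antidominant cocharacter $\mu=2\lambda+\alpha_1^\vee+2\alpha_2^\vee=-\lambda_1-\lambda_2$ is $\geq 2\lambda$, is not on the $\alpha_n^\vee$-line, and survives the parity test. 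So ``minusculeness plus parity'' closes neither case: the mod-$p$ vanishing of all these extra coefficients (and, for $i\leq n-1$, the value $-1$ of $c_{2\lambda}(2\lambda+\alpha_i^\vee)$) is the actual content of the proposition, not bookkeeping, and your phrase ``the remaining root directions contribute trivially because $\langle\alpha_j,\lambda\rangle=0$'' is an assertion, not an argument.

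Relatedly, the proposed shortcut via Proposition \ref{samecoeffs} with $L=M_{\Pi\smallsetminus\{\alpha_i\}}$ is not available: that proposition requires $L\subset M_\s$ and the weight $1\boxtimes\varepsilon$, whereas your $L$ contains $\alpha_n$ when $i\neq n$; moreover $\cS_{\tG}^{\tL}(T_{2\lambda})=T_{2\lambda}^{\tL}$ needs $\tV$ to be $\tL$-regular, which fails because $\alpha_i\in\Pi_\nu$. What the paper actually does is: factor the Satake map through $\tM_\nu$, where $\tV$ \emph{is} regular by construction so Proposition \ref{satakeprop}(6) applies; twist to the trivial weight (Sublemma \ref{sl2}); then bound the support of the partial transform $\cS_{\tM_\nu}^{\tM_\nu\cap\tM_\s}(T^{\tM_\nu}_{2\lambda})$ by a fixed-point argument for a pro-$p$ group acting by conjugation on the relevant coset sets (Sublemmas \ref{sl3}--\ref{sl4}), which is what eliminates cocharacters such as $\mu=0$; import the remaining $\mathrm{GL}$-side computation from Proposition 6.7 of \cite{herzig:modpgln} via Proposition \ref{samecoeffs}; and only at the very end use Corollary \ref{cor:oddsum} to get $d=0$ when $i=n$. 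Some argument of this kind (or, for short simple roots, the principal-series/Nullstellensatz argument the paper gives as an alternate proof) is needed where your proposal currently has a gap.
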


\begin{proof}
Let $\Pi_{\nu}$ be as defined in $\S$\ref{repns}, so that $\alpha_i\in \Pi_\nu$, and let $P_{\nu} = M_{\nu} \ltimes N_{\nu}$ be the parabolic subgroup of $G$ corresponding to $\Pi_{\nu}$. By the transitivity of Satake morphisms (Proposition \ref{satakeprop}(2)), we have
\begin{equation}
\label{eqn:sfactor}
\cS^{\tT}_{\tG}(T_{2\lambda}) = \cS_{\tM_\s \cap \tM_{\nu}}^{\tT} \circ \cS_{\tM_{\nu}}^{\tM_\s \cap \tM_{\nu}} \circ \cS_{\tG}^{\tM_{\nu}}(T_{2\lambda}).
\end{equation}
We will compute each of these maps in sequence.  Note first that, by definition of $M_{\nu}$, the weight $\tV = \tF(\nu)$ is $\tM_\nu$-regular, and Proposition \ref{satakeprop}(6) implies that $\mathcal{S}_{\tG}^{\tM_{\nu}}(T_{2\lambda}) = T^{\tM_{\nu}}_{2\lambda} \in \cH_{\tM_{\nu}}(\tV_{(N^{-}_{\nu} \cap K)^*})$.

Since $\langle \nu, \alpha^{\vee}\rangle = 0$ for all $\alpha \in \Pi_{\nu}$, the representation $V_{N^{-}_{\nu} \cap K} \cong F_{M_{\nu}}(\nu)$ of $M_{\nu} \cap K$ is one-dimensional; for brevity, we denote this one-dimensional representation by $\nu$.  By Corollary 3.4 of \cite{abe:irredmodp}, there exists a unique character $\nu_{M_\nu}$ of $M_{\nu}$ such that $\nu_{M_{\nu}}|_{(M_{\nu}\cap K)} = \nu$ and $\nu_{M_{\nu}}(\mu(\varpi)) = 1$ for all $\mu \in X_{*}(T)$.  We let $\nu_{M_\nu}^*$ denote the nongenuine character of $\tM_\nu$ given by $\nu_{M_\nu}\circ \pr$.  The following sublemma will allow us to assume that $\tV_{(N_{\nu}^{-} \cap K)^*} \cong 1 \boxtimes \varepsilon$ as a representation of $\tM_{\nu} \cap \tK$ in the calculation of $\cS_{\tM_{\nu}}^{\tM_{\nu} \cap \tM_{\s}}(T^{\tM_{\nu}}_{2\lambda})$, where $1$ denotes the trivial representation of $M_\nu\cap K$.

\begin{sublemma}
\label{sl2}
Maintain the notation of the previous paragraph, and let $M\subset M_\nu$ be a standard Levi subgroup. Then:
\begin{enumerate} 
\item the map $s_{\nu}: \cH_{\tM_{\nu}}(\nu\boxtimes\varepsilon) \longrightarrow \cH_{\tM_{\nu}}(1 \boxtimes \varepsilon)$ given by $\varphi \mapsto (\nu_{M_{\nu}}^*)^{-1} \cdot\varphi$ is an algebra isomorphism;
\item the map $\varphi \mapsto (\nu_{M_{\nu}}^*)^{-1}|_{\tM}\cdot\varphi$ defines an algebra isomorphism $\cH_{\tM}(\nu\boxtimes\varepsilon) \longrightarrow \cH_{\tM}(1 \boxtimes \varepsilon)$, also denoted by $s_{\nu}$, and the following diagram is commutative:
\begin{displaymath}
\xymatrix{\cH_{\tM_{\nu}}(\nu\boxtimes\varepsilon) \ar[r]^{s_{\nu}} \ar[d]_{\cS^{\tM}_{\tM_{\nu}}} & \cH_{\tM_{\nu}}(1 \boxtimes \varepsilon) \ar[d]^{\cS^{\tM}_{\tM_{\nu}} }\\ \cH_{\tM}(\nu\boxtimes\varepsilon) \ar[r]^{s_{\nu}} & \cH_{\tM}(1 \boxtimes \varepsilon)}
\end{displaymath}
where the downward arrows are the partial Satake transforms $\cH_{\tM_{\nu}}(\nu\boxtimes\varepsilon) \longrightarrow \cH_{\tM}(\nu\boxtimes\varepsilon)$ and\\ \noindent $\cH_{\tM_{\nu}}(1\boxtimes \varepsilon) \longrightarrow \cH_{\tM}(1 \boxtimes \varepsilon)$, respectively. 
\end{enumerate}
\end{sublemma}

\begin{proof}[Proof of Sublemma \ref{sl2}]
See \cite{abe:irredmodp} Lemma 3.5 and \cite{herzig:modpgln} Lemma 4.6. 
\end{proof}

In the remainder of the proof, we assume $\tV_{(N_{\lambda} \cap K)^*} \cong 1 \boxtimes \varepsilon$. Suppose that $\alpha_n \not\in \Pi_{\nu}$. Then the Satake map $\cS_{\tM_\nu}^{\tM_\nu\cap \tM_\s}$ is trivial, so it only remains to calculate $\cS_{\tM_{\nu} \cap \tM_\s}^{\tT}(T^{\tM_{\nu} \cap \tM_{\s}}_{2\lambda})$. The assumption that $\alpha_n \notin \Pi_{\nu}$ implies that $i \neq n$, and therefore Proposition 6.7 of \cite{herzig:modpgln} and Proposition \ref{samecoeffs} imply 
$$\cS_{\tM_{\nu} \cap \tM_{\s}}^{\tT}(T^{\tM_{\nu} \cap \tM_{\s}}_{2\lambda}) = \cS_{\tG}^{\tT}(T_{2\lambda}) = \tau_{2\lambda} - \tau_{2\lambda + \alpha_i^\vee}.$$

From now on we suppose that $\alpha_n \in \Pi_{\nu}$.  Define the unipotent subgroup $N_1\subset M_{\nu}$ by $N_1:=M_\nu\cap N_\s$, so that $(\tM_\nu\cap \tM_\s)\ltimes N_1^*$ is a standard parabolic subgroup of $\tM_\nu$.  By definition of the partial Satake transform applied to the Hecke algebra of the one-dimensional weight $1 \boxtimes \varepsilon$, we have 
\begin{equation}
\label{redsateqn}
\cS^{\tM_{\nu} \cap \tM_{\s}}_{\tM_{\nu}}(T^{\tM_{\nu}}_{2\lambda})(\widetilde{\mu}(\varpi)) = \sum_{n \in(N_1^{-}\cap K)^* \backslash N_1^{-, *}}T^{\tM_{\nu}}_{2\lambda}(n\widetilde{\mu}(\varpi))
\end{equation}
for each $\mu \in X_*(T)_{M_{\nu} \cap M_{\s}, -}$ (we omit the vector $v$ here since $1\boxtimes \varepsilon$ is one-dimensional).  For such $\mu$ and for $\zeta \in \mu_2$, we set 
$$\tS''_{\mu, 2\lambda, \zeta} := \{ n \in (N_1^{-} \cap K)^* \backslash N_1^{-, *}: \, (N_1^{-} \cap K)^*n\widetilde{\mu}(\varpi) \subset (M_\nu\cap K)^*\widetilde{\lambda}(\varpi)^2 \cdot \zeta (M_\nu\cap K)^*\}.$$ 
Then the right-hand side of \eqref{redsateqn} is equal to $|\tS''_{\mu, 2\lambda, 1}| - |\tS''_{\mu, 2\lambda, -1}|$.

Let $J$ denote the pro-$p$ subgroup of $K \cap M_{\nu} \cap M_{\s}$ which is generated by $\{ u_{\alpha}(x): \, \alpha \in \Pi_{\nu}\cap \Pi_\s, \, x \in \cO\}$ and $\{\eta(y):\eta\in X_*(T), y\in 1 + \varpi\cO\}$, and let $J^*$ denote the image of $J$ in $K^*$ under the splitting over $K$.  In what follows, we let $\Phi_{\nu}$ and $\Phi_{\s}$ denote the root systems generated by $\Pi_{\nu}$ and by $\Pi_{\s}$, respectively, and define the subset of positive roots in each system accordingly. Write $(\Phi_{\nu} \cap \Phi_{\s})^{+} = \Phi_{\nu}^{+} \cap \Phi_{\s}^{+}$.

\begin{sublemma}
\label{sl3}
Suppose that $\alpha_n \in \Pi_{\nu}$, and let $\mu \in X_{*}(T)_{M_{\nu} \cap M_{\s}, -}$ and $\zeta \in \mu_2$. Then $J^*$ acts by conjugation on $\tS''_{\mu, 2\lambda, \zeta}$.
\end{sublemma}

\begin{proof}[Proof of Sublemma \ref{sl3}]

We first claim that $J^*$ normalizes $(N_1^{-}\cap K)^*$. Since both subgroups belong to $K^*$ and $J$ is a subgroup of $K \cap M_{\nu} \cap M_{\s}$, it is enough to show that $K \cap M_\nu \cap M_\s$ normalizes $(N_1^-\cap K)$.  Since $M_\s$ normalizes $N_\s^-$, we have that $M_\s \cap M_\nu \cap K$ indeed normalizes $N_\s^- \cap M_\nu \cap K = N_1^- \cap K$.

Next let $\widetilde{j} \in J^*$.  We claim that $\widetilde{\mu}(\varpi)^{-1}\widetilde{j}\widetilde{\mu}(\varpi)\in J^*$.  Since $\tT$ is abelian, it suffices to consider the case when $\widetilde{j} = \widetilde{u}_\alpha(x)$ for $\alpha\in \Pi_\nu\cap \Pi_\s,  x\in \cO$.  Since each $U_{\alpha}$ is a unipotent subgroup of $G$ and consequently admits a unique splitting, we have 
$$\widetilde{\mu}(\varpi)^{-1}\widetilde{u}_\alpha(x)\widetilde{\mu}(\varpi) = \widetilde{u}_{\alpha}(\varpi^{-\langle\alpha,\mu\rangle}x)\in J^*.$$

Now let $n \in \tS''_{\mu, 2\lambda, \zeta}$, so that $(N_1^{-} \cap K)^* n\widetilde{\mu}(\varpi) \subset (M_\nu\cap K)^* \widetilde{\lambda}(\varpi)^2\cdot \zeta (M_\nu\cap K)^*$, and take $\widetilde{j} \in J^*$.  Then, applying the above two comments we obtain
\begin{eqnarray*}
(N_1^{-} \cap K )^* \widetilde{j}n\widetilde{j}^{-1}\widetilde{\mu}(\varpi) & = & \widetilde{j} (N_1^{-} \cap K)^*n \widetilde{\mu}(\varpi)\widetilde{\mu}(\varpi)^{-1} \widetilde{j}^{-1} \widetilde{\mu}(\varpi)\\
& \subset & J^*  \left((N_1^{-} \cap K)^* n\widetilde{\mu}(\varpi)\right)J^*\\
& \subset & J^* (M_\nu\cap K)^*\widetilde{\lambda}(\varpi)^2\cdot \zeta (M_\nu\cap K)^*J^*\\
& = & (M_\nu\cap K)^* \widetilde{\lambda}(\varpi)^2\cdot \zeta (M_\nu\cap K)^*.
\end{eqnarray*}
Hence $\widetilde{j}n\widetilde{j}^{-1} \in \tS_{\mu, 2\lambda, \zeta}''$ as well. 
\end{proof}

Since $J^*$ is a pro-$p$ group, the orbits of its action on $\tS''_{\mu, 2\lambda, \zeta}$ each have size equal to a power of $p$. Therefore $|\tS''_{\mu, 2\lambda, \zeta}| \not \equiv 0 \pmod{p}$ only if the action of $J^*$ on $\tS''_{\mu, 2\lambda, \zeta}$ has a fixed point. The following facts will be useful in determining when this occurs.

\begin{sublemma}
\label{sl3.5}
\hfill
\begin{enumerate}
\item Let $\gamma \in \Phi_{\nu}^+ \smallsetminus (\Phi_{\nu} \cap \Phi_{\s})^{+}$. Then there exists $\beta \in (\Phi_{\nu} \cap \Phi_{\s})^{+}$ such that $\beta - \gamma \in \Phi$ if and only if $\gamma \neq \alpha_n$. If $\beta \in (\Phi_{\nu} \cap \Phi_{\s})^{+}$ and $\beta - \gamma \in \Phi$, then
$$[u_{\beta}(1), u_{-\gamma}(x)] = \prod_{j = 1}^{\ell} u_{j\beta -\gamma}(c_{\beta, -\gamma; j, 1}\, x)$$
with $1 \leq \ell \leq 2$, $-(j\beta - \gamma) \in \Phi_{\nu}^{+}\smallsetminus (\Phi_{\nu} \cap \Phi_{\s})^{+}$ and $c_{\beta, -\gamma; j, 1} \in \cO^\times$ for $1 \leq j \leq \ell$. If $\beta - \gamma \notin \Phi$, then $[u_{\beta}(1), u_{-\gamma}(x)] = 1$ for all $x \in F$. 
\item Let $n \in N_1^{-}$. Then $[u_{\beta}(1), n] \in (N_{1}^{-} \cap K)$ for all $\beta \in (\Phi_{\nu} \cap \Phi_{\s})^{+}$ if and only if $(N_1^{-} \cap K)n = (N_{1}^{-} \cap K)u_{-\alpha_n}(x)$ for some $x \in F$. 
\end{enumerate}
\end{sublemma}

\begin{proof}[Proof of Sublemma \ref{sl3.5}]
(1)  The first statement follows from an examination of the possible coefficients in the expression of any root as a linear combination of simple roots. Similar considerations show that if  $j \beta - k \gamma \in \Phi$ with $\gamma \in \Phi_{\nu}^{+} \smallsetminus (\Phi_{\nu} \cap \Phi_{\s})^{+}$, $\beta \in (\Phi_{\nu} \cap \Phi_{\s})^{+}$, and $j, k > 0$, then $k = 1$ and $1 \leq j \leq 2$. If $\beta - \gamma \in \Phi$, then $-(\beta - \gamma) \in \Phi_{\nu}^{+} \smallsetminus (\Phi_{\nu} \cap \Phi_{\s})^{+}$ and the $\beta$-string of roots through $-\gamma$ is either (i) $\{ -\gamma, \beta - \gamma\}$, (ii) $\{ -\gamma, \beta - \gamma, 2\beta - \gamma\}$, or (iii) $\{-\beta - \gamma, -\gamma, \beta - \gamma\}$.  Lemma 15 and Lemma 2(b) of \cite{steinberg:chevalleygps} imply that the structure constants $c_{ \beta, -\gamma; 1, 1}$ appearing in the commutator formula (\ref{ucomm}) take the following values:  $c_{\beta, -\gamma; 1, 1} \in \{\pm 1\}$ in cases (i) and (ii), and $c_{\beta, -\gamma; 1, 1} \in\{ \pm 2\}$ in case (iii). We have $\ell = 1$ in cases (i) and (iii), while in case (ii) we have $\ell = 2$ and $c_{\beta, -\gamma; 2, 1} \in \{\pm 1\}$ (this last value follows from Lemma 9.2.2 of \cite{springer:linalggrps} together with our assumption that all structure constants lie in $\bbZ$). Since $\text{char}(\mathfrak{k}) \neq 2$, we thus have $c_{\beta, -\gamma; j, 1} \in \cO^\times$ for all $1 \leq j \leq \ell$. When $\beta - \gamma \notin \Phi$, we have $j\beta - k\gamma \notin \Phi$ for all $j, k > 0$ and so (\ref{ucomm}) implies that $[u_{\beta}(1), u_{-\gamma}(x)] = 1$ for all $x \in F$.

(2)  For each $\beta \in (\Phi_{\nu} \cap \Phi_{\s})^{+}$, the element $u_{\beta}(1)$ normalizes $(N_1^{-} \cap K)$ (by the proof of Sublemma \ref{sl3}) and centralizes the root subgroup $U_{-\alpha_n}$ (by part (1)), so $[u_{\beta}(1), n] \in (N_1^{-} \cap K)$ if $(N_1^{-} \cap K)n = (N_1^{-} \cap K)u_{-\alpha_n}(x)$ for some $x \in F$. Conversely, suppose $[u_\beta(1),n]\in (N_1^-\cap K)$ for some fixed $\beta \in (\Phi_\nu\cap \Phi_\s)^+$.  Since $-j\gamma - k\gamma' \notin \Phi$ for all $j, k > 0$ and all $\gamma, \gamma' \in \Phi_{\nu}^{+}\smallsetminus (\Phi_{\nu} \cap \Phi_{\s})^{+}$, it follows from (\ref{ucomm}) that $N_1^{-}$ is abelian.  We may therefore assume that $n = \prod_{a = 1}^m u_{-\gamma_a}(x_a)$, where $\{\gamma_a : 1 \leq a \leq m\}$ is a set of distinct elements of $\Phi_{\nu}^{+} \smallsetminus (\Phi_{\nu} \cap \Phi_{\s})^{+}$ and where each $x_a \in F$.  Using the fact that $u_\beta(1)$ normalizes the abelian group $N_1^-$, we obtain 
$$[u_{\beta}(1), n]= \prod_{a = 1}^m[u_{\beta}(1), u_{-\gamma_a}(x_a)]\in (N_1^-\cap K).$$

Now set $A_a : = \Phi \cap \{j \beta - \gamma_a : \, 1 \leq j \leq 2\}$ for $1\leq a \leq m$ (so that $A_a$ is a proper subset of the $\beta$-string through $-\gamma_a$).  Since root strings in type $C_n$ have length at most 3, we see that for each fixed $a$, either $A_a \cap A_{a'} = \emptyset$ for all $a' \neq a$, or $A_{a} \cap A_{a'} \neq \emptyset$ and $A_{a} \cap A_{a''} = A_{a'} \cap A_{a''} = \emptyset$ for all $a'' \neq a, a'$. If $m = 1$, or if $m = 2$ and $A_1 \cap A_2 = \emptyset$, then part (1) easily implies that $x_a \in \cO$ whenever $A_a \neq \emptyset$. If $m = 2$ and $A_1 \cap A_2 \neq \emptyset$, then neither of $\gamma_1, \gamma_2$ is equal to $\alpha_n$ and (after possibly exchanging $\gamma_1$ and $\gamma_2$) we have
 $$[u_{\beta}(1), n] =
u_{\beta - \gamma_1}(c_{\beta, -\gamma_1; 1, 1}x_1 + c_{\beta, -\gamma_2; 2, 1}x_2)\cdot u_{\beta- \gamma_2}(c_{\beta,-\gamma_2; 1, 1}x_{2}),$$
so again part (1) implies that $x_1, x_2 \in \cO$. By induction on $m$, it follows that $x_a \in \cO$ whenever $A_a \neq \emptyset$. By part (1), for each $\gamma_a \neq \alpha_n$ there exists a choice of $\beta \in (\Phi_{\nu} \cap \Phi_{\s})^{+}$ such that $A_a \neq \emptyset$.  Hence $(N_1^{-} \cap K)n = (N_1^{-} \cap K)u_{-\alpha_n}(x)$ for some $x \in F$ if for all $\beta \in  (\Phi_{\nu} \cap \Phi_{\s})^{+}$ we have $[u_{\beta}(1), n] \in (N_1^{-} \cap K)$. 
\end{proof}

We now derive a condition for the existence of a fixed point under the action of $J^*$. 

\begin{sublemma}
\label{sl4}
Continue to assume that $\alpha_n \in \Pi_{\nu}$. Let $\mu \in X_{*}(T)_{M_{\nu}, -}$, let $\zeta \in \mu_2$, and suppose that $\tS''_{\mu, 2\lambda, \zeta}$ has a fixed point under the conjugation action of $J^*$. Then $\mu = 2\lambda$ or $\mu = 2\lambda + \alpha_n^{\vee}$. 
\end{sublemma}

\begin{proof}[Proof of Sublemma \ref{sl4}]
Suppose first that $n \in N_1^{-}$ and $(N_1^{-} \cap K)n$ is fixed under conjugation by $J$. We claim that $(N_1^{-} \cap K)n = (N_1^{-} \cap K)u_{-\alpha_n}(x)$ for some $x \in \varpi^{-1}\cO$. The proof of Sublemma \ref{sl3} shows that $J$ normalizes $(N_1^{-} \cap K)$, so the commutator $[j, n]$ belongs to $(N_1^{-} \cap K)$ for each $j \in J$.  Sublemma \ref{sl3.5}(2) implies that $(N_1^{-} \cap K)n = (N_1^{-} \cap K)u_{-\alpha_n}(x)$ for some $x \in F$. Setting $j = (-\alpha_{n}^{\vee})(1 + \varpi) \in J$, we see that 
$$[j, u_{-\alpha_n}(x)] = u_{-\alpha_n}\left(x((1 + \varpi)^{2} - 1)\right)\in(N_1^{-} \cap K).$$
The difference $((1 + \varpi)^2 - 1)$ lies in $\varpi \cO \smallsetminus \varpi^{2}\cO$ since $\text{char}(\mathfrak{k}) \neq 2$, so $x \in \varpi^{-1}\cO$.

Now let $\mu \in X_{*}(T)_{M_{\nu}, -}$ and suppose that there exists a fixed point for the action of $J^*$ on $\tS''_{\mu, 2\lambda, \zeta}$.  Since the covering splits uniquely over each unipotent subgroup, the above paragraph shows that the invariant coset in $\tS''_{\mu,2\lambda,\zeta}$ is represented by $\widetilde{u}_{-\alpha_n}(x)$ for some $x \in \varpi^{-1}\cO$. By definition of $\tS''_{\mu,2\lambda,\zeta}$, we have
$$(N_1^{-} \cap K)u_{-\alpha_n}(x)\mu(\varpi) \subset (M_\nu\cap K) \lambda(\varpi)^2(M_\nu\cap K)$$ 
for some $x \in \varpi^{-1}\cO$. If $x \in \cO$, the Cartan decomposition implies that $\mu = 2\lambda$.  We may therefore assume $x \in \varpi^{-1}\cO \smallsetminus \cO$.  In this case, $u_{\alpha_n}(x^{-1})$ and $\mu(\varpi)^{-1}u_{\alpha_n}(x^{-1})\mu(\varpi) = u_{\alpha_n}(\varpi^{-\langle\alpha_n,\mu\rangle}x^{-1})$ both lie in $M_\nu\cap K$. 
%(since $\mu$ was assumed to lie in $X_*(T)_{M_\nu,-}$, not just $X_*(T)_{M_\nu\cap M_\s,-}$)
This gives
\begin{eqnarray*}
(N_1^- \cap K)u_{-\alpha_n}(x)\mu(\varpi) & = & (N_1^{-} \cap K)u_{\alpha_n}(x^{-1})w_{\alpha_n}\alpha_n^{\vee}(x)u_{\alpha_n}(x^{-1})\mu(\varpi)\\
 & \subset & (M_\nu\cap K)\alpha_n^{\vee}(\varpi)^{-1}\mu(\varpi)(M_\nu\cap K),
\end{eqnarray*}
where $w_{\alpha_n} := u_{\alpha_n}(1)u_{-\alpha_n}(-1)u_{\alpha_n}(1)$ (cf. \cite{springer:linalggrps}, Lemma 8.1.4).  Therefore $(M_\nu\cap K)\alpha_n^{\vee}(\varpi)^{-1}\mu(\varpi)(M_\nu\cap K) = (M_\nu\cap K)\lambda(\varpi)^2(M_\nu\cap K)$, which implies $\mu - \alpha_n^\vee$ is in the Weyl group orbit of $2\lambda$, and in particular is contained in $2X_*(T)$.  Given $\alpha_i\in \Pi_\nu$, we have $\langle\alpha_i,\mu - \alpha_n^\vee\rangle \leq - \langle\alpha_i, \alpha_n^\vee\rangle$, which is less than or equal to 0, except if $i = n - 1$.  In this case, however, we have $\langle\alpha_{n - 1},\mu - \alpha_n^\vee\rangle \leq 1$ and $\langle\alpha_{n - 1},\mu - \alpha_n^\vee\rangle\in 2\bbZ$, which shows $\langle\alpha_{n - 1},\mu - \alpha_n^\vee\rangle \leq 0$ and consequently $\mu - \alpha_n^\vee \in X_{*}(T)_{M_{\nu}, -}$.  The Cartan decomposition now gives $\mu - \alpha_n^\vee = 2\lambda$.  
\end{proof}

Proposition \ref{satakeprop}(5), Proposition 5.1 of \cite{herzig:modpgln}, and Proposition \ref{samecoeffs} show that in order to compute $\cS^{\tM_{\nu} \cap \tM_{\s}}_{\tM_{\nu}}(T^{\tM_{\nu}}_{2\lambda})$, it suffices to know its values on $X_*(T)_{M_\nu,-}$. We have noted that the value of $\cS^{\tM_{\nu} \cap \tM_{\s}}_{\tM_{\nu}}(T^{\tM_{\nu}}_{2\lambda})$ at $\widetilde{\mu}(\varpi)$ is equal to $|\tS''_{\mu, 2\lambda, 1}| - |\tS''_{\mu, 2\lambda, -1}|$, and the above sublemmas show that this difference is nonzero for a cocharacter $\mu \in X_*(T)_{M_{\nu}, -}$ only if $\mu = 2\lambda$ or $\mu = 2\lambda + \alpha_n^{\vee}$.  Hence \eqref{redsateqn} now implies 
$$\cS^{\tM_{\nu} \cap \tM_{\s}}_{\tM_{\nu}}(T^{\tM_{\nu}}_{2\lambda}) = \begin{cases}
T^{\tM_{\nu} \cap \tM_{\s}}_{2\lambda} & \textnormal{if}~1 \leq i \leq n-1,\\
T^{\tM_{\nu} \cap \tM_\s}_{2\lambda} + d\cdot T^{\tM_{\nu} \cap \tM_\s}_{2\lambda + \alpha_n^{\vee}} & \textnormal{if}~i = n,
\end{cases}$$
where $d := |\tS''_{2\lambda + \alpha_n^{\vee}, 2\lambda, 1}| - |\tS''_{2\lambda + \alpha_n^{\vee}, 2\lambda, -1}|$. (Note that $2\lambda + \alpha_n^{\vee} \notin X_*(T)_{M_{\nu}, -}$ when $\alpha_n \in \Pi_\nu$ and $i \neq n$, and $|\tS''_{2\lambda, 2\lambda, 1}| = 1$, $|\tS''_{2\lambda, 2\lambda, -1}| = 0$.)  Therefore, we obtain
$$\cS^{\tT}_{\tG}(T_{2\lambda}) = \begin{cases}
\cS_{\tM_{\nu}\cap \tM_\s}^{\tT}(T^{\tM_{\nu} \cap \tM_{\s}}_{2\lambda}) & \textnormal{if}~1 \leq i \leq n-1,\\
\cS_{\tM_{\nu} \cap \tM_{\s}}^{\tT}(T^{\tM_{\nu} \cap \tM_{\s}}_{2\lambda} + d\cdot T^{\tM_{\nu} \cap \tM_{\s}}_{2\lambda + \alpha_n^{\vee}}) & \textnormal{if}~i = n.
\end{cases}$$

Assume first that $1\leq i \leq n - 1$.  Using Proposition 6.7 of \cite{herzig:modpgln} and Proposition \ref{samecoeffs}, we obtain
$$\cS_{\tG}^{\tT}(T_{2\lambda}) = \cS_{\tM_{\nu} \cap \tM_{\s}}^{\tT}(T^{\tM_{\nu} \cap \tM_{\s}}_{2\lambda}) = \tau_{2\lambda} - \tau_{2\lambda + \alpha_i^\vee},$$
which completes the proof in the case $1 \leq i \leq n-1$.

Finally, assume $i = n$.  In this case, $\widetilde{\lambda}(\varpi)^2$ is central in $\tM_\nu\cap \tM_\s$, and Proposition \ref{satakeprop}(5) implies
$$\cS_{\tG}^{\tT}(T_{2\lambda}) = \tau_{2\lambda} + d \cdot \cS_{\tM_\nu\cap \tM_\s}^{\tT}(T^{\tM_{\nu} \cap \tM_{\s}}_{2\lambda + \alpha_n^{\vee}}).$$
Since $\cS_{\tM_{\nu} \cap \tM_{\s}}(T^{\tM_{\nu} \cap \tM_{\s}}_{2\lambda + \alpha_n^{\vee}}) = \tau_{2\lambda + \alpha_n^{\vee}} + \sum_{\mu \neq 2\lambda + \alpha_n^{\vee}} c_{2\lambda + \alpha_{n}^{\vee}}(\mu)\tau_{\mu}$, we can use Corollary \ref{cor:oddsum} to conclude that $d = 0$. Thus, we obtain
$$\cS_{\tG}^{\tT}(T_{2\lambda}) = \tau_{2\lambda}.$$
\end{proof}

\subsection{Alternate proof of Theorem \ref{thm:changeofwt} for short simple roots}
We wish to present a more conceptual proof of the change-of-weight theorem, suggested by Florian Herzig, in the case of a fixed short simple root $\alpha_i$, $1\leq i \leq n - 1$.  As above, it suffices to prove Proposition \ref{propn:st2l}.

As in the previous proof, we take
$$\lambda := -\sum_{j = 1}^i \lambda_j,$$
so that $\langle\alpha_j,-\lambda\rangle = \delta_{i,j}$.  By Proposition \ref{satakeprop}(5), we have
\begin{equation}\label{eqsatake}
\cS_{\tG}^{\tT}(T_{2\lambda}) =  \sum_{\sub{\mu\in X_*(T)_-}{\mu \geq 2\lambda}} c_{2\lambda}(\mu)\tau_{\mu},
\end{equation}
with $c_{2\lambda}(2\lambda) = 1$.  The condition $\mu\geq 2\lambda$ gives $\mu = 2\lambda + \sum_{k = 1}^na_k\alpha_k^\vee$ for $a_k\in \bbZ_{\geq 0}$, while the condition $\mu\in X_*(T)_-$ is equivalent to
$$\sum_{k = 1}^na_k\langle\alpha_j,\alpha_k^\vee\rangle\leq 2\langle\alpha_j,-\lambda\rangle = 2\delta_{i,j}$$
for each $1 \leq j \leq n$.

We define 
 $$\cA := \left\{\vec{a} = (a_1,\ldots, a_n)\in (\bbZ_{\geq 0})^n:\sum_{k = 1}^n a_k\langle\alpha_j,\alpha_k^\vee\rangle\leq 2\langle\alpha_j, -\lambda \rangle\right\},$$
 so that equation \eqref{eqsatake} takes the form
 \begin{equation}\label{eqsatake2}
 \cS_{\tG}^{\tT}(T_{2\lambda}) =  \sum_{\vec{a}\in \cA} c_{2\lambda}(2\lambda + \vec{a}\cdot\vec{\alpha})\tau_{2\lambda + \vec{a}\cdot\vec{\alpha}}
 \end{equation}
 where $\vec{\alpha} = (\alpha_1^\vee,\ldots,\alpha_n^\vee)$.  To proceed further, we require a useful combinatorial property of the set $\cA$.  Fix $\vec{a}\in \cA$, and define 
$$\cA_{\vec{a}}^{\neq i}:= \{\vec{b} = (b_1,\ldots, b_n)\in \cA: b_j = a_j~\textnormal{for all}~j\neq i\}.$$
Since $\vec{a} \in \cA_{\vec{a}}^{\neq i}$, we have $\cA_{\vec{a}}^{\neq i}\neq \emptyset$.

\begin{lemma}
Fix $\vec{a}\in \cA$, and let $\{\vec{\varepsilon}_j: 1\leq j \leq n\}$ denote the standard basis of $\bbZ^n$.  
\begin{enumerate}
\item If $a_j = 0$ for all $j\neq i$, then $\cA_{\vec{a}}^{\neq i} = \{\vec{0},~\vec{\varepsilon}_i\}$. 
\item If $a_j\neq 0$ for some $j\neq i$, then $\cA_{\vec{a}}^{\neq i} = \{\vec{a}\}$.  
\end{enumerate}
\end{lemma}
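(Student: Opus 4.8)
The plan is to recast $\cA$ as a set of antidominant cocharacters and then extract both parts from a single positivity statement. For $\vec a=(a_1,\dots,a_n)$ set $\mu_{\vec a}:=2\lambda+\sum_{k=1}^n a_k\alpha_k^\vee$. Since $\langle\alpha_j,-\lambda\rangle=\delta_{i,j}$ we have $\sum_k a_k\langle\alpha_j,\alpha_k^\vee\rangle=\langle\alpha_j,\mu_{\vec a}-2\lambda\rangle=\langle\alpha_j,\mu_{\vec a}\rangle+2\delta_{i,j}$, so $\vec a\in\cA$ precisely when $\vec a\in(\bbZ_{\geq 0})^n$ (equivalently $\mu_{\vec a}\geq 2\lambda$) and $\langle\alpha_j,\mu_{\vec a}\rangle\leq 0$ for all $j$ (equivalently $\mu_{\vec a}\in X_*(T)_-$). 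The assignment $\vec a\mapsto\mu_{\vec a}$ is injective and carries $\vec a+t\vec\varepsilon_i$ to $\mu_{\vec a}+t\alpha_i^\vee$, so determining $\cA_{\vec a}^{\neq i}$ amounts to finding all $t\in\bbZ$ for which $\mu_{\vec a}+t\alpha_i^\vee$ is both $\geq 2\lambda$ and antidominant. (One also records that $\vec 0,\vec\varepsilon_i\in\cA$.)

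The key input is the following \textbf{sublemma}: if $\mu\in X_*(T)_-$ satisfies $\mu\geq 2\lambda$ and $\langle\alpha_i,\mu\rangle\leq -2$, then $\mu=2\lambda$. To prove it, fix a $W$-invariant positive-definite symmetric form $(\cdot,\cdot)$ on $X_*(T)\otimes\bbR$, write $\mu-2\lambda=\sum_k a_k\alpha_k^\vee$ with all $a_k\geq 0$, and use the standard identity $(\alpha_k^\vee,y)=\tfrac12(\alpha_k^\vee,\alpha_k^\vee)\langle\alpha_k,y\rangle$ to obtain
$$(\mu-2\lambda,\ \mu-2\lambda)\ =\ \sum_{k=1}^n a_k\,\frac{(\alpha_k^\vee,\alpha_k^\vee)}{2}\,\langle\alpha_k,\mu-2\lambda\rangle .$$
Since $\langle\alpha_k,\mu-2\lambda\rangle=\langle\alpha_k,\mu\rangle+2\delta_{i,k}$, antidominance of $\mu$ makes this $\leq 0$ for $k\neq i$, and the hypothesis $\langle\alpha_i,\mu\rangle\leq -2$ makes it $\leq 0$ for $k=i$; hence $(\mu-2\lambda,\mu-2\lambda)\leq 0$, which by positive-definiteness forces $\mu=2\lambda$.

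With the sublemma available, part (1) is immediate: if $a_j=0$ for all $j\neq i$ then membership $b_i\vec\varepsilon_i\in\cA$ reduces to the single inequality $2b_i\leq 2$ (the conditions at $\alpha_j$ for $j\neq i$ read $b_i\langle\alpha_j,\alpha_i^\vee\rangle\leq 0$, automatic because off-diagonal Cartan entries are $\leq 0$), so $\cA_{\vec a}^{\neq i}=\{\vec 0,\vec\varepsilon_i\}$. For part (2), assume $a_j\neq 0$ for some $j\neq i$ and take $\vec b=\vec a+t\vec\varepsilon_i\in\cA$; I claim $t=0$. If $t\geq 1$: since $\vec a\neq\vec 0$ we have $\mu_{\vec a}\neq 2\lambda$, so the contrapositive of the sublemma gives $\langle\alpha_i,\mu_{\vec a}\rangle\geq -1$, and then antidominance of $\mu_{\vec b}=\mu_{\vec a}+t\alpha_i^\vee$ at $\alpha_i$ forces $2t\leq-\langle\alpha_i,\mu_{\vec a}\rangle\leq 1$, which is impossible. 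If $t\leq -1$: then $\langle\alpha_i,\mu_{\vec b}\rangle=\langle\alpha_i,\mu_{\vec a}\rangle+2t\leq 2t\leq -2$, so applying the sublemma to $\mu_{\vec b}$ (which lies in $X_*(T)_-$ and is $\geq 2\lambda$) gives $\mu_{\vec b}=2\lambda=\mu_{\vec 0}$, whence $\vec b=\vec 0$ and $\vec a=-t\,\vec\varepsilon_i$ — contradicting the assumption that $a_j\neq 0$ for some $j\neq i$. Therefore $t=0$ and $\cA_{\vec a}^{\neq i}=\{\vec a\}$.

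The main obstacle is really just identifying the sublemma: once one sees that the pairing of $\mu-2\lambda$ against each $\alpha_k^\vee$ is a nonpositive multiple of $\langle\alpha_k,\mu-2\lambda\rangle$, positive-definiteness does all the work and parts (1)--(2) become mechanical. The only remaining care is in the dictionary between the defining inequalities of $\cA$, the partial order $\leq$ on $X_*(T)$, and antidominance; the non-symmetry of the type $C_n$ Cartan matrix (i.e.\ $\langle\alpha_n,\alpha_{n-1}^\vee\rangle=-2$) enters only through the harmless normalization factor $\tfrac12(\alpha_k^\vee,\alpha_k^\vee)$ and causes no difficulty.
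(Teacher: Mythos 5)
Your proof is correct, and it takes a genuinely different route from the paper's. The paper argues via the Cartan matrix $\cC$ of $\mathbf{Sp}_{2n}$: it takes the element $\vec{b}$ of $\cA_{\vec{a}}^{\neq i}$ with minimal $i$-th coordinate, sets $\vec{c}=\cC\vec{b}$, and splits on the sign of $c_i$. When $c_i\leq 0$ all entries of $\vec{c}$ are nonpositive, and the key input is the table fact that every entry of $\cC^{-1}$ is nonnegative (cited to Humphreys and Lusztig--Tits), forcing $\vec{b}=\vec{0}$ and hence $\cA_{\vec{a}}^{\neq i}=\{\vec{0},\vec{\varepsilon}_i\}$; when $1\leq c_i\leq 2$, the $i$-th inequality $c_i+2m\leq 2$ pins $m=0$ and gives $\cA_{\vec{a}}^{\neq i}=\{\vec{a}\}$. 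You instead translate membership in $\cA$ back into ``$\mu_{\vec{a}}\geq 2\lambda$ and $\mu_{\vec{a}}$ antidominant'' (which is literally how the paper arrived at $\cA$, so this is a re-derivation of the setup) and replace the inverse-Cartan input by a self-contained sublemma proved with a $W$-invariant positive-definite form and the standard identity $(\alpha_k^\vee,y)=\tfrac{1}{2}(\alpha_k^\vee,\alpha_k^\vee)\langle\alpha_k,y\rangle$: an antidominant $\mu\geq 2\lambda$ with $\langle\alpha_i,\mu\rangle\leq -2$ must equal $2\lambda$. The sign analysis on the shift $t$ in the $i$-th coordinate then yields both parts, and all the steps (the contrapositive use of the sublemma when $t\geq 1$, the direct use when $t\leq -1$, injectivity of $\vec{a}\mapsto\mu_{\vec{a}}$ from linear independence of the simple coroots) check out. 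What each approach buys: yours avoids any appeal to tables of inverse Cartan matrices and works verbatim for an arbitrary root system, at the cost of introducing the invariant form; the paper's is a slightly shorter piece of bookkeeping once the positivity of $\cC^{-1}$ is granted, and its two cases produce the two possible answers directly from the minimal element.
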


\begin{proof}
Fix $\vec{a}\in \cA$, and let $\vec{b} = (b_1,\ldots,b_n)\in \cA$ denote the ``minimal'' element of $\cA_{\vec{a}}^{\neq i}$; that is, we have $b_j = a_j$ for all $j\neq i$, and $b_i$ is as small as possible.  Then all elements of $\cA_{\vec{a}}^{\neq i}$ are of the form $\vec{b} + m\vec{\varepsilon}_i$ with $m\geq 0$.

Let $\cC = (\langle\alpha_j,\alpha_k^\vee\rangle)_{j,k}$ denote the Cartan matrix of $\mathbf{Sp}_{2n}$, and define $\vec{c} := \cC \vec{b}$.  As $\vec{b}\in \cA$, we have $c_j\leq 2\delta_{i,j}$ for all $j$.  Assume first that $c_i \leq 0$, so that all entries of $\vec{c}$ are nonpositive.  Since all entries of $\cC^{-1}$ are nonnegative (cf. \cite{humphreys:liealg}, $\S$13, Table 1, or \cite{lusztigtits:cartan}), we see that all entries of $\vec{b} = \cC^{-1}\vec{c}$ are nonpositive.  On the other hand, $\vec{b}\in \cA$, which implies $\vec{b} = \vec{0}$.  We have $m\langle \alpha_i, \alpha_i^{\vee}\rangle \leq 2$ if and only if $m \leq 1$, so this gives $\cA_{\vec{a}}^{\neq i} = \{\vec{0}, \vec{\varepsilon}_i\}$.  

We may therefore assume $1\leq c_i\leq 2$.  The $i^{\textnormal{th}}$ entry of $\cC(\vec{b} + m\vec{\varepsilon}_i)$ is $c_i + \langle\alpha_i,\alpha_i^\vee\rangle m = c_i + 2m$.  Assuming $\vec{b} + m\vec{\varepsilon}_i\in \cA$, we have $c_i + 2m\leq 2$, which implies $m = 0$.  Therefore, $\cA_{\vec{a}}^{\neq i} = \{\vec{b}\} = \{\vec{a}\}$.
\end{proof}

We now analyze Hecke eigenvalues of principal series representations.  Let $P_i = M_i\ltimes N_i$ denote the standard parabolic subgroup corresponding to $\{\alpha_i\}$ with standard Levi subgroup $M_i$, so that $M_i\cong (F^\times)^{i - 1}\times\textnormal{GL}_2(F)\times (F^\times)^{n - i - 1}.$  Let $\xi:T\longrightarrow \fpb^\times$ denote a smooth character of $T$.  We assume throughout that $\xi\circ\alpha_i^\vee:F^\times \longrightarrow \fpb^\times$ is the trivial character, so that $\xi$ extends to a one-dimensional representation of $M_i$.  We continue to denote this representation of $M_i$ by $\xi$.  By Theorem 30 of \cite{barthellivne:irredmodp}, we have a short exact sequence of $M_i$-representations
\begin{equation}\label{levises}
0\longrightarrow \xi\longrightarrow \Ind_{B^-\cap M_i}^{M_i}(\xi)\longrightarrow \xi\otimes\textnormal{St}_{\textnormal{GL}_2}\longrightarrow 0,
\end{equation}
where $\textnormal{St}_{\textnormal{GL}_2}$ denotes the Steinberg representation of the $\textnormal{GL}_2$ factor.

We inflate the short exact sequence \eqref{levises} to $\tM_i$ to obtain
$$0\longrightarrow \xi\longrightarrow \Ind_{\tB^-\cap \tM_i}^{\tM_i}(\xi)\longrightarrow\xi\otimes\textnormal{St}_{\textnormal{GL}_2}\longrightarrow 0.$$
Here, the character $\xi$ is viewed as a character of $\tM_i$ via $\pr$, and the $\tM_i$-equivariant isomorphism $\Ind_{B^-\cap M_i}^{M_i}(\xi)\cong \Ind_{\tB^-\cap \tM_i}^{\tM_i}(\xi)$ is defined by sending $f$ to $f\circ\pr$.  We further twist the above sequence by $\chps|_{\tM_i}$ to obtain an exact sequence of genuine $\tM_i$-representations:
$$0\longrightarrow \xi\otimes\chps\longrightarrow \Ind_{\tB^-\cap \tM_i}^{\tM_i}(\xi)\otimes\chps\cong \Ind_{\tB^-\cap\tM_i}^{\tM_i}(\xi\otimes\chps)\longrightarrow\xi\otimes\textnormal{St}_{\textnormal{GL}_2}\otimes\chps\longrightarrow 0.$$
Finally, we parabolically induce this sequence to $\tG$ to get
\begin{equation}\label{redps}
0\longrightarrow \Ind_{\tP_i^-}^{\tG}(\xi\otimes\chps)\longrightarrow \Ind_{\tB^-}^{\tG}(\xi\otimes\chps)\longrightarrow\Ind_{\tP_i^-}^{\tG}(\xi\otimes\textnormal{St}_{\textnormal{GL}_2}\otimes\chps)\longrightarrow 0.
\end{equation}

Consider again the weight $\tV$ as in \S \ref{subsec:chwt1}, and assume we have a $\tK$-equivariant injection $f:\tV\longhookrightarrow \Ind_{\tP_i^-}^{\tG}(\xi\otimes\chps)|_{\tK}$ (this happens if $\tV_{(U^-\cap K)^*} \cong \xi\otimes\chps|_{\tT\cap \tK}$ as $(\tT\cap\tK)$-representations).  Composing this with the injection in the short exact sequence \eqref{redps}, we obtain
$$0\neq \Hom_{\tK}\left(\tV,\Ind_{\tB^-}^{\tG}(\xi\otimes\chps)|_{\tK}\right) \cong \Hom_{\tT\cap \tK}\left(\tV_{(U^-\cap K)^*},\xi\otimes\chps|_{\tT\cap \tK}\right).$$
We let $f_{\tT}\in\Hom_{\tT\cap \tK}(\tV_{(U^-\cap K)^*},\xi\otimes\chps|_{\tT\cap \tK})$ denote the element corresponding to $f$, characterized by $f_{\tT}(\overline{v}) = f(v)(1)$ for $v\in \tV$.  Additionally, we let 
$$\jmath:\Hom_{\tT\cap \tK}\left(\tV_{(U^-\cap K)^*},\xi\otimes\chps|_{\tT\cap \tK}\right) \stackrel{\sim}{\longrightarrow} \Hom_{\tK}\left(\tV,\Ind_{\tB^-}^{\tG}(\xi\otimes\chps)|_{\tK}\right)$$ 
denote the inverse of the above map; then $\jmath$ sends $f_{\tT}$ to $f$.  The Hecke algebra $\cH_{\tG}(\tV)$ naturally acts on the multiplicity space $\Hom_{\tK}(\tV,\Ind_{\tB^-}^{\tG}(\xi\otimes\chps)|_{\tK})$ on the right, and likewise $\cH_{\tT}(\tV_{(U^-\cap K)^*})$ acts on $\Hom_{\tT\cap \tK}(\tV_{(U^-\cap K)^*},\xi\otimes\chps|_{\tT\cap \tK})$, so the functorial properties of the Satake transform (\cite{henniartvigneras:cptparabolic}, diagram (4)) give
$$f*T_{2\lambda} = \jmath\left(f_{\tT}*\cS_{\tG}^{\tT}(T_{2\lambda})\right),$$
where $(f*T_{\lambda'})(v) = \sum_{g\in \tK\backslash\tG}g^{-1}\cdot f(T_{\lambda'}(g)(v))$ (and we have an analogous expression for $f_{\tT}*\tau_{\mu'}$; see \cite{herzig:modpgln}, $\S$2.1.2).  
%Relevant comment:  Note also that in order to compute the action of $T_{2\lambda}$ on $f$ (which we think of as landing in $\textnormal{ind}_{\tP_i^-}^{\tG}(\xi\otimes\chps)$), it suffices to extend $f$ to an injection landing in $\textnormal{ind}_{\tB^-}^{\tG}(\xi\otimes\chps)$ as above.  

Fix $v\in \tV$.  By the explicit action of $\cH_{\tT}(\tV_{(U^-\cap K)^*})$ on $f_{\tT}$, we get
\begin{eqnarray*}
(f_{\tT}*\tau_\mu)(\overline{v}) & = & \sum_{t\in (\tT\cap \tK)\backslash\tT}t^{-1}\cdot f_{\tT}(\tau_{\mu}(t)(\overline{v}))\\
 & = & \widetilde{\mu}(\varpi)^{-1}\cdot f_{\tT}(\overline{v})\\
 & = & (\xi\otimes\chps)(\widetilde{\mu}(\varpi))^{-1} f_{\tT}(\overline{v}).
\end{eqnarray*}
Therefore, using equation \eqref{eqsatake2} gives
\begin{eqnarray}
f*T_{2\lambda} & = & \jmath\left(f_{\tT}*\cS_{\tG}^{\tT}(T_{2\lambda})\right)\notag\\
 & = & \jmath\left(f_{\tT}*\left(\sum_{\vec{a}\in \cA} c_{2\lambda}(2\lambda + \vec{a}\cdot\vec{\alpha})\tau_{2\lambda + \vec{a}\cdot\vec{\alpha}}\right)\right)\notag\\
 & = & \jmath\left(\left(\sum_{\vec{a}\in\cA}c_{2\lambda}(2\lambda + \vec{a}\cdot\vec{\alpha})(\xi\otimes\chps)(\widetilde{(2\lambda + \vec{a}\cdot\vec{\alpha})}(\varpi))^{-1}\right)f_{\tT}\right)\notag\\
 & = & (\xi\otimes\chps)(\widetilde{\lambda}(\varpi))^{-2}\left(\sum_{\vec{a}\in\cA}c_{2\lambda}(2\lambda + \vec{a}\cdot\vec{\alpha})\left(\prod_{j = 1}^n(\xi\otimes\chps)(\widetilde{\alpha}_j^\vee(\varpi))^{-a_j}\right)\right)f\notag\\
 & = & (\xi\otimes\chps)(\widetilde{\lambda}(\varpi))^{-2}\left(\sum_{\textnormal{distinct}~\cA_{\vec{a}}^{\neq i}}\left(\sum_{\vec{b}\in \cA_{\vec{a}}^{\neq i}}c_{2\lambda}(2\lambda + \vec{b}\cdot\vec{\alpha})\right)\prod_{\sub{j = 1}{j\neq i}}^n(\xi\otimes\chps)(\widetilde{\alpha}_j^\vee(\varpi))^{-a_j}\right)f\label{const}
\end{eqnarray}
The fifth equality follows from the fact that $\xi\otimes\chps$ is trivial on $\widetilde{\alpha}_i^{\vee}(x)$.  Moreover, for distinct sets $\cA_{\vec{a}}^{\neq i}$, $\cA_{\vec{a}'}^{\neq i}$, the $(n - 1)$-tuples $(a_1,\ldots,a_{i - 1},a_{i + 1}, \ldots, a_n), (a_1',\ldots,a_{i - 1}',a_{i + 1}', \ldots, a_n')$ are distinct (this follows directly from the definition).

Now, the map $f:\tV\longhookrightarrow \Ind_{\tP_i^-}^{\tG}(\xi\otimes\chps)|_{\tK}$ induces a map $\boldsymbol{f}:\ind_{\tK}^{\tG}(\tV)\longrightarrow \Ind_{\tP_i^-}^{\tG}(\xi\otimes\chps)$ by Frobenius reciprocity.  This descends to a nonzero map (which we still denote $\boldsymbol{f}$)
$$\boldsymbol{f}:\chi\otimes_{\cH_{\tG}(\tV)}\ind_{\tK}^{\tG}(\tV) \longrightarrow \Ind_{\tP_i^-}^{\tG}(\xi\otimes\chps),$$
where $\chi$ denotes the character obtained by composing the Satake transform $\cS_{\tG}^{\tT}:\cH_{\tG}(\tV)\longrightarrow\cH_{\tT}(\tV_{(U^-\cap K)^*})$ with the character $\tau_\mu\longmapsto (\xi\otimes\chps)(\widetilde{\mu}(\varpi))^{-1}$.  In addition, the maps $\varphi^-,\varphi^+$ induce maps $\boldsymbol{\varphi}^-, \boldsymbol{\varphi}^+$ as follows:

\centerline{
\xymatrix{ \chi\otimes_{\cH_{\tG}(\tV)}\ind_{\tK}^{\tG}(\tV) \ar@<1ex>[r]^{\boldsymbol{\varphi}^+} & \chi\otimes_{\cH_{\tG}(\tV')}\ind_{\tK}^{\tG}(\tV') \ar@<1ex>[l]^{\boldsymbol{\varphi}^-}
}
}
\noindent whose composite is the constant \eqref{const}. (Here we identify $\cH_{\tT}(\tV_{(U^-\cap K)^*})$ and $\cH_{\tT}(\tV'_{(U^-\cap K)^*})$.)

We claim that the constant \eqref{const} is equal to 0.  Suppose not.  Then $\boldsymbol{\varphi}^+$ and $\boldsymbol{\varphi}^-$ are isomorphisms, and we obtain a nonzero map 
$$\boldsymbol{f}\circ\boldsymbol{\varphi}^-:\chi \otimes_{\cH_{\tG}(\tV')}\ind_{\tK}^{\tG}(\tV')\longrightarrow \Ind_{\tP_i^-}^{\tG}(\xi\otimes\chps),$$
which gives, by Frobenius reciprocity, an injection $\tV'\longhookrightarrow \Ind_{\tP_i^-}^{\tG}(\xi\otimes\chps)|_{\tK}$.  However, such an injection implies that $\Hom_{\tM_i\cap \tK}(\tV'_{(N_i^-\cap K)^*},\xi\otimes\chps|_{\tM_i\cap \tK})\neq 0$, which shows that $\tV'_{(N_i^-\cap K)^*}$ is one-dimensional.  This contradicts the construction of $\tV'$.

Therefore, we have that the constant \eqref{const} equals 0, for all $\xi$ such that $\xi\circ\alpha_i^\vee$ is the trivial character and $\tV_{(U^-\cap K)^*}\cong \xi\otimes\chps|_{\tT\cap\tK}$ as $(\tT\cap\tK)$-representations.  Viewing the constant \eqref{const} as a polynomial in the variables $(\xi\otimes\chps)(\widetilde{\alpha}_j^\vee(\varpi))$ for $j\neq i$, we see that this polynomial vanishes at every point of $(\fpb^\times)^{n - 1}$, and therefore must be identically zero by the Nullstellensatz.  Using the description of the sets $\cA_{\vec{a}}^{\neq i}$, this gives
\begin{eqnarray*}
0 & = & \sum_{\vec{b}\in \cA_{\vec{a}}^{\neq i}}c_{2\lambda}(2\lambda + \vec{b}\cdot\vec{\alpha})\\
 & = & \begin{cases}c_{2\lambda}(2\lambda) + c_{2\lambda}(2\lambda + \alpha_i^\vee) = 1 + c_{2\lambda}(2\lambda + \alpha_i^\vee)& \textnormal{if}~a_j = 0~\textnormal{for all}~j\neq i,\\ c_{2\lambda}(2\lambda + \vec{a}\cdot\vec{\alpha})& \textnormal{if}~a_j \neq 0~\textnormal{for some}~j\neq i.\end{cases}
\end{eqnarray*}
We conclude 
$$\cS_{\tG}^{\tT}(T_{2\lambda}) = \tau_{2\lambda} - \tau_{2\lambda + \alpha_i^\vee}.$$

\section{Classification}\label{classnchapter}

We now begin our classification of irreducible admissible genuine representations of $\tG$.  We follow the reductive case closely (cf.  \cite{abehenniartherzigvigneras:irredmodp}, \cite{abe:irredmodp}, \cite{herzig:modpgln}).

\subsection{Setup}

\begin{defn}
Given a simple root $\alpha\in \Pi$, we define $M'_\alpha\subset\tG$ to be the subgroup generated by $U_\alpha^*$ and $U_{-\alpha}^*$. 
\end{defn}

\begin{lemma}\label{malpha}
We have
$$M'_\alpha\cong\begin{cases}\textnormal{SL}_2(F) & \textnormal{if}~\alpha\neq\alpha_n,\\ \widetilde{\textnormal{SL}}_2(F) & \textnormal{if}~\alpha = \alpha_n. \end{cases}$$
\end{lemma}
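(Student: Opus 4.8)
The plan is to reduce Lemma~\ref{malpha} to understanding how the cover $\pr\colon\tG\to G$ restricts to the rank-one subgroup $H_\alpha:=\langle U_\alpha,U_{-\alpha}\rangle$ of $G$. First I would note that, concretely, $H_{\alpha_n}$ is the copy of $\mathbf{Sp}_2=\mathbf{SL}_2$ acting on the non-degenerate rank-one orthogonal summand of the symplectic space spanned by the last pair of basis vectors and fixing its complement, while for $1\le i\le n-1$ the subgroup $H_{\alpha_i}$ is the $(i,i+1)$-block $\textnormal{SL}_2\subset\textnormal{GL}_n(F)\cong M_\s$, which in particular lies inside $M_\s^{\textnormal{der}}\cong\textnormal{SL}_n(F)$; in either case $H_\alpha\cong\textnormal{SL}_2(F)$. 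Since $\pr$ carries each $U_{\pm\alpha}^*$ isomorphically onto $U_{\pm\alpha}$, the group $M'_\alpha$ surjects onto $H_\alpha$ under $\pr$, with kernel $M'_\alpha\cap\mu_2$, which is either trivial or all of $\mu_2$. This gives a dichotomy: either $(\pr|_{M'_\alpha})^{-1}$ is a splitting of the cover over $H_\alpha$ and $M'_\alpha\cong H_\alpha\cong\textnormal{SL}_2(F)$, or else $\mu_2\subseteq M'_\alpha$ and $M'_\alpha=\pr^{-1}(H_\alpha)$, the full preimage.

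For the short roots $\alpha_i$ ($1\le i\le n-1$) I would invoke the fact, established in the proof of Proposition~\ref{samecoeffs}, that the cover splits (uniquely) over $M_\s^{\textnormal{der}}$; call this splitting $s\colon M_\s^{\textnormal{der}}\to\tG$. Restricting $s$ to the unipotent subgroups $U_{\pm\alpha_i}\subset M_\s^{\textnormal{der}}$ produces splittings of the cover over $U_{\pm\alpha_i}$, which by the uniqueness of such splittings (cf.\ $\S$\ref{splittings}) must agree with $\widetilde u_{\pm\alpha_i}$. Hence $U_{\pm\alpha_i}^*=s(U_{\pm\alpha_i})$, so $M'_{\alpha_i}=\langle s(U_{\alpha_i}),s(U_{-\alpha_i})\rangle=s(H_{\alpha_i})\cong H_{\alpha_i}\cong\textnormal{SL}_2(F)$ because $s$ is an injective homomorphism. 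This is also consistent with the Brylinski--Deligne data of $\S$\ref{cover}: the relevant rank-one invariant $Q(\alpha_i^\vee)=2$ is even, which forces the restricted cover to split.

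For the long root $\alpha_n$ the essential input is that the restriction of the metaplectic double cover $\widetilde{\textnormal{Sp}}_{2n}(F)$ to the subgroup $H_{\alpha_n}\cong\mathbf{Sp}_2(F)$ attached to an orthogonal decomposition of the symplectic space is the metaplectic cover $\widetilde{\textnormal{SL}}_2(F)=\widetilde{\mathbf{Sp}}_2(F)$, which is a \emph{non-split} central extension by $\mu_2$; this is classical for Rao's cocycle (see \cite{rao:weilrepn}, and cf.\ Chapter~2 of \cite{mvw:howebook}), and is reflected in the Brylinski--Deligne picture by the fact that $Q(\alpha_n^\vee)=1$ is odd. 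Granting this, $\pr^{-1}(H_{\alpha_n})\cong\widetilde{\textnormal{SL}}_2(F)$ does not split over $H_{\alpha_n}$, so the first alternative in the dichotomy above is impossible; therefore $M'_{\alpha_n}\cap\mu_2=\mu_2$ and $M'_{\alpha_n}=\pr^{-1}(H_{\alpha_n})\cong\widetilde{\textnormal{SL}}_2(F)$, completing the proof.

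I expect the main obstacle to be precisely this last point: establishing the non-triviality of the cover restricted to the rank-one symplectic subgroup $H_{\alpha_n}$. One option is to cite Rao's explicit cocycle, whose restriction to the $\mathbf{Sp}_2$ attached to a non-degenerate line is cohomologous to Kubota's cocycle; another is to appeal to the Brylinski--Deligne/Gan--Gao description already set up in $\S$\ref{cover}. A fully self-contained argument would instead compute Rao's cocycle on the product of root-group elements defining $\widetilde{\alpha}_n^\vee(x)\widetilde{\alpha}_n^\vee(y)$ and verify the metaplectic $\textnormal{SL}_2$-relation $\widetilde{\alpha}_n^\vee(x)\,\widetilde{\alpha}_n^\vee(y)=\widetilde{\alpha}_n^\vee(xy)\cdot(x,y)_F$, which exhibits $-1\in M'_{\alpha_n}$ as soon as $(x,y)_F=-1$ and hence forces the second alternative of the dichotomy.
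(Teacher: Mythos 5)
Your argument is correct, and it reaches the conclusion by a partly different route than the paper. The paper's proof is uniform in $\alpha$: it pulls the cover back to $\textnormal{SL}_2(F)$ along $\varphi_\alpha$, observes that the resulting extension is the Brylinski--Deligne cover attached to $Q|_{\bbZ\alpha^\vee}$, and reads off splitness from the parity of $Q(\alpha^\vee)$ ($Q(\alpha_i^\vee)=2$ for $i<n$, $Q(\alpha_n^\vee)=1$), mentioning Steinberg/Moore cocycles only as an alternative. You instead treat the two cases by different means: for short roots you reuse the unique splitting over $M_\s^{\textnormal{der}}$ already established (via Moore's theory) in the proof of Proposition \ref{samecoeffs}, combined with uniqueness of splittings over unipotent subgroups to see that $U_{\pm\alpha_i}^*$ lies in the image of that splitting; for the long root you invoke the classical fact that Rao's cocycle restricts nontrivially (Kubota's cocycle) to the rank-one $\mathbf{Sp}_2$, or equivalently the relation $\widetilde{\alpha}_n^\vee(x)\widetilde{\alpha}_n^\vee(y)=\widetilde{\alpha}_n^\vee(xy)(x,y)_F$, which exhibits $-1\in M'_{\alpha_n}$ directly. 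There is no circularity in citing Proposition \ref{samecoeffs}, since it precedes the lemma and is independent of it. A genuine merit of your write-up is that you make explicit the dichotomy step relating $M'_\alpha$ (generated by the canonical unipotent splittings) to splitness of the cover over $H_\alpha=\langle U_\alpha,U_{-\alpha}\rangle$, which the paper's proof leaves implicit; the trade-off is that your long-root case rests on an external classical citation (or an uncarried-out cocycle computation), whereas the paper's $Q$-parity argument settles both cases at once within the framework it has already set up. Your final self-contained suggestion via the Kubota relation is the cleanest way to close that gap if you want to avoid citing Rao's cocycle computations.
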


\begin{proof}
Let $\alpha\in\Pi$, and let $\varphi_\alpha:\textnormal{SL}_2(F)\longrightarrow G$ denote the morphism associated to $\alpha$.  Pulling back the cover $\tG\longrightarrow G$ to $\textnormal{SL}_2(F)$ via $\varphi_\alpha$ gives the diagram:

\centerline{
\xymatrix{
1\ar[r] & \mu_2\ar[r]&  \tG \ar^{\pr}[r]\ar@{}[dr]|{\square} & G\ar[r] & 1\\
1\ar[r] & \mu_2\ar[r]\ar@{=}[u] & \tH \ar[r]\ar[u] & \textnormal{SL}_2(F)\ar[r]\ar[u]_{\varphi_\alpha} & 1
}
} 
\noindent The cover $\tH\longrightarrow \textnormal{SL}_2(F)$ may be realized by an extension of algebraic groups (as in $\S$\ref{cover}), and is therefore determined by the restriction of the quadratic form $Q$ to $\bbZ\alpha^\vee$.  Since $Q(\alpha_i^\vee) = 2$ for $1\leq i \leq n - 1$ and $Q(\alpha_n^\vee) = 1$, we see that the extension 
$$1\longrightarrow \mu_2\longrightarrow \tH \longrightarrow \textnormal{SL}_2(F) \longrightarrow 1$$
defined by $\varphi_\alpha$ splits if and only if $\alpha \neq \alpha_n$.  This gives the claim. %This uses Proposition 4.15 of Brylinski--Deligne, and the fact that the Baer sum gives the group structure on $\textnormal{H}^2(G,\mu_2)$.

Alternatively, one may obtain the result by computing the extension class of the pullback using Steinberg cocycles (cf. \cite{moore:gpextensions}).  
\end{proof}

The following lemma will be useful in the definition of a supersingular triple at the end of this section.

\begin{lemma}\label{extlemma}
Let $\tM$ be a Levi subgroup of $\tG$, and let $\Pi_M = \Pi_1\sqcup\Pi_2$ be a partition of $\Pi_M$ such that $\langle\Pi_1,\Pi_2^\vee\rangle = 0$.  Let $\tM_i$ denote the Levi subgroup corresponding to $\Pi_i$, and let $L'_2$ denote the subgroup of $\tT\subset\tM_1$ generated by $\tT\cap M'_\beta$ for $\beta\in \Pi_2$.  We then have
$$\tM/[\tM_2,\tM_2]\cong \tM_1/L'_2.$$
\end{lemma}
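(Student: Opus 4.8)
Write $N := \langle M'_\beta : \beta \in \Pi_2\rangle \subseteq \tM_2$. The plan is to produce the isomorphism as the one induced by the composite $\tM_1 \hookrightarrow \tM \longtwoheadrightarrow \tM/[\tM_2,\tM_2]$, so the work consists of three things: identifying $[\tM_2,\tM_2]$ with $N$ (and checking $N \lhd \tM$), showing the composite is surjective, and computing its kernel. First I would identify $[\tM_2,\tM_2] = N$. Each $M'_\beta$ is perfect (by Lemma \ref{malpha} it is $\textnormal{SL}_2(F)$ or $\widetilde{\textnormal{SL}}_2(F)$, and the metaplectic cover is a non-split central extension of the perfect group $\textnormal{SL}_2(F)$, hence perfect) and lies in $\tM_2$, so $N \subseteq [\tM_2,\tM_2]$. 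Conversely, applying $\pr$ and using $\mu_2 \subseteq \tT$ shows $\tM_2 = \langle \tT, M'_\beta : \beta \in \Pi_2\rangle$; since $\tT$ normalizes each $M'_\beta$ (uniqueness of the splittings over $U_{\pm\beta}$) and each $M'_\beta \subseteq N$, the subgroup $N$ is normal in $\tM_2$, and $\tM_2/N$ is generated by the image of the abelian group $\tT$, hence abelian; therefore $[\tM_2,\tM_2] \subseteq N$ and moreover $\tM_2 = \tT N$.

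Next I would establish normality of $N$ in $\tM$ and surjectivity. Since $\Pi_M = \Pi_1 \sqcup \Pi_2$ with $\Pi_1 \perp \Pi_2$, the root subsystem $\Phi_M$ splits as the orthogonal direct sum $\Phi_{M_1} \sqcup \Phi_{M_2}$; in particular no positive-integer combination $j\alpha + k\beta$ with $\alpha \in \Phi_{M_1}$, $\beta \in \Phi_{M_2}$ lies in $\Phi$, so by the commutator formula \eqref{ucomm} the groups $M_1^{\textnormal{der}}$ and $M_2^{\textnormal{der}}$ commute elementwise; by the fact at the end of $\S$\ref{cover} (commuting in $G$ lifts to commuting in $\tG$), the subgroups $M'_\gamma$ for $\gamma \in \Pi_1$ centralize $N$. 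Together with the fact that $\tT$ normalizes $N$ this shows $\tM_1$ normalizes $N$, and since $\tM = \langle \tM_1, \tM_2\rangle$ we get $N \lhd \tM$. For surjectivity: $M_1^{\textnormal{der}}M_2^{\textnormal{der}}$ is a subgroup normalized by $T$, so $M = M_1 M_2$ as sets, hence $\tM = \tM_1\tM_2 = \tM_1 \tT N = \tM_1 N$ (using $\tT \subseteq \tM_1$), and the composite $\tM_1 \to \tM/N$ is onto.

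It remains to compute the kernel, which is $\tM_1 \cap N$. Since $\Pi_1 \cap \Pi_2 = \emptyset$ one has $M_1 \cap M_2 = T$, so $\tM_1 \cap \tM_2 = \tT$ and therefore $\tM_1 \cap N = \tT \cap N$. The inclusion $L_2' \subseteq \tT \cap N$ is immediate. For the reverse, note $\pr(\tT \cap N) \subseteq T \cap M_2^{\textnormal{der}}$, which equals the torus $T_2 := \prod_{\beta \in \Pi_2}\beta^\vee(F^\times)$ — a direct product, because $\{\beta^\vee : \beta \in \Pi_2\}$ is part of the $\bbZ$-basis $\{\alpha_i^\vee\}$ of $X_*(T)$ and $M_2^{\textnormal{der}}$ is simply connected — while $\pr(L_2') \supseteq \langle \beta^\vee(F^\times) : \beta \in \Pi_2\rangle = T_2$ since each $\widetilde\beta^\vee(x) \in \tT \cap M'_\beta$ projects to $\beta^\vee(x)$. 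Hence $\pr(L_2') = \pr(\tT \cap N)$, and it suffices to show $\mu_2 \cap N \subseteq L_2'$. If $\alpha_n \in \Pi_2$, then $\pr$ restricts to a surjection $M'_{\alpha_n} \to \varphi_{\alpha_n}(\textnormal{SL}_2(F))$ which cannot be injective (otherwise the cover would split over $\varphi_{\alpha_n}(\textnormal{SL}_2(F))$, contradicting Lemma \ref{malpha}), so $\mu_2 \subseteq M'_{\alpha_n}$ and hence $\mu_2 \subseteq \tT \cap M'_{\alpha_n} \subseteq L_2'$. If $\alpha_n \notin \Pi_2$, then $M_2^{\textnormal{der}} \subseteq M_\s^{\textnormal{der}} \cong \textnormal{SL}_n(F)$, over which the cover splits uniquely (as in the proof of Proposition \ref{samecoeffs}); the unique splitting restricts to the canonical splitting on each $U_{\pm\beta}$, so its image contains every $M'_\beta$ and hence $N$, forcing $N \cap \mu_2 = 1$. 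In both cases $\mu_2 \cap N \subseteq L_2'$, so $\tT \cap N = L_2'$, and the first isomorphism theorem yields $\tM/[\tM_2,\tM_2] \cong \tM_1/L_2'$.

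I expect the main obstacle to be Step 1 together with the normality in Step 2: one must resist the temptation to say ``$\tM_1$ commutes with $[\tM_2,\tM_2]$,'' since $\tT \subseteq \tM_1$ only \emph{normalizes} $N$, and one genuinely needs the explicit description $[\tM_2,\tM_2] = \langle M'_\beta : \beta \in \Pi_2\rangle$ to see both that it is normal in all of $\tM$ and that its intersection with $\tT$ is $L_2'$. The second delicate point is the dichotomy in Step 4: whether the central $\mu_2$ is absorbed by $N$ (equivalently by $L_2'$) depends on whether the long simple root $\alpha_n$ lies in $\Pi_2$, and the two cases must be argued separately using Lemma \ref{malpha} and the splitting over $M_\s^{\textnormal{der}}$ respectively.
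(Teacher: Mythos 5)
Your proof is correct, and it follows essentially the route the paper intends: the paper's own proof is only a pointer to the proof of Lemma 3.2 of \cite{abe:irredmodp} together with the remark that one must separate the cases $\alpha_n\in\Pi_2$ and $\alpha_n\notin\Pi_2$, and your argument is precisely that adaptation written out in full — identifying $[\tM_2,\tM_2]$ with $\langle M'_\beta:\beta\in\Pi_2\rangle$ as in the reductive case, and resolving the fate of $\mu_2$ via Lemma \ref{malpha} when $\alpha_n\in\Pi_2$ and via the splitting over $M_\s^{\textnormal{der}}$ when $\alpha_n\notin\Pi_2$, which is exactly the dichotomy the paper flags.
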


\begin{proof}
This follows easily from the proof of Lemma 3.2 of \cite{abe:irredmodp}.  Note that we must split up the proof into the cases $\alpha_n\in\Pi_2$ and $\alpha_n\not\in\Pi_2$.  
\end{proof}

Recall the set $\Pi(\chi)$ used in the statement of the change-of-weight theorem (equation \eqref{defofpichi}).  Given a standard Levi subgroup $\tM$, a weight $\tV$ of $\tM$, and a character $\chi:\cH_{\tM}(\tV)\longrightarrow \fpb$, we let $\Pi_M(\chi)$ denote the analogously defined subset of $\Pi_M$.

\begin{defn}
\label{supersingular}
Let $\tM$ be a standard Levi subgroup of $\tG$ and $\sigma$ a representation of $\tM$.  We say $\sigma$ is \textit{supersingular (with respect to $(\tK\cap \tM,\tT,\tB\cap\tM)$)} if it is irreducible, admissible and genuine, and the following condition is satisfied: for all weights $\tV$ of $\tM$ and for all characters $\chi:\cH_{\tM}(\tV)\longrightarrow \fpb$ such that there exists a nonzero map $\chi\otimes_{\cH_{\tM}(\tV)}\ind_{\tM\cap\tK}^{\tM}(\tV)\longrightarrow\sigma$, we have $\Pi_M(\chi) = \Pi_M$.     
\end{defn}

\begin{remark}
One can similarly define supersingularity with respect to $(\tK' \cap \tM, \tT, \tB \cap \tM)$, where $\tK'$ is a maximal compact subgroup of $\tG$ defined by another hyperspecial point in the apartment corresponding to $T$. We will show that the notion of supersingularity is independent of such a choice of maximal compact $\tK'$; see Lemma \ref{kindep}. In the meantime, we will often abbreviate ``supersingular with respect to $(\tK \cap \tM, \tT, \tB \cap \tM)$'' to ``supersingular.'' 
\end{remark}

As in \cite{abe:irredmodp}, we begin with a standard parabolic subgroup $\tP = \tM\ltimes N^{*}$, and a genuine irreducible admissible supersingular representation $\sigma$ of $\tM$.  We define
$$\Pi(\sigma) := \left\{\alpha\in \Pi:\begin{array}{l}\diamond~ \langle\Pi_M,\alpha^\vee\rangle = 0\\ \diamond~ \textnormal{the group}~\tT\cap M'_\alpha~\textnormal{acts trivially on}~\sigma \end{array} \right\}.$$

\begin{remark}
Note that, unlike the reductive case, we can never have $\alpha_n\in \Pi(\sigma)$ (this follows from Lemma \ref{malpha} and the fact that $\sigma$ is genuine).  
\end{remark}

\begin{remark}\label{factorcoroot}
Since $G$ is simply connected, we have $\tT\cap M'_\alpha = (\tT\cap M'_\alpha\cap \tK)\times\widetilde{\alpha}^\vee(\varpi)^{\bbZ}$.  In particular, $\tT\cap M'_\alpha$ acts trivially on $\sigma$ if and only if $\tT\cap M'_\alpha\cap \tK$ and $\widetilde{\alpha}^\vee(\varpi)$ act trivially.  
\end{remark}

Let $\tP(\sigma) = \tM(\sigma)\ltimes N(\sigma)^{*}$ denote the standard parabolic subgroup corresponding to $\Pi_M\sqcup \Pi(\sigma)$, and let $\tM_1$ denote the Levi subgroup corresponding to the subset $\Pi(\sigma)$.  Since $\sigma$ is trivial on $\tT\cap M'_\alpha$ for every $\alpha\in \Pi(\sigma)$, Lemma \ref{extlemma} implies that we may extend $\sigma$ to a genuine representation of $\tM(\sigma)$ on which $[\tM_1,\tM_1]$ acts trivially.  We further inflate this representation to $\tP(\sigma)^-$ and denote it by ${}^e\sigma$; it is irreducible, admissible, and genuine.

Recall that for any pair of standard parabolic subgroups $P\subset Q$ of $G$, we have the generalized Steinberg representation of $Q^-$:
$$\St_{P^-}^{Q^-}:= \Ind_{P^-}^{Q^-}(1)\Big/\sum_{P^-\subsetneq Q'^-\subset Q^-}\Ind_{Q'^-}^{Q^-}(1),$$
where $1$ denotes the trivial representation of each respective group.  The unipotent radical of $Q^-$ acts trivially on $\St_{P^-}^{Q^-}$, and it is known that $\St_{P^-}^{Q^-}$ is irreducible and admissible (\cite{ly:steinberg}, Th\'eor\`eme 3.1).  We shall view $\St_{P^-}^{Q^-}$ as a (nongenuine) representation of $\tQ^-$ by inflation; the inflated representation continues to be irreducible and admissible.  Moreover, the argument of \cite{abe:irredmodp} Lemma 5.2 and the discussion preceding it show that $\St_{Q^-}^{P(\sigma)^-}|_{[\tM_1,\tM_1]}$ is irreducible and admissible.

We consider triples $(\tP^-,\sigma,\tQ^-)$, where $\tP^-$ and $\sigma$ are as above, and $\tQ^-$ is a parabolic subgroup of $\tG$ satisfying $\tP^-\subset \tQ^-\subset \tP(\sigma)^-$.  We call such a triple a \textit{supersingular triple}.  Two supersingular triples $(\tP^-,\sigma,\tQ^-), (\tP'^-,\sigma',\tQ'^-)$ are defined to be equivalent if $\tP'^- = \tP^-$, $\tQ'^- = \tQ^-$ and $\sigma'\cong \sigma$.  To a triple $(\tP^-,\sigma,\tQ^-)$, we associate the genuine representation ${}^e\sigma\otimes\St_{Q^-}^{P(\sigma)^-}$ (which is irreducible and admissible by \cite{abe:irredmodp}, Lemmas 3.23 and 5.3).  We then set 
$$I(\tP^-,\sigma,\tQ^-) := \Ind_{\tP(\sigma)^-}^{\tG}\left({}^e\sigma\otimes\St_{Q^-}^{P(\sigma)^-}\right).$$
The properties of parabolic induction imply that $I(\tP^-,\sigma,\tQ^-)$ is admissible and genuine.

\subsection{Main results}

We proceed in several stages.

\begin{propn}\label{pichi}
Let $(\tP^-,\sigma,\tQ^-)$ be a supersingular triple, and let $\varphi\in \Hom_{\tK}(\tV, I(\tP^-,\sigma,\tQ^-)|_{\tK})$ denote a nonzero eigenvector for $\cH_{\tG}(\tV)$, with associated eigenvalues $\chi:\cH_{\tG}(\tV)\longrightarrow \fpb$.  Then $\Pi(\chi) = \Pi_M$.  
\end{propn}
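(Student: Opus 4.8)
The plan is to reduce the computation of $\Pi(\chi)$ to the analogous statements for the inducing data $({}^e\sigma, \St_{Q^-}^{P(\sigma)^-})$ on the Levi $\tM(\sigma)$, and then to handle the two factors of $\Pi_M = \Pi_{M} \cap \Pi(\sigma)^c$ separately: the roots "inside" $\Pi_M$ via supersingularity of $\sigma$, and the roots in $\Pi(\sigma)$ via the structure of the generalized Steinberg representation. First I would use the adjunction between parabolic induction $\Ind_{\tP(\sigma)^-}^{\tG}$ and the ordinary-parts functor $\Ord_{\tP(\sigma)}^{\tG}$, together with Proposition \ref{compisom} relating compact and parabolic induction (applied to the $\tM(\sigma)$-regular weights that arise): a nonzero $\tK$-eigenvector $\varphi \in \Hom_{\tK}(\tV, I(\tP^-,\sigma,\tQ^-)|_{\tK})$ with eigencharacter $\chi$ gives rise, after passing to $N(\sigma)$-coinvariants (equivalently, applying $\Ord$), to a nonzero $\tM(\sigma)\cap\tK$-eigenvector in $({}^e\sigma\otimes\St_{Q^-}^{P(\sigma)^-})$ whose Hecke eigencharacter $\chi'$ on $\cH_{\tM(\sigma)}(\tV_{(N(\sigma)^-\cap K)^*})$ satisfies $\chi = \chi' \circ \cS_{\tG}^{\tM(\sigma)}$. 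Since $\cS_{\tG}^{\tM(\sigma)}$ is a localization (Proposition \ref{satakeprop}(3)) and the $\lambda_\alpha$ for $\alpha \in \Pi_{M(\sigma)}$ can be chosen inside $X_*(T)_{M(\sigma),-}$, one checks directly from the definition \eqref{defofpichi} that $\Pi(\chi) \cap \Pi_{M(\sigma)} = \Pi_{M(\sigma)}(\chi')$, while all $\alpha \in \Pi \smallsetminus \Pi_{M(\sigma)}$ automatically lie outside $\Pi(\chi)$ because $T^{\tM(\sigma)}_{\lambda_\alpha}$ becomes invertible under the localization. This reduces the claim to showing $\Pi_{M(\sigma)}(\chi') = \Pi_M$.

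Next I would analyze $\chi'$ using the tensor factorization ${}^e\sigma \otimes \St_{Q^-}^{P(\sigma)^-}$ and the fact (from $\S$\ref{classnchapter}) that $[\tM_1,\tM_1]$ acts trivially on ${}^e\sigma$ while the unipotent radical of $Q^-$ acts trivially on $\St_{Q^-}^{P(\sigma)^-}$, which is an irreducible representation inflated from (a quotient of an induction on) $M_1 = M_{\Pi(\sigma)}$. For $\alpha \in \Pi_M$: restricting the eigenvector to the relevant rank-one Hecke subalgebra and using that $\sigma$ is supersingular — Definition \ref{supersingular} — forces $(\chi'\circ(\cS_{\tM(\sigma)}^{\tT})^{-1})(\tau_{\lambda_\alpha}) \ne 0$ via the compatibility of $\Pi_M(\chi_\sigma)$ (for the character $\chi_\sigma$ of the $\sigma$-part) with $\Pi_M$; more precisely, one shows $\Pi_M \subseteq \Pi_{M(\sigma)}(\chi')$ by observing that a relation forcing some $\alpha \in \Pi_M$ into $\Pi(\chi')$ would produce, via the localization at $T_{\lambda_0}^{\tM}$ with $\lambda_0 = \sum_{\alpha \in \Pi(\sigma)}\lambda_\alpha$, a nonzero map $\chi_\sigma \otimes_{\cH_{\tM}(\tW)} \ind_{\tM\cap\tK}^{\tM}(\tW) \to \sigma$ with $\Pi_M(\chi_\sigma) \subsetneq \Pi_M$, contradicting supersingularity of $\sigma$. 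For $\alpha \in \Pi(\sigma)$, I claim $\alpha \notin \Pi(\chi')$, i.e. $(\chi'\circ(\cS_{\tM(\sigma)}^{\tT})^{-1})(\tau_{\lambda_\alpha}) \ne 0$: this follows because $\St_{Q^-}^{P(\sigma)^-}$, restricted to $[\tM_1,\tM_1]$, has its $\tK$-socle governed by the generalized Steinberg weights, whose associated Satake parameters are nonzero on exactly the operators attached to roots in $\Pi(\sigma)$ — this is precisely the computation underlying Proposition \ref{compfactors} and the change-of-weight normalization for short roots; alternatively one invokes the reductive-case statement (\cite{abehenniartherzigvigneras:irredmodp}, or \cite{abe:irredmodp} Lemma 5.2 and its surrounding discussion) applied to $[\tM_1,\tM_1]$, which is a genuine reductive group (a product of $\textnormal{SL}_2$'s and $\widetilde{\textnormal{SL}}_2$ by Lemma \ref{malpha}). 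Combining the two inclusions gives $\Pi_M \subseteq \Pi_{M(\sigma)}(\chi') \subseteq \Pi_M$, hence equality, and then $\Pi(\chi) = \Pi_M$.

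The main obstacle I anticipate is the $\alpha \in \Pi(\sigma)$ case: one must verify that the generalized Steinberg factor contributes a \emph{nonzero} Satake eigenvalue for every $\lambda_\alpha$ with $\alpha \in \Pi(\sigma)$, i.e. that none of these roots fall into $\Pi(\chi')$. This is exactly the point where the metaplectic computation diverges from the reductive one (the change-of-weight criterion is strictly weaker for the long root $\alpha_n$), but since $\alpha_n \notin \Pi(\sigma)$ always — by the Remark after the definition of $\Pi(\sigma)$, as $\sigma$ is genuine — only short roots occur in $\Pi(\sigma)$, and for short roots Proposition \ref{propn:st2l} gives $\cS_{\tG}^{\tT}(T_{2\lambda}) = \tau_{2\lambda} - \tau_{2\lambda + \alpha_i^\vee}$, matching the reductive behavior. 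Thus the reductive argument transports verbatim once one has set up the reduction to $[\tM_1,\tM_1]$ correctly; the bookkeeping of the weights $\tV_{(N(\sigma)^-\cap K)^*}$ and the identification of Hecke algebras under $\cS$ is routine but must be done carefully using Lemma \ref{weights} and Proposition \ref{satakeprop}.
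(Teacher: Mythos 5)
Your outline is essentially the paper's proof: the paper's argument for Proposition \ref{pichi} is a direct appeal to III.18 Corollary of \cite{abehenniartherzigvigneras:irredmodp} together with the remark that $\St_{Q^-}^{P(\sigma)^-}$ is nongenuine and descends to the reductive quotient, which is exactly the division of labor you describe (supersingularity of $\sigma$ forces the eigenvalues in the $\Pi_M$-directions to vanish, while the Steinberg/trivial factor and the localization give nonzero eigenvalues for $\alpha\in\Pi(\sigma)$ and for $\alpha\notin\Pi_{M(\sigma)}$, the long root never lying in $\Pi(\sigma)$). Two small corrections: the factorization $\chi=\chi'\circ\cS_{\tG}^{\tM(\sigma)}$ should come from the Hecke-compatible isomorphism $\Hom_{\tK}(\tV,I(\tP^-,\sigma,\tQ^-)|_{\tK})\cong\Hom_{\tM(\sigma)\cap\tK}(\tV_{(N(\sigma)^-\cap K)^*},{}^e\sigma\otimes\St_{Q^-}^{P(\sigma)^-})$ (III.18 Proposition of \emph{loc.\ cit.}, i.e.\ diagram (4) of \cite{henniartvigneras:cptparabolic}) rather than from Proposition \ref{compisom}, which assumes an $\tM(\sigma)$-regularity that an arbitrary weight of $I(\tP^-,\sigma,\tQ^-)|_{\tK}$ need not satisfy; and in your supersingularity step the contradiction should come from a root of $\Pi_M$ falling \emph{out of} $\Pi(\chi')$ (i.e.\ a nonzero eigenvalue), which would produce an eigensystem $\chi_\sigma$ for $\sigma$ with $\Pi_M(\chi_\sigma)\subsetneq\Pi_M$.
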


\begin{proof}
This follows in exactly the same way as III.18 Corollary of \cite{abehenniartherzigvigneras:irredmodp}.  Note that necessary properties of $\St_{P^-}^{Q^-}$ follow from the reductive case, since this representation is nongenuine, and therefore descends to the reductive quotient.  
\end{proof}

\begin{propn}\label{irred}
Given a supersingular triple $(\tP^-,\sigma,\tQ^-)$, the representation $I(\tP^-,\sigma,\tQ^-)$ is irreducible, admissible, and genuine.  
\end{propn}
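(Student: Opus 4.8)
Both admissibility and genuineness of $I(\tP^-,\sigma,\tQ^-)$ are automatic: the functor $\Ind_{\tP(\sigma)^-}^{\tG}(-)$ preserves admissibility and genuineness (see $\S$\ref{repns}), and ${}^e\sigma\otimes\St_{Q^-}^{P(\sigma)^-}$ is admissible, genuine, and irreducible by \cite{abe:irredmodp}, Lemmas 3.23 and 5.3. The content of the proposition is therefore irreducibility, and the plan is to transport the argument of \cite{abehenniartherzigvigneras:irredmodp} (equivalently \cite{abe:irredmodp}, $\S$4) to $\tG$. The base case $\tP^-=\tG$ is immediate: then $\Pi(\sigma)=\emptyset$, so $\tP(\sigma)^-=\tQ^-=\tG$ and $\St_{G^-}^{G^-}=1$, whence $I(\tG,\sigma,\tG)=\sigma$, which is irreducible by the definition of supersingularity. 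Assume henceforth $P\neq G$.

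The engine of the argument is the behaviour of ordinary parts. Applying the adjunction isomorphism $\rho\cong\Ord_{\tH}^{\tL}\circ\Ind_{\tH^-}^{\tL}(\rho)$ of $\S$\ref{repns} with $\tH=\tP(\sigma)$ gives
\[
\Ord_{\tP(\sigma)}^{\tG}\big(I(\tP^-,\sigma,\tQ^-)\big)\cong{}^e\sigma\otimes\St_{Q^-}^{P(\sigma)^-},
\]
which is irreducible. More generally one needs, for each standard parabolic $\tP'$ with Levi $\tM'$, the geometric-lemma-type filtration of $\Ord_{\tP'^-}^{\tG}\circ\Ind_{\tP(\sigma)^-}^{\tG}$ by twists of $\Ind\circ\Ord$ of ${}^e\sigma\otimes\St_{Q^-}^{P(\sigma)^-}$ along smaller parabolics, indexed by the relevant double cosets, together with the vanishing of all but the bottom term when $\tP\subseteq\tP'\subseteq\tP(\sigma)$ (the vanishing coming from the supersingularity of $\sigma$ and the generalized-Steinberg structure of $\St_{Q^-}^{P(\sigma)^-}$). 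Because $\St_{Q^-}^{P(\sigma)^-}$ is inflated from $G$, because the cover splits uniquely over every unipotent subgroup and over $\tK$, and because $\Ord$, $\Ind$, and the underlying double-coset combinatorics are all governed by $G$, this entire package is deduced from the reductive case. The one genuinely metaplectic feature --- that $\alpha_n$ never lies in $\Pi(\sigma)$ --- merely splits the combinatorial analysis into the two cases $\alpha_n\in\Pi_M$ and $\alpha_n\notin\Pi_M$, exactly as in the proofs of Lemmas \ref{malpha} and \ref{extlemma}.

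With these tools the irreducibility is obtained as in \cite{abehenniartherzigvigneras:irredmodp}. For a nonzero subrepresentation $\pi\subseteq I:=I(\tP^-,\sigma,\tQ^-)$, left-exactness of $\Ord_{\tP(\sigma)}^{\tG}$ and the irreducibility of $\Ord_{\tP(\sigma)}^{\tG}(I)$ force either $\Ord_{\tP(\sigma)}^{\tG}(\pi)=0$ or $\Ord_{\tP(\sigma)}^{\tG}(\pi)\cong{}^e\sigma\otimes\St_{Q^-}^{P(\sigma)^-}$. In the second case the adjunction between $\Ind_{\tP(\sigma)^-}^{\tG}$ and $\Ord_{\tP(\sigma)}^{\tG}$, applied to the identity map of ${}^e\sigma\otimes\St_{Q^-}^{P(\sigma)^-}$, produces a nonzero $\tG$-map $I\to\pi$; composing with $\pi\hookrightarrow I$ gives a $\tG$-endomorphism of $I$ that restricts to an isomorphism on $\Ord_{\tP(\sigma)}^{\tG}$, and the structure of $I$ furnished by the geometric-lemma computation of $\Ord_{\tP'^-}^{\tG}(I)$ for all $\tP'$ (in particular its finite length) forces this endomorphism to be surjective, hence an isomorphism, so $\pi=I$. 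The first case is ruled out by the same computation together with Proposition \ref{pichi}: the geometric lemma constrains which parabolics can carry a given subquotient of $I$, while Proposition \ref{pichi} shows that every such subquotient has a weight of $\tK$ whose $\cH_{\tG}(\tV)$-eigensystem $\chi$ satisfies $\Pi(\chi)=\Pi_M$; since $P\neq G$ we have $\Pi_M\subsetneq\Pi$, and this is incompatible with $\pi$ being annihilated by $\Ord_{\tP(\sigma)}^{\tG}$ while embedding into $I$.

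The step I expect to be the main obstacle is making the ordinary-parts machinery of the second paragraph completely precise over $\tG$: establishing the geometric lemma for $\Ord$ and pinning down $\Ord_{\tP'^-}^{\tG}(I(\tP^-,\sigma,\tQ^-))$ for every standard $\tP'$, which in turn controls $\End_{\tG}(I)$ and the length of $I$. In the reductive setting this is the technical heart of \cite{abehenniartherzigvigneras:irredmodp}; here the real work is checking that none of it is sensitive to the cover, the only genuine new bookkeeping being the systematic separation into the cases $\alpha_n\in\Pi_M$ and $\alpha_n\notin\Pi_M$ and the observation that the weaker change-of-weight criterion of Theorem \ref{thm:changeofwt} is used only to produce weights, never to eliminate them, so it entails no loss in this argument.
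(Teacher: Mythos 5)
Your reduction to irreducibility is fine (admissibility and genuineness do follow from the properties of parabolic induction and of ${}^e\sigma\otimes\St_{Q^-}^{P(\sigma)^-}$), but the route you take to irreducibility has a genuine gap at its core, and it is not the paper's route. The entire second paragraph --- a ``geometric-lemma-type filtration'' of $\Ord_{\tP'^-}^{\tG}\circ\Ind_{\tP(\sigma)^-}^{\tG}$ for every standard $\tP'$, with vanishing of all but the bottom term, finite length of $I(\tP^-,\sigma,\tQ^-)$, and control of $\End_{\tG}(I)$ --- is asserted, not proved, and it is not available off the shelf. In the mod-$p$ setting $\Ord_{\tP'}^{\tG}$ is only left exact, and there is no Bernstein--Zelevinsky-style geometric lemma for $\Ord\circ\Ind$ at this level of generality: the only input the paper has (and uses, in Proposition \ref{inj}, not here) is the adjunction $\sigma\cong\Ord_{\tH}^{\tL}\circ\Ind_{\tH^-}^{\tL}(\sigma)$ from \cite{vigneras:rightadjoint}. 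Computing $\Ord_{\tP'^-}^{\tG}$ of a parabolic induction along a \emph{different} parabolic is a substantial theorem even for reductive groups, and it cannot simply be ``deduced from the reductive case'' here because the induced representation ${}^e\sigma\otimes\St_{Q^-}^{P(\sigma)^-}$ is genuine, so it does not descend to $G$; the splitting of the cover over unipotents and $\tK$ does not by itself transport such a computation. Without this package, your third paragraph collapses: you cannot rule out the case $\Ord_{\tP(\sigma)}^{\tG}(\pi)=0$ (the appeal to Proposition \ref{pichi} plus an unproved double-coset analysis is not an argument), and you cannot conclude that the endomorphism of $I$ you construct is surjective, since that relied on the same unproved finite-length and endomorphism-ring control.

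For contrast, the paper proves irreducibility by a weight/Hecke-eigenvalue generation argument, with no ordinary parts at all: given any weight $\tV=\tF(\nu)$ in $I|_{\tK}$ and any $\cH_{\tG}(\tV)$-eigenvector $\varphi$ with eigensystem $\chi$, one shows $\varphi(\tV)$ generates $I$, by induction on $|\Pi_\nu\smallsetminus(\Pi_M\sqcup\Pi(\sigma))|$. The base case uses Proposition \ref{compisom} to turn $\boldsymbol{\varphi}$ into a surjection $\chi\otimes_{\cH_{\tG}(\tV)}\ind_{\tK}^{\tG}(\tV)\twoheadrightarrow I$; the inductive step uses Proposition \ref{pichi} ($\Pi(\chi)=\Pi_M$) together with the change-of-weight Theorem \ref{thm:changeofwt}, whose hypotheses must be verified: for $\alpha\neq\alpha_n$ with $\langle\Pi(\chi),\alpha^\vee\rangle=0$ and eigenvalue $1$ on $\tau_{\alpha^\vee}$, one would conclude via Remark \ref{factorcoroot} and Lemma 5.5 of \cite{abe:irredmodp} that $\tT\cap M'_\alpha$ acts trivially on $\sigma$, contradicting $\alpha\notin\Pi(\sigma)$, while for $\alpha=\alpha_n$ the metaplectic change of weight holds unconditionally. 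So, contrary to your closing remark, the change-of-weight theorem is the engine of the irreducibility proof, not a peripheral device; if you want a complete argument you should either supply the ordinary-parts machinery you invoke (a significant undertaking) or switch to this eigenvector-generation strategy.
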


\begin{proof}
It only remains to prove that $I(\tP^-,\sigma,\tQ^-)$ is irreducible.  Let $\tV$ be a weight appearing in $I(\tP^-,\sigma,\tQ^-)|_{\tK}$, and assume that $\varphi\in \textnormal{Hom}_{\tK}(\tV,I(\tP^-,\sigma,\tQ^-)|_{\tK})$ is a nonzero eigenvector for $\cH_{\tG}(\tV)$, the existence of which follows from admissibility.  Let $\chi:\cH_{\tG}(\tV)\longrightarrow \fpb$ denote the set of eigenvalues of $\varphi$.  The map $\varphi$ induces a nonzero $\tG$-morphism
$$\boldsymbol{\varphi}:\ind_{\tK}^{\tG}(\tV)\longrightarrow I(\tP^-,\sigma,\tQ^-),$$
and we let
$$\boldsymbol{\varphi}_{\tM(\sigma)}:\ind_{\tM(\sigma)\cap \tK}^{\tM(\sigma)}(\tV_{(N(\sigma)^-\cap K)^*})\longtwoheadrightarrow {}^e\sigma\otimes\St_{Q^-}^{P(\sigma)^-}$$
denote the $\tM(\sigma)$-equivariant map associated to $\boldsymbol{\varphi}$ (cf. \cite{henniartvigneras:cptparabolic}, equation (2)), which is surjective by irreducibility of ${}^e\sigma\otimes\St_{Q^-}^{P(\sigma)^-}$.  Since parabolic induction is exact, we obtain a $\tG$-equivariant surjection
$$\Ind_{\tP(\sigma)^-}^{\tG}(\boldsymbol{\varphi}_{\tM(\sigma)}):\Ind_{\tP(\sigma)^-}^{\tG}\left(\ind_{\tM(\sigma)\cap \tK}^{\tM(\sigma)}(\tV_{(N(\sigma)^-\cap K)^*})\right)\longtwoheadrightarrow I(\tP^-,\sigma,\tQ^-).$$

We claim that $\varphi(\tV)$ generates $I(\tP^-,\sigma,\tQ^-)$.  We write $\tV = \tF(\nu)$ for $\nu\in X_q(T)$, and use induction on $|\Pi_\nu\smallsetminus(\Pi_M\sqcup \Pi(\sigma))|$.  If this set has size 0, we have $\Pi_\nu\subset \Pi_M\sqcup \Pi(\sigma)$, and Proposition \ref{compisom} and the discussion above show that we have a surjection
$$\cH_{\tM(\sigma)}(\tV_{(N(\sigma)^-\cap K)^*})\otimes_{\cH_{\tG}(\tV)}\ind_{\tK}^{\tG}(\tV)\longtwoheadrightarrow I(\tP^-,\sigma,\tQ^-).$$
By Proposition \ref{satakeprop}(3) and the diagram on p. 465 of \textit{loc. cit.}, this map is the localization of $\boldsymbol{\varphi}$ at an element whose Satake transform is central and invertible in $\cH_{\tM(\sigma)}(\tV_{(N(\sigma)^-\cap K)^*})$.  Therefore, diagram (4) of \textit{loc. cit.} shows that the character $\chi$ extends to $\cH_{\tM(\sigma)}(\tV_{(N(\sigma)^-\cap K)^*})$, and the localization of $\boldsymbol{\varphi}$ descends to a surjection (which we denote by the same letter)
$$\boldsymbol{\varphi}:\chi\otimes_{\cH_{\tG}(\tV)}\ind_{\tK}^{\tG}(\tV)\longtwoheadrightarrow I(\tP^-,\sigma,\tQ^-).$$
We conclude that $\varphi(\tV)$ generates $I(\tP^-,\sigma,\tQ^-)$.

Assume now that $|\Pi_\nu\smallsetminus(\Pi_M\sqcup \Pi(\sigma))| > 0$, and fix $\alpha\in \Pi_\nu\smallsetminus(\Pi_M\sqcup\Pi(\sigma))$.  Since $\Pi_M = \Pi(\chi)$ by Proposition \ref{pichi}, we have that $\alpha\not\in\Pi(\chi)$.  Suppose for the moment that $\alpha\neq \alpha_n$, and that $\langle\Pi(\chi),\alpha^\vee\rangle = 0$.  We then have $\alpha^\vee\in X_*(T)_{M,-}$ and $\cS_{\tM}^{\tT}(T_{\alpha^\vee}^{\tM}) = \tau_{\alpha^\vee}$ (cf. equation \eqref{satakeeq}).  By III.18 Proposition of \cite{abehenniartherzigvigneras:irredmodp}, the $\cH_{\tG}(\tV)$-eigenvector $\varphi$ descends to the $\cH_{\tM}(\tV_{(N^-\cap K)^*})$-eigenvector $\varphi_M:\tV_{(N^-\cap K)^*}\longhookrightarrow \sigma|_{\tK\cap \tM}$ (with corresponding eigenvalues $\chi'$), so that 
$$(\chi'\circ(\cS_{\tM}^{\tT})^{-1})(\tau_{\alpha^\vee})\varphi_M(\overline{v}) = \chi'(T_{\alpha^\vee}^{\tM})\varphi_M(\overline{v}) = (\varphi_M* T_{\alpha^\vee}^{\tM})(\overline{v}) = \widetilde{\alpha}^{\vee}(\varpi)^{-1}\cdot(\varphi_M(\overline{v})),$$
where $v\in \tV$, and where the last equality follows from the fact that $\widetilde{\alpha}^\vee(\varpi)$ is central in $\tM$.   Using the fact that $\alpha\neq \alpha_n$, Lemma 5.5 of \cite{abe:irredmodp} shows that $\tT\cap M'_\alpha\cap \tK$ acts trivially on $\sigma$.

Suppose now only that $\alpha\neq \alpha_n$.  Since $\alpha\not\in\Pi(\sigma)$, either $\langle\Pi(\chi),\alpha^\vee\rangle\neq 0$, or $\langle\Pi(\chi),\alpha^\vee\rangle = 0$ and the subgroup $\tT\cap M_\alpha'$ acts nontrivially on $\sigma$.  In the latter case, the above discussion and Remark \ref{factorcoroot} show that $\widetilde{\alpha}^\vee(\varpi)$ acts nontrivially, and consequently $(\chi'\circ(\cS_{\tM}^{\tT})^{-1})(\tau_{\alpha^\vee})\neq 1$.  We are therefore in a position to apply Theorem \ref{thm:changeofwt} in all cases, and the result follows by induction.  
\end{proof}

\begin{cor}\label{parabindjh}
Let $\tP = \tM\ltimes N^{*}$ denote a standard parabolic subgroup of $\tG$, and $\sigma$ an irreducible admissible genuine supersingular representation of $\tM$.   Then the composition factors of $\Ind_{\tP^-}^{\tG}(\sigma)$ are given by $\{I(\tP^-,\sigma,\tQ^-):\tP^-\subset \tQ^- \subset \tP(\sigma)^-\}$.  
\end{cor}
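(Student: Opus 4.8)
The plan is to deduce Corollary \ref{parabindjh} from the irreducibility result of Proposition \ref{irred} together with the exactness of parabolic induction, following the template of \cite{abe:irredmodp} (and \cite{abehenniartherzigvigneras:irredmodp}). First I would recall the filtration of the generalized Steinberg picture: by \cite{ly:steinberg} (and the discussion in \cite{abe:irredmodp}, $\S$5), the representation $\Ind_{\tP^-\cap \tM(\sigma)^-}^{\tM(\sigma)^-}(1)$ — equivalently $\Ind_{(\tP^-\cap \tM_1)}^{\tM_1}(1)$ after inflating ${}^e\sigma$ across $[\tM_1,\tM_1]$ — has a composition series whose graded pieces are exactly the generalized Steinberg representations $\St_{Q^-}^{P(\sigma)^-}$ for $\tP^-\subset \tQ^-\subset \tP(\sigma)^-$, each occurring with multiplicity one. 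Tensoring with ${}^e\sigma$ (which is irreducible, admissible, genuine, and on which $[\tM_1,\tM_1]$ acts trivially) and using \cite{abe:irredmodp}, Lemmas 3.23 and 5.3, this gives a composition series of ${}^e\sigma\otimes \Ind_{\tP^-}^{\tP(\sigma)^-}(1)$ with graded pieces ${}^e\sigma\otimes\St_{Q^-}^{P(\sigma)^-}$, again each with multiplicity one.

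Next I would apply $\Ind_{\tP(\sigma)^-}^{\tG}(-)$, which is exact (as recalled in $\S$\ref{repns}, via \cite{vigneras:rightadjoint}) and preserves admissibility and genuineness. By transitivity of parabolic induction, $\Ind_{\tP(\sigma)^-}^{\tG}\left({}^e\sigma\otimes\Ind_{\tP^-}^{\tP(\sigma)^-}(1)\right)\cong \Ind_{\tP^-}^{\tG}(\sigma)$: here one uses that inducing ${}^e\sigma$ from $\tP(\sigma)^-$ to $\tP(\sigma)^-$ first inflating trivially and then inducing the trivial representation of $\tP^-\cap\tM(\sigma)$ recovers inflation of $\sigma$ from $\tP^-$; this is the standard identification ${}^e\sigma\otimes\Ind_{\tP^-\cap \tM(\sigma)^-}^{\tM(\sigma)^-}(1)\cong\Ind_{\tP^-\cap\tM(\sigma)^-}^{\tM(\sigma)^-}(\sigma|_{\ldots})$ followed by induction in stages. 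Applying $\Ind_{\tP(\sigma)^-}^{\tG}$ to the composition series of the previous paragraph then yields a finite filtration of $\Ind_{\tP^-}^{\tG}(\sigma)$ whose graded pieces are precisely the representations $I(\tP^-,\sigma,\tQ^-) = \Ind_{\tP(\sigma)^-}^{\tG}\left({}^e\sigma\otimes\St_{Q^-}^{P(\sigma)^-}\right)$ for $\tP^-\subset\tQ^-\subset\tP(\sigma)^-$, each appearing once. By Proposition \ref{irred}, each such $I(\tP^-,\sigma,\tQ^-)$ is irreducible, so this filtration is in fact a composition series, and the list of composition factors of $\Ind_{\tP^-}^{\tG}(\sigma)$ is exactly $\{I(\tP^-,\sigma,\tQ^-):\tP^-\subset\tQ^-\subset\tP(\sigma)^-\}$ as claimed.

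The main obstacle I anticipate is bookkeeping rather than conceptual: one must be careful that the filtration of $\Ind_{\tP^-}^{\tP(\sigma)^-}(1)$ by generalized Steinberg representations indexed by intermediate parabolics goes through verbatim in the metaplectic setting. Since the generalized Steinberg representations $\St_{Q^-}^{P(\sigma)^-}$ are inflated from the reductive quotient $M(\sigma)^-/[M_1,M_1]$ — see the remark after Proposition \ref{pichi} and the discussion in $\S$\ref{classnchapter} — their composition-series behavior is literally that of the reductive group, so \cite{ly:steinberg} and \cite{abe:irredmodp}, $\S$5 apply directly; the only genuine ingredient is that tensoring by ${}^e\sigma$ preserves irreducibility of each graded piece, which is exactly \cite{abe:irredmodp}, Lemmas 3.23 and 5.3 (applicable here because ${}^e\sigma$ restricted to $[\tM_1,\tM_1]$ is trivial and $\St_{Q^-}^{P(\sigma)^-}|_{[\tM_1,\tM_1]}$ is irreducible, as noted in $\S$\ref{classnchapter}). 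Thus the proof reduces to citing these facts and invoking exactness of $\Ind_{\tP(\sigma)^-}^{\tG}$ and Proposition \ref{irred}; concretely, the write-up can simply say: ``This follows from Proposition \ref{irred} and the exactness of parabolic induction, exactly as in \cite{abehenniartherzigvigneras:irredmodp} (see also \cite{abe:irredmodp}, $\S$5), using the Steinberg filtration of $\Ind_{\tP^-}^{\tP(\sigma)^-}(1)$ and induction in stages.''
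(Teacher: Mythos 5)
Your argument is correct and follows essentially the same route as the paper's proof: the projection formula $\Ind_{\tP^-\cap\tM(\sigma)}^{\tM(\sigma)}(\sigma)\cong{}^e\sigma\otimes\Ind_{\tP^-\cap\tM(\sigma)}^{\tM(\sigma)}(1)$, the generalized Steinberg composition series of the nongenuine factor, exactness of parabolic induction, and Proposition \ref{irred} to conclude the induced pieces are irreducible. The only difference is bookkeeping of citations: the paper packages the composition factors of ${}^e\sigma\otimes\Ind_{\tP^-\cap\tM(\sigma)}^{\tM(\sigma)}(1)$ via \cite{herzig:modpgln}, Corollary 7.3, whereas you reassemble the same fact from \cite{ly:steinberg} and \cite{abe:irredmodp}, Lemmas 3.23 and 5.3, which is equally fine.
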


\begin{proof}
By definition of the parabolic subgroup $\tP(\sigma)^- = \tM(\sigma)\ltimes N(\sigma)^{-,*}$, we have an extension ${}^e\sigma$ to $\tM(\sigma)$, which satisfies $\Ind_{\tP^-\cap \tM(\sigma)}^{\tM(\sigma)}(\sigma)\cong {}^e\sigma\otimes\Ind_{\tP^-\cap \tM(\sigma)}^{\tM(\sigma)}(1)$.  Since $\Ind_{\tP^-\cap \tM(\sigma)}^{\tM(\sigma)}(1)$ is a nongenuine representation inflated from the representation $\Ind_{P^-\cap M(\sigma)}^{M(\sigma)}(1)$ of $M(\sigma)$, the composition factors of ${}^e\sigma\otimes\Ind_{\tP^-\cap\tM(\sigma)}^{\tM(\sigma)}(1)$ are given by $\{{}^e\sigma\otimes\St_{Q^-}^{P(\sigma)^-}: P^-\subset Q^-\subset P(\sigma)\}$ (\cite{herzig:modpgln}, Corollary 7.3).  Since parabolic induction is exact, the result follows.  
\end{proof}

\begin{propn}\label{inj}
Assume $(\tP^-, \sigma, \tQ^-)$ and $(\tP'^-, \sigma', \tQ'^-)$ are two supersingular triples such that 
$$I(\tP^-, \sigma, \tQ^-)\cong I(\tP'^-, \sigma', \tQ'^-).$$
Then $(\tP^-, \sigma, \tQ^-)$ and $(\tP'^-, \sigma', \tQ'^-)$ are equivalent.  
\end{propn}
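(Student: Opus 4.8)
The plan is to recover, from the isomorphism class of $\pi := I(\tP^-,\sigma,\tQ^-)$, the three pieces of data $\tP^-$, $\sigma$, and $\tQ^-$ in turn, following the uniqueness argument of the reductive classification (cf. \cite{abehenniartherzigvigneras:irredmodp}, \cite{abe:irredmodp}). A convenient point is that the generalized Steinberg factors $\St_{Q^-}^{P(\sigma)^-}$ are nongenuine, hence factor through the reductive quotient, so that all facts about them may be imported verbatim from the reductive theory; the genuine content sits entirely in $\sigma$ and ${}^e\sigma$. First I would recover $\tP^-$: since $\pi$ is admissible and irreducible, there is a weight $\tV$ occurring in $\pi|_{\tK}$ and a nonzero eigenvector $\varphi\in\Hom_{\tK}(\tV,\pi|_{\tK})$ for the commutative algebra $\cH_{\tG}(\tV)$ (Proposition \ref{satakeprop}(4)), with eigencharacter $\chi$. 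Applying Proposition \ref{pichi} to each of the two supersingular triples with this same $\tV,\varphi,\chi$ gives $\Pi_M=\Pi(\chi)=\Pi_{M'}$, hence $\tM=\tM'$ and $\tP^-=\tP'^-$.

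Next I would recover $\sigma$, essentially as the supersingular support of $\pi$. Concretely, I claim $\Ord_{\tP}^{\tG}(\pi)\cong\sigma$: writing $\pi=\Ind_{\tP(\sigma)^-}^{\tG}({}^e\sigma\otimes\St_{Q^-}^{P(\sigma)^-})$ and combining transitivity of the ordinary-parts functor with the identity $\Ord_{\tP(\sigma)}^{\tG}\circ\Ind_{\tP(\sigma)^-}^{\tG}=\textnormal{id}$ (\S\ref{repns}), this reduces to the statement $\Ord_{\tP\cap\tM(\sigma)}^{\tM(\sigma)}({}^e\sigma\otimes\St_{Q^-}^{P(\sigma)^-})\cong\sigma$, which follows as in the reductive case (\cite{abe:irredmodp}): one uses that $[\tM_1,\tM_1]$ acts trivially on ${}^e\sigma$, and that taking ordinary parts of the nongenuine generalized Steinberg along the $\Pi(\sigma)$-directions returns the trivial character of $\tM$. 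Since $\tP=\tP'$ by the previous step, this yields $\sigma\cong\Ord_{\tP}^{\tG}(\pi)\cong\sigma'$; consequently $\tP(\sigma)=\tP(\sigma')$, and by uniqueness of the extension of $\sigma$ to $\tM(\sigma)$ trivial on $[\tM_1,\tM_1]$ (Lemma \ref{extlemma}) we obtain ${}^e\sigma\cong{}^e\sigma'$.

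Finally I would recover $\tQ^-$. Applying $\Ord_{\tP(\sigma)}^{\tG}$ to the isomorphism $\pi\cong\pi'$ and invoking $\Ord_{\tP(\sigma)}^{\tG}\circ\Ind_{\tP(\sigma)^-}^{\tG}=\textnormal{id}$ once more, I would obtain ${}^e\sigma\otimes\St_{Q^-}^{P(\sigma)^-}\cong{}^e\sigma\otimes\St_{Q'^-}^{P(\sigma)^-}$ as $\tM(\sigma)$-representations. Restricting to $[\tM_1,\tM_1]$, on which ${}^e\sigma$ acts trivially, this forces $\St_{Q^-}^{P(\sigma)^-}|_{[\tM_1,\tM_1]}\cong\St_{Q'^-}^{P(\sigma)^-}|_{[\tM_1,\tM_1]}$; these are irreducible representations of $[\tM_1,\tM_1]\cong[M_1,M_1]$, which is a product of special linear groups since $\alpha_n\notin\Pi(\sigma)$. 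As distinct generalized Steinberg representations of a split reductive group are pairwise nonisomorphic (cf. \cite{ly:steinberg}), this forces $Q=Q'$, hence $\tQ^-=\tQ'^-$, and the two triples are equivalent.

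I expect the main obstacle to be the ordinary-parts computation $\Ord_{\tP\cap\tM(\sigma)}^{\tM(\sigma)}({}^e\sigma\otimes\St_{Q^-}^{P(\sigma)^-})\cong\sigma$: it rests on a projection-type formula for $\Ord$ against a representation trivial on $[\tM_1,\tM_1]$, together with the value of $\Ord$ on a generalized Steinberg. Both ingredients are available in the reductive setting, and because the Steinberg factor is nongenuine no new metaplectic phenomena arise; the work is in keeping the transitivity formula and the identifications of the various Levi subgroups straight. Everything else is formal, given the results already established in \S\ref{repns}, \S\ref{subsec:satake}, and \S\ref{ch:changeofwt}.
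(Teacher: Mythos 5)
Your overall route is the same as the paper's, which simply defers to \S III.24 of \cite{abehenniartherzigvigneras:irredmodp} and the functor of ordinary parts; your first step (using Proposition \ref{pichi} and a Hecke eigenvector to force $\Pi_M=\Pi(\chi)=\Pi_{M'}$, hence $\tP^-=\tP'^-$) and the shape of your last step are fine. The problem is the central claim of your second step: the asserted isomorphism $\Ord_{\tP}^{\tG}\bigl(I(\tP^-,\sigma,\tQ^-)\bigr)\cong\sigma$ is false whenever $Q\supsetneq P$. After reducing via transitivity and $\Ord_{\tP(\sigma)}^{\tG}\circ\Ind_{\tP(\sigma)^-}^{\tG}=\mathrm{id}$, you need $\Ord_{\tP\cap\tM(\sigma)}^{\tM(\sigma)}\bigl(\St_{Q^-}^{P(\sigma)^-}\bigr)$ to be the trivial character of $\tM$; but this ordinary part is trivial only when $Q=P$, and it \emph{vanishes} when $Q\cap M(\sigma)\supsetneq P\cap M(\sigma)$. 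Indeed, by adjunction a nonzero value would exhibit $\St_{Q^-}^{P(\sigma)^-}$ as a quotient of some $\Ind_{(\tP\cap\tM(\sigma))^-}^{\tM(\sigma)}(\tau)$, whereas every irreducible quotient of such an induction (with trivial supercuspidal support) is a generalized Steinberg attached to a parabolic contained in $P\cap M(\sigma)$, since the cosocle of $\Ind_{R^-}(1)$ is $\St_{R^-}$. The simplest counterexample is $Q=P(\sigma)$, where the Steinberg factor is the trivial representation: already for a $\textnormal{GL}_2$-block one has the nonsplit sequence $0\to 1\to \Ind_{B^-}(1)\to \St\to 0$, so $\Ord_{B}(1)=0$, not the trivial character. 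Consequently $\Ord_{\tP}^{\tG}(\pi)=0$ for $Q\neq P$, your recovery of $\sigma$ fails, and with it the identification $P(\sigma)=P(\sigma')$ that your third step presupposes.

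The correct bookkeeping runs in the opposite direction: for a standard parabolic $\tP'$ with $\tP^-\subset\tP'^-\subset\tP(\sigma)^-$ one has $\Ord_{\tP'}^{\tG}(\pi)\neq 0$ only if $Q\subset P'$, and for instance $\Ord_{\tQ}^{\tG}(\pi)$ is ${}^e\sigma$ restricted to the preimage of the Levi factor of $Q$, while $\Ord_{\tP(\sigma)}^{\tG}(\pi)\cong{}^e\sigma\otimes\St_{Q^-}^{P(\sigma)^-}$. So $\sigma$, $Q$, and $P(\sigma)$ must be extracted jointly from the pattern of nonvanishing ordinary parts (together with restriction to $[\tM_1,\tM_1]$ to separate the Steinberg factors, as you do), which is exactly how the argument of \S III.24 of \cite{abehenniartherzigvigneras:irredmodp} proceeds; it cannot be shortcut by reading $\sigma$ off from $\Ord_{\tP}^{\tG}(\pi)$. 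Your auxiliary ingredients (the projection-type formula for a factor on which the unipotent radical acts trivially, and the pairwise nonisomorphism of the restrictions $\St_{Q^-}^{P(\sigma)^-}|_{[\tM_1,\tM_1]}$) are fine; it is only the value of the ordinary part of the generalized Steinberg along $\tP$ that is wrong, and it is precisely the load-bearing step.
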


\begin{proof}
This follows in the same way as the proof in $\S$III.24 of \cite{abehenniartherzigvigneras:irredmodp}, using the functor of ordinary parts.  
%Letting $\tV$ denote a weight of $I(\tP^-,\sigma,\tQ^-)$, and let $\varphi:\tV\longrightarrow I(\tP^-,\sigma,\tQ^-)|_{\tK}$ denote a $\cH_{\tG}(\tV)$-eigenvector with eigenvalues $\chi$.  By Proposition \ref{pichi}, we have $\Pi_M = \Pi(\chi) = \Pi_{M'}$, so that $\tP^- = \tP'^-$.  As in the proof of the previous proposition, if $\langle\Pi(\chi),\alpha^\vee\rangle = 0$, the condition that $\tT\cap M'_\alpha$ acts trivially on $\sigma$ (resp. $\sigma'$) is equivalent to $(\chi'\circ(\cS_{\tM}^{\tT})^{-1})(\tau_{\alpha^\vee}) = 1$.  Therefore, we obtain $\Pi(\sigma) = \Pi(\sigma')$ and consequently $\tP(\sigma)^- = \tP'(\sigma')^-$.  Taking the functor of ordinary parts with respect to $\tP(\sigma)^-$, we obtain ${}^e\sigma\otimes\St_{Q^-}^{P(\sigma)^-}\cong {}^e\sigma'\otimes\St_{Q'^-}^{P(\sigma)^-}$.  Since ${}^e\sigma$ (resp. ${}^e\sigma'$) is trivial on $[\tM_1,\tM_1]$, restricting ${}^e\sigma\otimes\St_{Q^-}^{P(\sigma)^-}$ (resp. ${}^e\sigma'\otimes\St_{Q'^-}^{P(\sigma)^-}$) to $[\tM_1,\tM_1]$ gives a direct sum of copies of (the irreducible representation) $\St_{Q^-}^{P(\sigma)^-}|_{[\tM_1,\tM_1]}$ (resp. $\St_{Q'^-}^{P(\sigma)^-}|_{[\tM_1,\tM_1]}$).  This restriction determines $Q^-$ and $Q'^-$, and we conclude that $Q^- = Q'^-$.  Finally, Lemma 5.3 of \cite{abe:irredmodp} shows that 
%$${}^e\sigma' \cong \Hom_{[\tM_1,\tM_1]}(\St_{Q^-}^{P(\sigma)^-},{}^e\sigma'\otimes\St_{Q^-}^{P(\sigma)^-}) \cong \Hom_{[\tM_1,\tM_1]}(\St_{Q^-}^{P(\sigma)^-},{}^e\sigma\otimes\St_{Q^-}^{P(\sigma)^-}) \cong {}^e\sigma,$$
%and therefore $\sigma'\cong\sigma$.  
\end{proof}

\begin{propn}\label{surj}
Let $\pi$ be an irreducible admissible genuine representation of $\tG$.  Then there exists a supersingular triple $(\tP^-,\sigma,\tQ^-)$ such that $\pi\cong I(\tP^-,\sigma,\tQ^-)$.  
\end{propn}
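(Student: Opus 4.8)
Corollary \ref{parabindjh} identifies the Jordan--H\"older constituents of $\Ind_{\tP^-}^{\tG}(\sigma)$, for $\sigma$ supersingular, with the (irreducible, by Proposition \ref{irred}) representations $I(\tP^-,\sigma,\tQ^-)$. So the plan is to reduce the statement to the following assertion:
$$(\star)\qquad \text{every irreducible admissible genuine }\pi\text{ is a subquotient of }\Ind_{\tP^-}^{\tG}(\sigma)\text{ for some standard }\tP=\tM\ltimes N^*\text{ and some supersingular }\sigma\text{ of }\tM.$$
Indeed, granting $(\star)$, the irreducibility of $\pi$ forces $\pi$ to be one of the composition factors of $\Ind_{\tP^-}^{\tG}(\sigma)$, hence $\pi\cong I(\tP^-,\sigma,\tQ^-)$ for a suitable $\tQ^-$ with $\tP^-\subset\tQ^-\subset\tP(\sigma)^-$, which is a supersingular triple. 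I would prove $(\star)$ by induction on the semisimple rank $|\Pi|$ of $\tG$. If $\pi$ is itself supersingular, one takes $\tP=\tG$, $\sigma=\pi$, and checks from the definitions that $\Pi(\sigma)=\emptyset$ (no simple coroot is orthogonal to all of $\Pi$ in type $C_n$), so $\tP(\sigma)=\tG$, $\St_{Q^-}^{P(\sigma)^-}$ is trivial, and $I(\tG,\pi,\tG)=\pi$; this settles the base case and the supersingular case in general.

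Now assume $\pi$ is not supersingular. Definition \ref{supersingular} yields a weight $\tV$ of $\tK$ and a character $\chi:\cH_{\tG}(\tV)\to\fpb$ with $\Pi(\chi)\subsetneq\Pi$ together with a nonzero, hence surjective, map $\chi\otimes_{\cH_{\tG}(\tV)}\ind_{\tK}^{\tG}(\tV)\twoheadrightarrow\pi$. Put $\tM:=\tM_{\Pi(\chi)}$, a proper standard Levi, and $\tP:=\tM\ltimes N^*$. Using Theorem \ref{thm:changeofwt} repeatedly I would replace $\tV=\tF(\nu)$ by $\tF(\nu+(q-1)\omega_{\alpha_i})$ for simple roots $\alpha_i\in\Pi_\nu\smallsetminus\Pi(\chi)$, which removes $\alpha_i$ from $\Pi_\nu$ while keeping $\chi$, $\Pi(\chi)$, and the surjection onto $\pi$ unchanged under the identification $\cH_{\tG}(\tV)\cong\cH_{\tG}(\tV')$, until $\tV$ is $\tM$-regular; the long root $\alpha_n$ can always be stripped because the $i=n$ case of Theorem \ref{thm:changeofwt} carries no side condition, and a short root $\alpha_i$ for which the side condition fails must be absorbed into the Levi exactly as in the reductive treatment (this is the mechanism behind the set $\Pi(\sigma)$, cf. Proposition \ref{irred}). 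Then $\chi=\chi'\circ\cS_{\tG}^{\tM}$ for a character $\chi'$ of $\cH_{\tM}(\tV_{(N^-\cap K)^*})$ with $\Pi_M(\chi')=\Pi_M$, and Proposition \ref{compisom} together with the exactness of parabolic induction identifies $\chi\otimes_{\cH_{\tG}(\tV)}\ind_{\tK}^{\tG}(\tV)$ with $\Ind_{\tP^-}^{\tG}(\tau)$, where $\tau:=\chi'\otimes_{\cH_{\tM}(\tV_{(N^-\cap K)^*})}\ind_{\tM\cap\tK}^{\tM}(\tV_{(N^-\cap K)^*})$. Thus $\pi$ is a quotient of $\Ind_{\tP^-}^{\tG}(\tau)$; applying $\Ord_{\tP}^{\tG}$, using its adjunction with $\Ind_{\tP^-}^{\tG}$ and the admissibility of $\Ord_{\tP}^{\tG}(\pi)$, I would extract (as in \cite{abehenniartherzigvigneras:irredmodp}) an irreducible admissible genuine representation $\rho$ of $\tM$ with $\pi$ a subquotient of $\Ind_{\tP^-}^{\tG}(\rho)$. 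Since $\tM$ is proper, the inductive hypothesis applies to $\rho$, and transitivity of parabolic induction plus exactness then give $(\star)$ for $\pi$.

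The main obstacle is the reduction carried out in the inductive step: in the reductive case it uses the \emph{full} change-of-weight criterion, and one must verify that the weaker metaplectic criterion of Theorem \ref{thm:changeofwt} still suffices. The key point is that the only short simple roots failing the hypothesis of Theorem \ref{thm:changeofwt} are precisely those that enter $\Pi(\sigma)$ and hence need not be stripped from $\Pi_\nu$, while the long root $\alpha_n$ — which can never lie in $\Pi(\sigma)$, since $\sigma$ is genuine and $M'_{\alpha_n}\cong\widetilde{\textnormal{SL}}_2(F)$ by Lemma \ref{malpha} — always falls under the unconditional $i=n$ case. A secondary point is the passage from the quotient $\Ind_{\tP^-}^{\tG}(\tau)\twoheadrightarrow\pi$ (with $\tau$ of possibly infinite length) to a genuine irreducible admissible $\rho$; this is handled by the formal properties of the functor of ordinary parts recalled in \S\ref{repns}, exactly as in the reductive case. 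Once this bookkeeping is in place, the rest of the argument is the reductive one verbatim, since $\St_{Q^-}^{P(\sigma)^-}$ is inflated from $G$ and all the required functorial input (Propositions \ref{satakeprop}, \ref{compisom}, \ref{compfactors} and the $\Ord$/$\Ind$ adjunction) has already been established in our setting.
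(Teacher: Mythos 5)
Your proposal follows the same architecture as the paper's proof (reduce to showing $\pi$ is a subquotient of a parabolic induction from a supersingular representation, conclude via Corollary \ref{parabindjh}; handle the supersingular case directly; otherwise strip roots from $\Pi_\nu$ by Theorem \ref{thm:changeofwt}, pass to a Levi via Proposition \ref{compisom}, extract an irreducible representation of the Levi and recurse), but two steps that the paper needs are missing. First, properness of the Levi you induce from is never established, and your bookkeeping around it is inconsistent. To invoke Proposition \ref{compisom} you need $\tV$ to be $\tM$-regular, so each short root $\alpha\in\Pi_\nu\smallsetminus\Pi(\chi)$ that cannot be stripped forces you to enlarge $\tM$ from $\tM_{\Pi(\chi)}$ to the Levi of $\Pi_\nu\cup\Pi(\chi)$. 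Your assertions that ``$\Pi_M(\chi')=\Pi_M$'' and that ``$\tM$ is proper'' are only valid for $\tM_{\Pi(\chi)}$: for the enlarged Levi the first is false (an absorbed root $\alpha$ has $(\chi\circ(\cS_{\tG}^{\tT})^{-1})(\tau_{\lambda_\alpha})\neq 0$, so it never enters $\Pi_M(\chi')$), and the second is exactly what must be proved, since if $\Pi_\nu\cup\Pi(\chi)=\Pi$ the inductive step produces nothing. The paper closes this with an argument you do not give: every unstrippable root is orthogonal to $\Pi(\chi)$, the long root $\alpha_n$ is always strippable (the unconditional $i=n$ case), and connectedness of the $C_n$ Dynkin diagram then shows $\Pi_\nu\cup\Pi(\chi)=\Pi$ would force $\Pi(\chi)=\emptyset$ and $\Pi_\nu=\Pi$, contradicting $\alpha_n\notin\Pi_\nu$. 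You state the two inputs but never draw this conclusion, and without it the induction does not get off the ground.

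Second, your induction does not literally close. The representation $\rho$ you extract lives on $\tM\cong\tH_1\times_{\mu_2}\tH_2$, where $\tH_1$ covers a product of general linear groups and $\tH_2\cong\widetilde{\textnormal{Sp}}_{2m}(F)$, whereas your statement $(\star)$ is formulated only for metaplectic symplectic groups; ``the inductive hypothesis applies to $\rho$'' is therefore not meaningful as stated. The paper handles this by writing $\rho\cong\sigma_1\boxtimes\sigma_2$, treating $\sigma_1$ via the $\chps$-twist dictionary of $\S$\ref{twistrep} together with the classification for $\textnormal{GL}_{n_i}(F)$ (Theorem 9.8 of \cite{herzig:modpgln}), and applying the induction on $n$ only to $\sigma_2$; this external GL input is indispensable and is not produced by induction on the rank of $\tG$. (The paper's base case $n=1$ is also settled differently, via irreducibility of the full principal series from Proposition \ref{irred}.) The remaining ingredients of your sketch --- the reduction via Corollary \ref{parabindjh}, the supersingular case with $\Pi(\sigma)=\emptyset$, the iterative change of weight, and the extraction of $\rho$ by Proposition \ref{compisom} and ordinary parts as in Lemma 9.9 of \cite{herzig:modpgln} and \cite{abehenniartherzigvigneras:irredmodp} --- do coincide with the paper's proof.
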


\begin{proof}
Assume first that for every weight $\tV$ of $\pi$ and every set of Hecke eigenvalues $\chi$, we have $\Pi(\chi) = \Pi$.  This implies that $\pi$ is supersingular and $\pi\cong I(\tG, \pi, \tG)$.  We may therefore assume that we are given $\tV = \tF(\nu)$ and $\chi$ for which $\Pi(\chi)\neq \Pi$, so that we have a surjective morphism 
$$\chi\otimes_{\cH_{\tG}(\tV)}\ind_{\tK}^{\tG}(\tV)\longtwoheadrightarrow \pi.$$
We write $\chi = \chi'\circ\cS_{\tG}^{\tM_{\Pi(\chi)}}$ for some character $\chi':\cH_{\tM_{\Pi(\chi)}}(\tV_{(N_{\Pi(\chi)}^-\cap K)^*})\longrightarrow \fpb$.  We may further assume that $|\Pi_\nu|$ is as small as possible.

Suppose $\alpha_i\in \Pi_\nu\smallsetminus\Pi(\chi)$, and set $\nu' = \nu + (q - 1)\omega_{\alpha_i}$.  We assume that either $i = n$, or that $1 \leq i \leq n - 1$ and one of the following holds: (1) $\langle\Pi(\chi),\alpha_i^\vee\rangle \neq 0$; (2) $(\chi'\circ(\cS_{\tM_{\Pi(\chi)}}^{\tT})^{-1})(\tau_{\alpha_i^\vee}) \neq 1$.  In both cases, Theorem \ref{thm:changeofwt} shows that we may replace $\Pi_{\nu}$ with $\Pi_{\nu'}$, which contradicts the minimality of $|\Pi_\nu|$.  Therefore, we have $\alpha_n\not\in\Pi_\nu\smallsetminus\Pi(\chi)$, and for every $\alpha\in \Pi_\nu\smallsetminus\Pi(\chi)$ we have $\langle\Pi(\chi),\alpha^\vee\rangle = 0$ and $(\chi'\circ(\cS_{\tM_{\Pi(\chi)}}^{\tT})^{-1})(\tau_{\alpha^\vee}) = 1$.

Assume that $\Pi_\nu \cup \Pi(\chi) = \Pi$.  By the above, we have $\langle\Pi(\chi),(\Pi\smallsetminus\Pi(\chi))^\vee\rangle = 0$.  Since the root system of $\textbf{Sp}_{2n}$ is irreducible, this implies $\Pi(\chi) = \emptyset$ and $\Pi_\nu = \Pi$.  However, the above paragraph shows $\alpha_n\not\in\Pi_\nu$, and we have a contradiction.  We may therefore assume $\Pi_\nu\cup \Pi(\chi) \neq \Pi$.

We now induct on $n$.  For $n = 1$, we have $\Pi_\nu = \Pi(\chi) = \emptyset$, and Proposition \ref{compisom} gives
$$\Ind_{\tB^-}^{\tG}(\xi)\stackrel{\sim}{\longrightarrow}\chi\otimes_{\cH_{\tG}(\tV)}\ind_{\tK}^{\tG}(\tV)\longtwoheadrightarrow \pi,$$
where $\xi$ is the genuine character of $\tT$ given by $\chi'\otimes_{\cH_{\tT}(\tV_{(U^-\cap K)^*})}\ind_{\tT\cap\tK}^{\tT}(\tV_{(U^-\cap K)^*})$.  By Proposition \ref{irred}, $\Ind_{\tB^-}^{\tG}(\xi) = I(\tB^-,\xi,\tB^-) $ is irreducible, and the above surjection is an isomorphism.

Assume now that $n > 1$, and let $\tP = \tM\ltimes N^{*}$ denote the standard parabolic subgroup associated to $\Pi_\nu\cup\Pi(\chi)$.  Note that $\tV$ is $\tM$-regular by construction.  Again applying Proposition \ref{compisom} gives
$$\Ind_{\tP^-}^{\tG}\left(\chi'\circ\cS_{\tM}^{\tM_{\Pi(\chi)}}\otimes_{\cH_{\tM}(\tV_{(N^-\cap K)^*})}\ind_{\tM\cap\tK}^{\tM}(\tV_{(N^-\cap K)^*})\right)\stackrel{\sim}{\longrightarrow}\chi\otimes_{\cH_{\tG}(\tV)}\ind_{\tK}^{\tG}(\tV)\longtwoheadrightarrow \pi.$$
As in Lemma 9.9 of \cite{herzig:modpgln}, this implies that there exists an irreducible admissible genuine representation $\sigma$ of $\tM$ and a $\tG$-equivariant surjection
$$\Ind_{\tP^-}^{\tG}(\sigma)\longtwoheadrightarrow \pi.$$

The assumption $\Pi_\nu\cup \Pi(\chi)\neq \Pi$ implies that the Levi subgroup $M$ has the form 
$$M \cong \textnormal{GL}_{n_1}(F)\times\cdots\times\textnormal{GL}_{n_r}(F)\times\textnormal{Sp}_{2m}(F)$$
with $r, m, n_i\geq1$ and $m + \sum_{i = 1}^r n_i = n$.  The preimage $\tM$ therefore takes the form $\tM \cong \tH_1\times_{\mu_2}\tH_2$, where $\tH_1$ is the preimage of $\textnormal{GL}_{n_1}(F)\times\cdots\times\textnormal{GL}_{n_r}(F)\subset M_\s$, and $\tH_2 \cong \widetilde{\textnormal{Sp}}_{2m}(F)$.  One sees easily that every irreducible admissible genuine representation of $\tM$ is of the form $\sigma_1\boxtimes \sigma_2$, where $\sigma_i$ is an irreducible admissible genuine representation of $\tH_i$.  By the discussion in $\S$\ref{twistrep} and Theorem 9.8 of \cite{herzig:modpgln}, $\sigma_1$ is a twist by $\chi_\psi$ of an irreducible admissible representation of $H_1 = \textnormal{GL}_{n_1}(F)\times\cdots\times\textnormal{GL}_{n_r}(F)$, and therefore has the desired form.  Likewise, by induction we get that $\sigma_2$ is of the desired form.  Therefore, there exists a parabolic subgroup $\tP_0^- = \tM_0\ltimes N_0^{-,*}$ and an irreducible admissible genuine supersingular representation $\sigma_0$ of $\tM_0$ such that $\pi$ is a subquotient of $\Ind_{\tP_0^-}^{\tG}(\sigma_0)$.  By Corollary \ref{parabindjh}, the result follows.
\end{proof}

We arrive at our main theorem:

\begin{thm}
\label{classn}
The map $(\tP^-,\sigma,\tQ^-)\longmapsto I(\tP^-,\sigma,\tQ^-)$ gives a bijection between equivalence classes of supersingular triples and isomorphism classes of irreducible admissible genuine representations of $\tG$.  
\end{thm}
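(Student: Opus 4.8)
The plan is to deduce the theorem by assembling the three propositions just established in this section, so the formal write-up at this stage is short. First I would check that the assignment is well-defined as a map from equivalence classes of supersingular triples to isomorphism classes of irreducible admissible genuine $\tG$-representations: by Proposition \ref{irred} each $I(\tP^-,\sigma,\tQ^-)$ is irreducible, admissible, and genuine, so it does land in the target set; and since $I(\tP^-,\sigma,\tQ^-)$ was built from the \emph{unique} genuine extension ${}^e\sigma$ of $\sigma$ to $\tM(\sigma)$ (together with the generalized Steinberg factor, which depends only on $\tQ^-$ and $\tP(\sigma)^-$), the construction depends only on $\tP^-$, $\tQ^-$, and the isomorphism class of $\sigma$. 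Hence it factors through the equivalence relation on triples.

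Next I would invoke Proposition \ref{inj} for injectivity: an isomorphism $I(\tP^-,\sigma,\tQ^-)\cong I(\tP'^-,\sigma',\tQ'^-)$ forces the two triples to be equivalent. Finally I would invoke Proposition \ref{surj} for surjectivity: every irreducible admissible genuine representation $\pi$ of $\tG$ arises as $I(\tP^-,\sigma,\tQ^-)$ for some supersingular triple. Combining well-definedness, injectivity, and surjectivity yields the asserted bijection.

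The substantive work is all in the supporting propositions rather than in this last assembly step. The hardest ingredient---and the one genuinely new in the metaplectic setting---is the change-of-weight criterion, Theorem \ref{thm:changeofwt}, which underlies both Proposition \ref{irred} (through Proposition \ref{pichi}) and the inductive argument of Proposition \ref{surj}; its weakened form here, as compared with the reductive case, is precisely what restricts the usable weight changes to those attached to short simple roots. Within the proof of the theorem itself, the one point I would take care to articulate is that the fibers of the map match the declared equivalence relation: it is not a priori clear that inequivalent $\sigma$, or different $\tQ^-$, cannot produce isomorphic induced representations, which is exactly why Proposition \ref{inj}---recovering $\tM(\sigma)$ and the extension ${}^e\sigma$, hence $\sigma$, and then $\tQ^-$ from the generalized Steinberg factor via the functor of ordinary parts---is indispensable. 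I expect no further obstacle beyond correctly citing these inputs.
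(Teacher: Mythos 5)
Your proposal is correct and matches the paper's proof, which likewise deduces the theorem directly by combining Propositions \ref{irred}, \ref{inj}, and \ref{surj} for well-definedness, injectivity, and surjectivity respectively. The additional remarks on where the substantive work lies are accurate but not needed for the assembly step itself.
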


\begin{proof}
This follows by collecting together Propositions \ref{irred}, \ref{inj}, and \ref{surj}.  
\end{proof}

\subsection{Corollaries}

We now explore several consequences of the above results.

Let $\tM$ be a standard Levi subgroup of $\tG$.  We say that a representation $\pi$ of $\tM$ is \textit{supercuspidal} if $\pi$ is irreducible, admissible, and genuine, and if $\pi$ is not isomorphic to a subquotient of a parabolic induction $\Ind_{\tQ^{-}}^{\tM}(\sigma)$ for any proper parabolic subgroup $\tQ$ of $\tM$ (not necessarily standard) and any irreducible admissible genuine representation $\sigma$ of the Levi factor of $\tQ$.

\begin{propn}\label{ssscG}
A representation $\pi$ of $\tG$ is supersingular with respect to $(\tK,\tT,\tB)$ if and only if it is supercuspidal.
\end{propn}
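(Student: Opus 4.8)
The plan is to read off both implications from the classification already in hand, namely Proposition \ref{surj} (every irreducible admissible genuine representation of $\tG$ is of the form $I(\tP^-,\sigma,\tQ^-)$), Corollary \ref{parabindjh} (the Jordan--H\"older constituents of $\Ind_{\tP^-}^{\tG}(\sigma)$ for $\sigma$ supersingular), and Proposition \ref{pichi} (which records that every set of Hecke eigenvalues occurring in $I(\tP^-,\sigma,\tQ^-)$ satisfies $\Pi(\chi)=\Pi_M$). First I would record the harmless reduction that, because every parabolic subgroup of $\tG$ is $\tG$-conjugate to a standard one and parabolic induction is unchanged up to isomorphism under conjugation, $\pi$ fails to be supercuspidal exactly when it is a subquotient of $\Ind_{\tP^-}^{\tG}(\tau)$ for some \emph{standard} $\tP=\tM\ltimes N^*$ with $\tM\neq\tG$ and some irreducible admissible genuine representation $\tau$ of $\tM$.

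For the implication ``supercuspidal $\Rightarrow$ supersingular,'' I would apply Proposition \ref{surj} to write $\pi\cong I(\tP^-,\sigma,\tQ^-)$ with $\tP=\tM\ltimes N^*$ and $\sigma$ supersingular on $\tM$. Corollary \ref{parabindjh} exhibits $\pi$ as a composition factor of $\Ind_{\tP^-}^{\tG}(\sigma)$, so if $\tM$ were a proper Levi subgroup this would contradict supercuspidality; hence $\tM=\tG$ and $\tP=\tG$. Because $\langle\alpha,\alpha^\vee\rangle=2\neq 0$ for every $\alpha\in\Pi$, the set $\Pi(\sigma)$ is empty, so $\tP(\sigma)=\tG$, the only admissible choice of $\tQ$ is $\tG$, and therefore $\pi=I(\tG,\sigma,\tG)\cong\sigma$ is supersingular by the definition of a supersingular triple.

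For the converse, ``supersingular $\Rightarrow$ supercuspidal,'' I would argue by contradiction: assume $\pi$ is supersingular with respect to $(\tK,\tT,\tB)$ but, by the reduction above, a subquotient of $\Ind_{\tP^-}^{\tG}(\tau)$ with $\tM\neq\tG$. By the argument in the proof of Proposition \ref{surj} (the analysis of an irreducible admissible genuine representation of $\tM$ as an exterior tensor product $\sigma_1\boxtimes\sigma_2$, together with \cite{herzig:modpgln} Theorem 9.8, the twist of $\S$\ref{twistrep}, and induction on $n$), $\tau$ is a subquotient of a parabolic induction from an irreducible admissible genuine supersingular representation $\sigma$ of a standard Levi subgroup $\tM_1\subseteq\tM$. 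By transitivity and exactness of parabolic induction, $\pi$ is then a subquotient of $\Ind_{\tR^-}^{\tG}(\sigma)$, where $\tR$ is the standard parabolic subgroup of $\tG$ with Levi factor $\tM_1$, so Corollary \ref{parabindjh} gives $\pi\cong I(\tR^-,\sigma,\tQ^-)$ for a suitable $\tQ$. By admissibility there is a weight $\tV$ of $\tK$ occurring in $\pi|_{\tK}$ carrying a nonzero $\cH_{\tG}(\tV)$-eigenvector, with eigenvalues $\chi$, and Proposition \ref{pichi} yields $\Pi(\chi)=\Pi_{M_1}$. But $\tM_1\subseteq\tM\subsetneq\tG$ forces $\Pi_{M_1}\subsetneq\Pi$, contradicting the supersingularity of $\pi$ in the sense of Definition \ref{supersingular} (with $\tM$ replaced by $\tG$). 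Hence $\pi$ is supercuspidal.

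I do not anticipate a genuine obstacle here: once Propositions \ref{surj} and \ref{pichi} and Corollary \ref{parabindjh} are available the argument is entirely formal. The only points requiring attention are the passage to standard parabolic subgroups, the transitivity of parabolic induction along the chain $\tM_1\subseteq\tM\subseteq\tG$, and the fact that the classification descends to the Levi subgroups $\tM$ of $\tG$ (which is built into the proof of Proposition \ref{surj}); none of these introduces anything special to the metaplectic setting, the cover-specific difficulties having already been absorbed into the cited results.
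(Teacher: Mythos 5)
Your proof is correct and takes essentially the same route as the paper: the paper's proof is just a citation of the argument of VI.2 Theorem of Abe--Henniart--Herzig--Vign\'eras, carried out with Theorem \ref{classn} and Corollary \ref{parabindjh}, and your write-up is exactly that argument unwound (Proposition \ref{surj} to write $\pi\cong I(\tP^-,\sigma,\tQ^-)$, Corollary \ref{parabindjh} for the composition factors of parabolic inductions from supersingulars, and Proposition \ref{pichi} to show $\Pi(\chi)=\Pi_M\subsetneq\Pi$ rules out supersingularity when the Levi is proper). The auxiliary points you flag --- conjugating to standard parabolics and the reduction of an arbitrary irreducible admissible genuine representation of a proper Levi to an induction from a supersingular one --- are indeed handled exactly as in the final paragraph of the paper's proof of Proposition \ref{surj}.
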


\begin{proof}
This follows in same way as VI.2 Theorem of \cite{abehenniartherzigvigneras:irredmodp}, using Theorem \ref{classn} and Corollary \ref{parabindjh}.
\end{proof}

\begin{cor}\label{sssc}
Let $\tM$ be a standard Levi subgroup of $\tG$. A representation $\sigma$ of $\tM$ is supersingular with respect to $(\tK \cap \tM, \tT, \tB \cap \tM)$ if and only if $\sigma$ is supercuspidal.  
\end{cor}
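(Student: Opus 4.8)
The plan is to deduce this from Proposition~\ref{ssscG} by exploiting the product structure of $\tM$. If $\tM=\tG$ the statement is Proposition~\ref{ssscG} itself, so suppose $\tM$ is a proper standard Levi subgroup. As recalled in the proof of Proposition~\ref{surj}, the Levi factor then has the shape
$$M\cong\textnormal{GL}_{n_1}(F)\times\cdots\times\textnormal{GL}_{n_r}(F)\times\textnormal{Sp}_{2m}(F),\qquad m+\sum_{i=1}^r n_i=n,$$
so that $\tM\cong\tH_1\times_{\mu_2}\tH_2$ with $\tH_1\subset\tM_\s$ the preimage of the linear part and $\tH_2\cong\widetilde{\textnormal{Sp}}_{2m}(F)$ ($m<n$), and every irreducible admissible genuine representation of $\tM$ has the form $\sigma\cong\sigma_1\boxtimes\sigma_2$ with $\sigma_i$ irreducible admissible genuine on $\tH_i$.

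First I would show that each of the two properties is ``multiplicative'' along this decomposition. For supercuspidality this is straightforward: every parabolic subgroup of $\tM$ is $\tM$-conjugate to one of the form $\tQ_1\times_{\mu_2}\tQ_2$ with $\tQ_i$ parabolic in $\tH_i$, parabolic induction respects the external tensor product, and over the algebraically closed field $\fpb$ the irreducible subquotients of $\pi_1\boxtimes\pi_2$ are exactly the external products of irreducible subquotients; hence $\sigma$ is supercuspidal if and only if both $\sigma_1$ and $\sigma_2$ are. For supersingularity I would use that a weight of $\tM$ factors as $\tV\cong\tV_1\boxtimes\tV_2$ and that $\tM\cap\tK$ decomposes compatibly, so that $\cH_{\tM}(\tV)\cong\cH_{\tH_1}(\tV_1)\otimes_{\fpb}\cH_{\tH_2}(\tV_2)$ compatibly with the Satake morphisms; a character $\chi$ of $\cH_\tM(\tV)$ is then a pair $(\chi_1,\chi_2)$, a nonzero map $\chi\otimes_{\cH_\tM(\tV)}\ind_{\tM\cap\tK}^{\tM}(\tV)\to\sigma$ is an external product of such maps for the factors, and because $\Pi_M=\Pi_{H_1}\sqcup\Pi_{H_2}$ with $\langle\Pi_{H_1},\Pi_{H_2}^\vee\rangle=0$ — so the auxiliary cocharacters $\lambda_\alpha$ of \eqref{defofpichi} may be taken inside the relevant factor — one obtains $\Pi_M(\chi)=\Pi_{H_1}(\chi_1)\sqcup\Pi_{H_2}(\chi_2)$. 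Thus $\sigma$ is supersingular if and only if $\sigma_1$ and $\sigma_2$ are supersingular with respect to the data coming from $\tK\cap\tH_1$ and $\tK\cap\tH_2$.

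Next I would treat the two factors separately. For the metaplectic factor $\tH_2\cong\widetilde{\textnormal{Sp}}_{2m}(F)$ with $m<n$, Proposition~\ref{ssscG} applied with $\widetilde{\textnormal{Sp}}_{2m}(F)$ in place of $\tG$ gives that $\sigma_2$ is supersingular if and only if it is supercuspidal. For the linear factor I would use the bijection $\tau\mapsto\tau\otimes\chps$ of $\S$\ref{twistrep} between isomorphism classes of irreducible admissible representations of $H_1$ and genuine ones of $\tH_1$, writing $\sigma_1=\tau_1\otimes\chps$: twisting by $\chps|_{\tH_1}$ commutes with parabolic induction, hence preserves supercuspidality, and, for a weight, it identifies the relevant Hecke algebras via $\varphi\mapsto\chps\cdot\varphi$ (as in Sublemma~\ref{sl2}), an isomorphism that multiplies the value of a character at each $\tau_{\lambda_\alpha}$ by the nonzero scalar $\chps(\widetilde{\lambda_\alpha}(\varpi))$ and therefore leaves $\Pi_{H_1}(\chi)$ — hence supersingularity — unchanged. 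Since $H_1$ is a product of general linear groups, ``supersingular if and only if supercuspidal'' holds for it by \cite{abehenniartherzigvigneras:irredmodp} (equivalently by \cite{herzig:modpgln}), and transporting through the twist gives the same equivalence for $\sigma_1$. Combining the multiplicativity of the previous paragraph with the two factor statements then yields the corollary.

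I expect the only real work to lie in the multiplicativity step: carefully checking that the Hecke algebras, the Satake morphisms, and the combinatorial invariant $\Pi_M(\chi)$ all respect the decomposition $\tM\cong\tH_1\times_{\mu_2}\tH_2$, and that the $\chps$-twist preserves $\Pi_{H_1}(\chi)$. Both are routine given the normalizations of $\S$\ref{twistrep} (in particular $\chps(\widetilde{\alpha}_j^\vee(x))=1$ for $1\le j\le n-1$), and they parallel the corresponding reductions used in the reductive case (cf.\ the proof of the Theorem in $\S$VI.2 of \cite{abehenniartherzigvigneras:irredmodp}).
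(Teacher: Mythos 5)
Your proposal is correct and follows essentially the same route as the paper: decompose $\tM\cong\tH_1\times_{\mu_2}\tH_2$ and $\sigma\cong(\sigma_1\otimes\chps)\boxtimes\sigma_2$, reduce both supersingularity and supercuspidality to the factors, handle the linear factor via the $\chps$-twist and the known reductive equivalence (VI.2 of \cite{abehenniartherzigvigneras:irredmodp}), and handle the metaplectic factor by Proposition~\ref{ssscG}. The multiplicativity and twist-invariance details you spell out are exactly what the paper's ``one easily sees'' is compressing, so there is no gap.
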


\begin{proof}
We have $\tM \cong \tH_1 \times_{\mu_2} \tH_2$ as in the final step of the proof of Proposition \ref{surj}, so that $H_1 \cong \textnormal{GL}_{n_1}(F)\times\cdots\times\textnormal{GL}_{n_r}(F)$ and $H_2\cong \textnormal{Sp}_{2m}(F)$ (where we now allow either one of $H_1$ or $H_2$ to be trivial). The representation $\sigma$ decomposes as
$$\sigma \cong (\sigma_1\otimes\chi_\psi)\boxtimes\sigma_2,$$
where $\sigma_1$ is an irreducible admissible representation of $H_1$ inflated to $\tH_1$, and $\sigma_2$ is an irreducible admissible genuine representation of $\tH_2$.  One easily sees that $\sigma$ is supersingular with respect to $(\tK \cap \tM, \tT, \tB \cap \tM)$ (resp. $\sigma$ is supercuspidal) if and only if $\sigma_1$ is supersingular with respect to $(K \cap H_1, T, B \cap H_1)$ and $\sigma_2$ is supersingular with respect to $(\tK \cap \tH_2, \tT, \tB \cap \tH_2)$ (resp. both $\sigma_1$ and $\sigma_2$ are supercuspidal).  The result then follows from Proposition \ref{ssscG} and VI.2 Theorem of \cite{abehenniartherzigvigneras:irredmodp}.  
\end{proof}

Next, we discuss the dependence of the notion of supersingularity on the choice of the triple $(\tK,\tT,\tB)$. Let $(\tK', \tT',\tB')$ denote any other such triple (with $\tK'$ the preimage of some hyperspecial maximal compact subgroup, $\tT'$ the preimage of a split maximal torus, etc.).  There exists some $\widetilde{\gamma} \in \widetilde{\textnormal{GSp}}_{2n}(F)$ such that 
$$(\tK',\tT',\tB') = (\widetilde{\gamma}\tK\widetilde{\gamma}^{-1}, \widetilde{\gamma}\tT\widetilde{\gamma}^{-1}, \widetilde{\gamma}\tB\widetilde{\gamma}^{-1}),$$ 
and it is clear that conjugation by $\widetilde{\gamma}$ induces a bijection $\tV \longmapsto \tV^{\widetilde{\gamma}}$ of weights as well as isomorphisms $\cH_{\tG}(\tV) \longrightarrow \cH_{\tG}(\tV^{\widetilde{\gamma}})$  (and likewise for Levi subgroups) which are compatible with Satake transforms on each side. One may then replace $(\tK,\tT,\tB)$ with $(\tK',\tT',\tB')$ in Definition \ref{supersingular} and obtain the following:

\begin{lemma}
\label{kindep}
A representation $\pi$ of $\tG$ is supersingular with respect to $(\tK, \tT, \tB)$ if and only if $\pi$ is supersingular with respect to $(\tK', \tT', \tB')$ as above.  If $\tK'$ is a conjugate of $\tK$ by an element of the maximal torus of $\widetilde{\textnormal{GSp}}_{2n}(F)$ containing $\tT$, then a representation $\sigma$ of a standard Levi subgroup $\tM$ of $\tG$ is supersingular with respect to $(\tK\cap \tM,\tT,\tB\cap \tM)$ if and only if it is supersingular with respect to $(\tK'\cap \tM,\tT,\tB\cap \tM)$.  Thus the results of Theorem \ref{classn} do not depend on the choice of such $\tK'$. 
\end{lemma}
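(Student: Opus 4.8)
The plan is to derive both assertions from the conjugation-compatibility already recorded in the paragraph preceding the lemma, together with the identification of supersingularity with supercuspidality (Proposition~\ref{ssscG} and Corollary~\ref{sssc}): the crucial feature is that supercuspidality refers to no choice of $(\tK,\tT,\tB)$ and is therefore preserved by every automorphism of $\tG$ or of a Levi subgroup, since such an automorphism permutes parabolic subgroups, permutes Levi subgroups, and commutes with parabolic induction.

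First I would record the dictionary. Fix $\widetilde\gamma\in\widetilde{\textnormal{GSp}}_{2n}(F)$ with $(\tK',\tT',\tB')=(\widetilde\gamma\tK\widetilde\gamma^{-1},\widetilde\gamma\tT\widetilde\gamma^{-1},\widetilde\gamma\tB\widetilde\gamma^{-1})$, as furnished before the lemma. Since $\textnormal{GSp}_{2n}(F)$ normalizes $\textnormal{Sp}_{2n}(F)$ and $\mu_2$ is central, conjugation $c_{\widetilde\gamma}$ is an automorphism of $\tG$ carrying the first triple to the second and fixing $\mu_2$ pointwise; pullback along $c_{\widetilde\gamma}^{-1}$, written $W\mapsto W^{\widetilde\gamma}$, is an exact autoequivalence of $\tG$-representations preserving irreducibility, admissibility, and genuineness. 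Unwinding definitions gives a canonical isomorphism $(\ind_{\tK}^{\tG}\tV)^{\widetilde\gamma}\cong\ind_{\tK'}^{\tG}(\tV^{\widetilde\gamma})$; the transported Hecke-algebra isomorphism $\cH_{\tG}(\tV)\cong\cH_{\tG}(\tV^{\widetilde\gamma})$ intertwines the two Satake transforms, whence $\Pi(\chi)$ corresponds under the bijection $\Pi\leftrightarrow\Pi'$ induced by $\widetilde\gamma$ to $\Pi'(\chi^{\widetilde\gamma})$, where $\chi^{\widetilde\gamma}$ is the transported character. As $W\mapsto W^{\widetilde\gamma}$ is an equivalence, nonzero maps $\chi\otimes_{\cH_{\tG}(\tV)}\ind_{\tK}^{\tG}(\tV)\to\pi$ correspond bijectively to nonzero maps $\chi^{\widetilde\gamma}\otimes_{\cH_{\tG}(\tV^{\widetilde\gamma})}\ind_{\tK'}^{\tG}(\tV^{\widetilde\gamma})\to\pi^{\widetilde\gamma}$. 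Comparing the two instances of Definition~\ref{supersingular} yields: $\pi$ is supersingular with respect to $(\tK,\tT,\tB)$ if and only if $\pi^{\widetilde\gamma}$ is supersingular with respect to $(\tK',\tT',\tB')$; and the identical reasoning applies with $\tG$ replaced by a Levi subgroup $\tM$ normalized by $\widetilde\gamma$ (with $\tT$ and $\tB\cap\tM$ likewise normalized), the triples being replaced by the corresponding ones for $\tM$.

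For the first assertion, apply the dictionary with $\widetilde\gamma^{-1}$ in place of $\widetilde\gamma$: $\pi$ is supersingular with respect to $(\tK',\tT',\tB')$ iff $\pi^{\widetilde\gamma^{-1}}$ is supersingular with respect to $(\tK,\tT,\tB)$, iff $\pi^{\widetilde\gamma^{-1}}$ is supercuspidal (Proposition~\ref{ssscG}), iff $\pi$ is supercuspidal (supercuspidality is $c_{\widetilde\gamma}$-invariant), iff $\pi$ is supersingular with respect to $(\tK,\tT,\tB)$ (Proposition~\ref{ssscG} again). For the second assertion, take $\widetilde\gamma=\widetilde\delta$ in the maximal torus of $\widetilde{\textnormal{GSp}}_{2n}(F)$ containing $\tT$, with $\tK'=\widetilde\delta\tK\widetilde\delta^{-1}$. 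As $\widetilde\delta$ lifts an element of $T_G$, it normalizes $T$ and each root subgroup $U_\alpha$, hence the standard Levi subgroup $M$, the Borel $B$, and $B\cap M$; so $c_{\widetilde\delta}$ restricts to an automorphism of $\tM$ carrying $(\tK\cap\tM,\tT,\tB\cap\tM)$ to $(\tK'\cap\tM,\tT,\tB\cap\tM)$, and the Levi form of the dictionary applies. Replacing Proposition~\ref{ssscG} by Corollary~\ref{sssc} and arguing exactly as above, $\sigma$ is supersingular with respect to $(\tK'\cap\tM,\tT,\tB\cap\tM)$ iff $\sigma^{\widetilde\delta^{-1}}$ is supersingular with respect to $(\tK\cap\tM,\tT,\tB\cap\tM)$ iff $\sigma^{\widetilde\delta^{-1}}$ is supercuspidal iff $\sigma$ is supercuspidal iff $\sigma$ is supersingular with respect to $(\tK\cap\tM,\tT,\tB\cap\tM)$. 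The final clause is then immediate: a supersingular triple and the representation $I(\tP^-,\sigma,\tQ^-)$ are built only from $T$, $B$, root data, the groups $M(\sigma)$ and $\Pi(\sigma)$, generalized Steinberg representations, and parabolic induction, none of which involves $\tK'$, together with the supersingularity of $\sigma$ on a Levi subgroup, which has just been shown to be independent of the choice of such $\tK'$.

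The one genuine difficulty, which the argument is arranged to circumvent, is that $c_{\widetilde\gamma}$ is in general an \emph{outer} automorphism of $\tG$ (only similitudes lying in $(F^\times)^2$ yield inner automorphisms), so $\pi^{\widetilde\gamma}$ need not be isomorphic to $\pi$ and one cannot conclude directly; passing through the conjugation-invariant notion of supercuspidality is precisely the fix. The rest is bookkeeping — tracking the directions of the conjugations and checking in the second assertion that $\widetilde\delta$ normalizes the standard Levi and Borel so that the Levi-level dictionary is legitimate.
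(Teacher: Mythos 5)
Your argument is correct, and for the first assertion it is essentially the paper's: both proofs funnel everything through Proposition~\ref{ssscG} and the observation that supercuspidality makes no reference to a choice of $(\tK,\tT,\tB)$ (the paper simply "applies Proposition~\ref{ssscG} twice," once for each triple, while you make the implicit transfer explicit by conjugating the representation by $\widetilde{\gamma}^{-1}$ and using invariance of supercuspidality under the automorphism $c_{\widetilde{\gamma}}$ --- a cosmetic difference). For the second assertion your route genuinely differs from the paper's. The paper decomposes $\sigma \cong (\sigma_1\otimes\chi_\psi)\boxtimes\sigma_2$ as in Corollary~\ref{sssc}, applies the first part of the lemma to the $\widetilde{\textnormal{Sp}}_{2m}$-factor $\sigma_2$, and quotes the analogous reductive-case statement (Abe, Corollary 5.13) for the $\textnormal{GL}$-factor $\sigma_1$. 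You instead rerun the conjugation-plus-supercuspidality argument directly at the level of $\tM$: since $\widetilde{\delta}$ lifts an element of the torus of $\widetilde{\textnormal{GSp}}_{2n}(F)$ it normalizes $\tT$, each root subgroup, $\tM$, and $\tB\cap\tM$, so $c_{\widetilde{\delta}}$ carries $(\tK\cap\tM,\tT,\tB\cap\tM)$ to $(\tK'\cap\tM,\tT,\tB\cap\tM)$, and Corollary~\ref{sssc} (supersingular $\Leftrightarrow$ supercuspidal on the Levi) together with invariance of supercuspidality under this automorphism of $\tM$ closes the loop. Both arguments are valid and non-circular (Corollary~\ref{sssc} precedes the lemma and does not use it); yours is more uniform in that it avoids the factor decomposition and the appeal to the reductive-case result, and it would apply verbatim to any $\widetilde{\gamma}$ normalizing $\tM$, $\tT$, and $\tB\cap\tM$, whereas the paper's version leans on the structure $\tM\cong\tH_1\times_{\mu_2}\tH_2$ and imports the $\textnormal{GL}_n$ statement already available in the literature. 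Your closing remark that the final clause of the lemma is immediate once Levi-level supersingularity is known to be independent of $\tK'$ matches the paper's intent.
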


\begin{proof}
The first part follows from applying Proposition \ref{ssscG} twice (for the triples $(\tK,\tT,\tB)$ and $(\tK',\tT',\tB')$, respectively), and noting that the notion of supercuspidality does not depend on the choice of triple $(\tK,\tT,\tB)$.  It therefore only remains to prove the second part.  We may decompose $\sigma$ as 
$$\sigma\cong(\sigma_1\otimes\chi_\psi)\boxtimes\sigma_2,$$ 
as in Corollary \ref{sssc}; the result then follows by applying the first part of the present lemma to $\sigma_2$, and applying the analogous result in the reductive case (cf. \cite{abe:irredmodp}, Corollary 5.13) to $\sigma_1$.
\end{proof}

Finally, we use Theorem \ref{classn} to describe representations of $\tG$ induced from the Siegel Levi.

\begin{lemma}
\label{msirred}
Let $\sigma$ be an irreducible admissible representation of $M_{\s}$. Then $\Ind_{\tP_{\s}^{-}}^{\tG}(\sigma\otimes\chi_\psi)$ is an irreducible admissible genuine representation of $\tG$, where $\sigma$ is viewed as a nongenuine representation of $\tM_\s$ by inflation.  
\end{lemma}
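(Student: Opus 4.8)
The plan is to deduce this from the classification theorem (Theorem \ref{classn}) together with the analogous irreducibility statement for $\textnormal{GL}_n(F)$, by showing that $\Ind_{\tP_{\s}^-}^{\tG}(\sigma \otimes \chi_\psi)$ coincides with $I(\tP^-, \tau, \tQ^-)$ for an appropriate supersingular triple. First I would recall that, by the classification of irreducible admissible representations of $\textnormal{GL}_n(F)$ (Herzig, Abe, or \cite{abehenniartherzigvigneras:irredmodp}), the representation $\sigma$ of $M_{\s} \cong \textnormal{GL}_n(F)$ can be written as $I(P^-_{\textnormal{GL}}, \rho, Q^-_{\textnormal{GL}})$ for a supersingular triple inside $\textnormal{GL}_n(F)$; equivalently, $\sigma$ is a generalized Steinberg-type representation built from a supercuspidal (= supersingular) representation $\rho$ of a standard Levi $L \subset M_{\s}$, inflated appropriately. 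Transporting this description across the twist $\tau \mapsto \tau \otimes \chi_\psi$ (which by \S\ref{twistrep} is a bijection onto genuine representations of $\tM_{\s}$), we get that $\sigma \otimes \chi_\psi$ is an analogous induced representation on the cover $\tM_{\s}$.

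Next I would identify the standard parabolic data: let $\tP$ be the standard parabolic of $\tG$ whose Levi is $\tL$ (the preimage of $L$), and let $\rho' := \rho \otimes \chi_\psi|_{\tL}$, which is genuine supersingular on $\tL$ by Corollary \ref{sssc}. The key point is to compute $\Pi(\rho')$: since $L \subset M_{\s}$ and all simple roots in $\Pi \smallsetminus \Pi_{\s}$ equal $\alpha_n$ or interact with $\alpha_n$, and since $\alpha_n \notin \Pi(\rho')$ always (Remark after Definition \ref{supersingular}, using genuineness), I expect that $\Pi_L \sqcup \Pi(\rho')$ is exactly $\Pi_{\s}$, so that $\tP(\rho')^- = \tP_{\s}^-$ and $\tM(\rho') = \tM_{\s}$. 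Granting this, the extension ${}^e\rho'$ to $\tM_{\s}$ agrees with $\sigma \otimes \chi_\psi$ up to twisting by the appropriate generalized Steinberg factor of $M_{\s}$, and by Corollary \ref{parabindjh} (applied within $\tM_{\s}$) together with the definition of $I(\tP^-, \rho', \tQ^-)$, we get $\Ind_{\tP_{\s}^-}^{\tG}(\sigma \otimes \chi_\psi) \cong I(\tP^-, \rho', \tQ^-)$ for a suitable $\tQ$ with $\tP^- \subset \tQ^- \subset \tP_{\s}^- = \tP(\rho')^-$. Since $(\tP^-, \rho', \tQ^-)$ is then a supersingular triple, Proposition \ref{irred} gives that this representation is irreducible, admissible, and genuine.

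The main obstacle I anticipate is the bookkeeping in the step identifying $\Pi(\rho')$ and matching the generalized Steinberg factors on $\tM_{\s}$ with those appearing in the $\textnormal{GL}_n$-classification of $\sigma$ — in other words, checking that the Steinberg-subquotient structure of $\Ind_{\tP^- \cap \tM_{\s}}^{\tM_{\s}}(\rho')$ (which is nongenuine after the $\chi_\psi$-twist is stripped, hence governed by the reductive theory via Herzig's Corollary 7.3) precisely reproduces the claimed $\sigma \otimes \chi_\psi$, and that no simple root outside $\Pi_{\s}$ can enter $\Pi(\rho')$. This is essentially a consistency check between the cover and its Siegel Levi: the crucial input is that $\chi_\psi$ restricted to the image of $\widetilde{\alpha}_i^\vee$ is trivial for $1 \le i \le n-1$ (noted in \S\ref{twistrep}), so the condition defining $\Pi(\rho')$ for short roots is literally the same as the reductive condition defining $\Pi(\sigma)$, while for $\alpha_n$ genuineness forces exclusion. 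Once this alignment is made precise, the rest is a direct appeal to Theorem \ref{classn}, Corollary \ref{parabindjh}, and Proposition \ref{irred}.
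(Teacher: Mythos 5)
Your overall strategy (invoke the $\textnormal{GL}_n$ classification of $\sigma$, transport it across the $\chi_\psi$-twist, and match the result against a supersingular triple so that Proposition \ref{irred} applies) is exactly the paper's, but the pivotal identification in your second paragraph is false: it is not true in general that $\Pi_L \sqcup \Pi(\rho') = \Pi_{\s}$, i.e.\ that $\tM(\rho') = \tM_{\s}$ and $\tP(\rho')^- = \tP_{\s}^-$. Genuineness only excludes $\alpha_n$, and together with $L \subset M_{\s}$ this gives the inclusion $\Pi_L \sqcup \Pi(\rho') \subseteq \Pi_{\s}$; equality would additionally require every short simple root outside $\Pi_L$ to be orthogonal to $\Pi_L$ and to have its coroot image act trivially on $\rho'$, which typically fails. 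Concretely, for $n = 2$ take $\sigma = \Ind_{B^- \cap M_\s}^{M_\s}(\chi_1 \otimes \chi_2)$ an irreducible principal series of $\textnormal{GL}_2(F)$ with $\chi_1 \neq \chi_2$: then $L = T$, the coroot $\alpha_1^\vee(x)$ acts by $\chi_1\chi_2^{-1}(x) \neq 1$, so $\Pi(\rho') = \emptyset$ and $\tM(\rho') = \tT \subsetneq \tM_{\s}$. In such cases $\rho'$ does not extend to $\tM_{\s}$ at all, so your step ``the extension ${}^e\rho'$ to $\tM_{\s}$ agrees with $\sigma \otimes \chi_\psi$ up to a Steinberg factor'' has no meaning, and the containment $\tQ^- \subset \tP(\rho')^-$ required of a supersingular triple is incompatible with your claim $\tP(\rho')^- = \tP_{\s}^-$. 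Note also that Corollary \ref{parabindjh} only locates the composition factors of $\Ind_{\tP^- \ltimes N_\s^{-,*}}^{\tG}(\rho')$; by itself it does not show that the particular subquotient $\Ind_{\tP_\s^-}^{\tG}(\sigma \otimes \chi_\psi)$ consists of a single factor.

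The repair is the paper's argument: use the $\textnormal{GL}_n$ classification in its induced form, $\sigma \cong \Ind_{P(\rho)_\s^-}^{M_\s}\bigl({}^e\rho \otimes \St_{Q^-}^{P(\rho)_\s^-}\bigr)$, where $P(\rho)_\s \subset M_\s$ is the parabolic attached to $\rho$ by the reductive theory (its Levi $M(\rho)_\s$ need not be $M_\s$). Twist by $\chi_\psi$ and apply transitivity of parabolic induction across $\tP_\s^-$ to obtain $\Ind_{\tP_\s^-}^{\tG}(\sigma \otimes \chi_\psi) \cong \Ind_{\tP(\rho)_\s^- \ltimes N_\s^{-,*}}^{\tG}\bigl({}^e\widetilde{\rho} \otimes \St_{Q^- \ltimes N_\s^-}^{P(\rho)_\s^- \ltimes N_\s^-}\bigr)$ with $\widetilde{\rho} := \rho \otimes \chi_\psi$. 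Then the point you correctly isolated — $\chi_\psi$ is trivial on the images of $\widetilde{\alpha}_i^\vee$ for $1 \leq i \leq n-1$, while genuineness forces $\alpha_n \notin \Pi(\widetilde{\rho})$ — shows $\Pi(\widetilde{\rho}) = \Pi(\rho)_\s$, i.e.\ $\tP(\widetilde{\rho})^- = \tP(\rho)_\s^- \ltimes N_\s^{-,*}$ (and not $\tP_\s^-$ in general). Hence the right-hand side is literally $I(\tP^- \ltimes N_\s^{-,*}, \widetilde{\rho}, \tQ^- \ltimes N_\s^{-,*})$, and Proposition \ref{irred} gives irreducibility, admissibility, and genuineness.
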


\begin{proof}
By the classification theorem for $\textnormal{GL}_n(F)$, there exists a standard parabolic subgroup $P \subset M_{\s}$ with Levi factor $M$, a supersingular representation $\rho$ of $M$, and a parabolic subgroup $Q$ with $P \subset Q \subset P(\rho)_\s$ such that 
$$\sigma \cong \Ind_{P(\rho)_\s^{-}}^{M_\s}\left({}^e\rho \otimes \St_{Q^{-}}^{P(\rho)_{\s}^{-}}\right).$$ 
Here $P(\rho)_{\s}$ is the standard parabolic subgroup of $M_\s$ associated to the subset of simple roots
$$\Pi(\rho)_\s := \left\{ \alpha \in \Pi_{\s}:  \begin{array}{l}\diamond~ \langle \Pi_{M}, \alpha^{\vee}\rangle = 0 \\ \diamond~ \alpha^\vee(x) \textnormal{ acts trivially on } \rho \textnormal{ for all }x\in F^\times\end{array} \right\},$$
and ${}^e\rho$ is an extension of $\rho$ to the Levi factor $M(\rho)_\s$ of $P(\rho)_\s$. Therefore 
$$\sigma\otimes\chi_\psi \cong \Ind_{P(\rho)_\s^{-}}^{M_\s}\left({}^e\rho \otimes \St_{Q^{-}}^{P(\rho)_\s^{-}}\right)\otimes\chi_{\psi} \cong \Ind_{\tP(\rho)_\s^{-}}^{\tM_\s}\left({}^e \rho\otimes\chi_{\psi} \otimes \St_{Q^{-}}^{P(\rho)_\s^{-}}\right).$$  
Inducing from $\tP_{\s}^{-}$ to $\tG$ and using transitivity of induction, we obtain
$$\Ind_{\tP_\s^{-}}^{\tG}(\sigma\otimes\chi_{\psi}) \cong \Ind_{\tP(\rho)_\s^{-}\ltimes N_\s^{-,*}}^{\tG}\left({}^e\rho\otimes\chi_{\psi} \otimes \St_{Q^{-}\ltimes N_\s^-}^{P(\rho)_\s^{-}\ltimes N_\s^-}\right).$$

Let $\widetilde{\rho} := \rho\otimes\chi_{\psi}$; then $\widetilde{\rho}$ is a supersingular representation of $\tM$ in the sense of Definition \ref{supersingular}, and ${}^e\rho\otimes\chi_{\psi} \cong {}^e(\rho\otimes\chi_{\psi}) \cong {}^e\widetilde{\rho}$ where the latter is an extension of $\widetilde{\rho}$ to $\tM(\rho)_\s$. Since the action of $\tT \cap M_{\alpha_n}'$ on $\widetilde{\rho}$ is nontrivial, $\Pi(\rho)_\s$ is equal to $\Pi(\widetilde{\rho})$ and so 
$$\Ind_{\tP_\s^{-}}^{\tG}(\sigma\otimes\chi_{\psi}) \cong \Ind_{\tP(\widetilde{\rho})^-}^{\tG}\left({}^e\widetilde{\rho} \otimes \St_{Q^{-}\ltimes N_\s^-}^{P(\widetilde{\rho})^{-}}\right) = I(\tP^{-}\ltimes N_\s^{-,*}, \widetilde{\rho},\tQ^{-}\ltimes N_\s^{-,*}).$$
\end{proof}

We also have the following description of genuine principal series representations of $\tG$:

\begin{cor}
\label{irredps}
Let $\sigma$ be a genuine character of $\tT$. 
\begin{enumerate}
\item The length of $\Ind_{\tB^{-}}^{\tG}(\sigma)$ is at most $2^{n-1}$, and is equal to $2^{n-1}$ if and only if $\sigma \circ \widetilde{\alpha}^{\vee}(x) = 1$ for every $x\in F^\times$ and every short root $\alpha\in \Pi$.\item The representation $\Ind_{\tB^{-}}^{\tG}(\sigma)$ is irreducible if and only if, for every short root $\alpha\in \Pi$, there exists $x_\alpha\in F^\times$ such that $\sigma \circ \widetilde{\alpha}^{\vee}(x_\alpha) \neq 1$. 
\item Let $\sigma'$ be a genuine character of $\tT$ such that $\sigma \neq \sigma'$. Then $\Ind_{\tB^{-}}^{\tG}(\sigma)$ and $\Ind_{\tB^{-}}^{\tG}(\sigma')$ are inequivalent. 
\end{enumerate}
\end{cor}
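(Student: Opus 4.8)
The plan is to read off all three statements directly from the classification theorem (Theorem~\ref{classn}) and Corollary~\ref{parabindjh}. The starting observation is that any genuine character $\sigma$ of $\tT$ is an irreducible admissible genuine \emph{supersingular} representation of the minimal Levi $\tT$: irreducibility and admissibility are automatic, and the supersingularity condition of Definition~\ref{supersingular} is vacuous when $\tM=\tT$, since $\Pi_T=\emptyset$ forces $\Pi_T(\chi)=\emptyset=\Pi_T$ for every $\chi$. Hence Corollary~\ref{parabindjh} applies with $\tP=\tB$, $\tM=\tT$. Moreover, exactly as in the proof of that corollary, $\Ind_{\tB^-}^{\tG}(\sigma)\cong\Ind_{\tP(\sigma)^-}^{\tG}\bigl({}^e\sigma\otimes\Ind_{\tB^-\cap\tM(\sigma)}^{\tM(\sigma)}(1)\bigr)$, and the generalized-Steinberg filtration of $\Ind_{B^-\cap M(\sigma)}^{M(\sigma)}(1)$ (\cite{herzig:modpgln}, Corollary~7.3) pushes forward, after twisting by ${}^e\sigma$ and applying the exact functor $\Ind_{\tP(\sigma)^-}^{\tG}$, to a filtration of $\Ind_{\tB^-}^{\tG}(\sigma)$ whose graded pieces are the representations $I(\tB^-,\sigma,\tQ^-)$, each occurring exactly once, as $\tQ^-$ runs over the standard parabolics with $\tB^-\subseteq\tQ^-\subseteq\tP(\sigma)^-$. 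By Proposition~\ref{irred} these are irreducible (and pairwise distinct by Proposition~\ref{inj}), so the length of $\Ind_{\tB^-}^{\tG}(\sigma)$ equals the number of such $\tQ^-$, namely $2^{|\Pi(\sigma)|}$.

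The next step is to identify $\Pi(\sigma)$ explicitly. By definition $\Pi(\sigma)=\{\alpha\in\Pi:\tT\cap M'_\alpha\text{ acts trivially on }\sigma\}$, the condition $\langle\Pi_T,\alpha^\vee\rangle=0$ being empty here. The remark following the definition of $\Pi(\sigma)$ (which uses Lemma~\ref{malpha} together with genuineness of $\sigma$) shows $\alpha_n\notin\Pi(\sigma)$, so $|\Pi(\sigma)|\leq n-1$; this already gives the bound in part~(1). For a short simple root $\alpha$, Lemma~\ref{malpha} gives $M'_\alpha\cong\textnormal{SL}_2(F)$ and $\pr$ restricts to an isomorphism of $M'_\alpha$ onto the $\textnormal{SL}_2$-subgroup of $G$ attached to $\alpha$, identifying $\tT\cap M'_\alpha$ with $\alpha^\vee(F^\times)$; hence $\{\widetilde{\alpha}^\vee(x):x\in F^\times\}=\tT\cap M'_\alpha$ (alternatively, one may invoke Remark~\ref{factorcoroot}), and since $\sigma$ is a character it is trivial on $\tT\cap M'_\alpha$ precisely when $\sigma\circ\widetilde{\alpha}^\vee(x)=1$ for all $x\in F^\times$. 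Thus $\Pi(\sigma)$ is exactly the set of short simple roots $\alpha$ with $\sigma\circ\widetilde{\alpha}^\vee\equiv1$.

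Parts~(1) and~(2) then follow at once: the length $2^{|\Pi(\sigma)|}$ attains its maximum $2^{n-1}$ if and only if $\Pi(\sigma)$ contains all $n-1$ short simple roots, i.e. $\sigma\circ\widetilde{\alpha}^\vee(x)=1$ for every short $\alpha\in\Pi$ and every $x\in F^\times$; and $\Ind_{\tB^-}^{\tG}(\sigma)$ is irreducible (length $1$) if and only if $\Pi(\sigma)=\emptyset$, i.e. every short simple root $\alpha$ admits some $x_\alpha\in F^\times$ with $\sigma\circ\widetilde{\alpha}^\vee(x_\alpha)\neq1$. For part~(3): if $\sigma\neq\sigma'$ as characters then $\sigma\not\cong\sigma'$, so no supersingular triple of the form $(\tB^-,\sigma,\tQ^-)$ is equivalent to one of the form $(\tB^-,\sigma',\tQ'^-)$; Proposition~\ref{inj} then shows the corresponding irreducible constituents are never isomorphic, so $\Ind_{\tB^-}^{\tG}(\sigma)$ and $\Ind_{\tB^-}^{\tG}(\sigma')$ have disjoint sets of composition factors, and in particular are inequivalent (and $\Hom$-orthogonal).

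I do not expect a serious obstacle here: the whole corollary rests on the already-established classification, and each step is bookkeeping. The one point worth emphasizing is the structural reason the criterion is genuinely weaker than in the reductive case, namely that the long simple root $\alpha_n$ can never belong to $\Pi(\sigma)$ because $M'_{\alpha_n}$ is a genuine copy of $\widetilde{\textnormal{SL}}_2(F)$ on which the genuine character $\sigma$ restricts nontrivially; this is exactly why the maximal length is $2^{n-1}$ rather than $2^n$ and why the restriction of $\sigma$ along $\widetilde{\alpha}_n^\vee$ is irrelevant to irreducibility.
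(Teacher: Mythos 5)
Your proposal is correct and follows essentially the same route as the paper: the paper's proof likewise deduces (1) and (2) from Corollary \ref{parabindjh} by counting the subsets of $\{\alpha\in\Pi_{\s}:\widetilde{\alpha}^{\vee}\text{ acts trivially on }\sigma\}$ (which is exactly your identification of $\Pi(\sigma)$ with the short simple roots killed by $\sigma$), and obtains (3) by comparing composition factors via Corollary \ref{parabindjh} together with the injectivity in Theorem \ref{classn}. Your write-up merely makes explicit the bookkeeping (vacuous supersingularity of $\sigma$, multiplicity one of each factor, exclusion of $\alpha_n$ by genuineness) that the paper leaves implicit.
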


\begin{proof} 
Corollary \ref{parabindjh} implies that the length of $\Ind_{\tB^{-}}^{\tG}(\sigma)$ is equal to the number of subsets of
$$\{\alpha \in \Pi_{\s}: \, \text{the image of $\widetilde{\alpha}^{\vee}$ acts trivially on $\sigma$}\},$$
giving (1) and (2) (alternatively, one may use Lemma \ref{msirred} and Theorem 8.7 of \cite{herzig:modpgln}). Part (3) follows from a comparison of the composition factors using Corollary \ref{parabindjh}, together with injectivity of the map in Theorem \ref{classn}. 
\end{proof}

\begin{remark}
Recall the genuine character $\chi_{\psi}$ of $\tM_{\s}$ from $\S$\ref{twistrep}, and denote by the same symbol the restriction of that character to $\tT$. The characters of $T$ and the genuine characters of $\tT$ are in bijection via $\xi \longmapsto \xi \otimes \chi_{\psi}$. For a fixed nontrivial additive $\bbC$-character $\psi$ of $F$, one can then express the criteria of Corollary \ref{irredps} in terms of $\xi$. In fact, since $\det_{M_\s}(\alpha_i^{\vee}(x)) = 1$ for each $1 \leq i \leq n-1$ and $x \in F^\times$, the value of $\chi_{\psi}$ on the image of $\widetilde{\alpha}_i^{\vee}$ is independent of $\psi$ for each $1 \leq i \leq n-1$. Therefore, the length of $\Ind_{\tB^{-}}^{\tG}(\xi \otimes \chi_{\psi})$ is independent of $\psi$. On the other hand, Corollary \ref{irredps}(3) shows that $\Ind_{\tB^{-}}^{\tG}(\xi \otimes \chi_{\psi})$ and $\Ind_{\tB^{-}}^{\tG}(\xi \otimes \chi_{\psi_a})$ are equivalent if and only if $\chi_{\psi} \circ \widetilde{\alpha}_n^{\vee}(x) = \chi_{\psi_{a}} \circ \widetilde{\alpha}_n^{\vee}(x)$ for every $x\in F^\times$, i.e., if and only if $a \in (F^\times)^2$. 
\end{remark}

\bibliographystyle{amsplain}
\bibliography{mp2n_mod_p}

\end{document}